\newcommand{\mb}{\mathbf}
\newcommand{\mc}{\mathcal}
\renewcommand{\Re}{\mathrm{Re}\,}
\newcommand{\rg}{\mathrm{rg}\,}
\newcommand{\N}{\mathbb{N}}
\newcommand{\R}{\mathbb{R}}
\newcommand{\C}{\mathbb{C}}
\newcommand{\Z}{\mathbb{Z}}
\newcommand \be{\begin{equation}}
\newcommand \ee{\end{equation}}
\newtheorem{lemma}{Lemma}[section]
\newtheorem{proposition}[lemma]{Proposition}
\newtheorem{theorem}[lemma]{Theorem}
\theoremstyle{remark}
\newtheorem{remark}[lemma]{Remark}
\theoremstyle{definition}
\numberwithin{equation}{section}
\DeclareSymbolFontAlphabet{\mathrsfs}{rsfs}
\DeclareMathAlphabet{\mathcal}{OMS}{cmsy}{m}{n}
\newcommand{\scri}{\mathrsfs{I}}
\title[Nondispersive decay]{Nondispersive decay for the cubic wave equation}
\author{Roland Donninger}
\address{\'Ecole Polytechnique F\'ed\'erale de Lausanne, 
Department of Mathematics, Station 8, CH-1015 Lausanne, Switzerland}
\email{roland.donninger@epfl.ch}
\author{An{\i}l Zengino\u{g}lu}
\address{Theoretical Astrophysics, M/C 350--17, California Institute of Technology, 
Pasadena, California 91125, USA}
\email{anil@caltech.edu} 
\thanks{The authors would like to thank the Erwin Schr\"odinger Institute for Mathematical
Physics (ESI) in Vienna for hospitality during the workshop ``Dynamics of General Relativity:
Black Holes and Asymptotics''
where this work was initiated. AZ is supported by the NSF grant PHY-106881 and by a Sherman Fairchild Foundation grant to Caltech.}
\begin{document}

\begin{abstract}
We consider the hyperboloidal initial value problem for the cubic focusing wave equation
\[ (-\partial_t^2+\Delta_x)v(t,x)+v(t,x)^3=0,\quad x\in \R^3. \]
Without symmetry assumptions, we prove the existence of a co-dimension 4 Lipschitz manifold of initial data that lead to global
solutions in forward time which do not scatter to free waves.
More precisely, for any $\delta\in (0,1)$,
we construct solutions with
the asymptotic behavior
\[ \|v-v_0\|_{L^4(t,2t)L^4(B_{(1-\delta)t})}\lesssim t^{-\frac12+} \]
as $t\to \infty$ where $v_0(t,x)=\frac{\sqrt 2}{t}$ and $B_{(1-\delta)t}:=\{x\in \R^3: |x|<(1-\delta)t\}$.
\end{abstract}

\maketitle 

\section{Introduction}

We consider the cubic
focusing wave equation
\begin{equation}
\label{eq:main} 
(-\partial_t^2+\Delta_x)v(t,x)+v(t,x)^3=0
\end{equation}
in three spatial dimensions.
Eq.~\eqref{eq:main} admits the conserved energy
\[ E(v(t,\cdot),v_t(t,\cdot))=\tfrac12 
\|(v(t,\cdot),v_t(t,\cdot))\|_{\dot H^1\times L^2(\R^3)}^2-\tfrac14 \|v(t,\cdot)\|_{L^4(\R^3)}^4\,, \]
and it is well-known that solutions with small $\dot H^1\times L^2(\R^3)$-norm
exist globally and scatter to zero \cite{Str81, MocMot85, MocMot87, Pec88}, 
whereas solutions with negative energy blow up 
in finite time \cite{Gla73, Lev74}.
There exists an explicit blowup solution $\tilde v_T(t,x)=\frac{\sqrt 2}{T-t}$,
which describes a stable blowup regime \cite{DonSch12}
and the blowup speed (but not the profile) of any blowup solution \cite{MerZaa05},
see also \cite{BizonChmajTabor04} for numerical work.
By the time translation and reflection symmetries of Eq.~\eqref{eq:main}
we obtain from $\tilde v_T$ the explicit solution $v_0(t,x)=\frac{\sqrt{2}}{t}$,
which is now global for $t\geq 1$ and decays in a nondispersive manner.
However, in the context of the standard Cauchy problem, 
where one prescribes data at $t=t_0$ for some $t_0$ and considers
the evolution for $t\geq t_0$, the role of $v_0$ for the study of global solutions
is unclear because $v_0$ has
infinite energy.
In the present paper we argue that this is not a defect of the solution $v_0$ but
rather a problem of the usual viewpoint concerning the Cauchy problem. 
Consequently, we study a different type of initial value problem for Eq.~\eqref{eq:main} 
where we prescribe data on a spacelike hyperboloid. 
In this formulation there exists a different ``energy'' which is finite for $v_0$.

Hyperboloidal initial value formulations have many advantages over the standard
Cauchy problem and are well-known
in numerical and mathematical relativity \cite{Eardley:1978tr, Fried83, Frau04, Zen07}.
However, in the mathematical literature on wave equations in flat spacetime, hyperboloidal
initial value formulations are less common (with notable exceptions such as \cite{Christo86}). 
We provide a thorough discussion of hyperboloidal methods in Section \ref{sec:geo}, where we argue 
that the hyperboloidal initial value problem is natural for hyperbolic equations in view of the underlying 
Minkowski geometry.

To state our main result, we consider a foliation of the future of
the forward null cone emanating from the origin by spacelike 
hyperboloids 
\[ \Sigma_T:=\left \{(t,x)\in \R\times \R^3: t=-\tfrac{1}{2T}+\sqrt{\tfrac{1}{4T^2}+|x|^2} \right \}\,, \]
where $T \in (-\infty,0)$. Each $\Sigma_T$ is parametrized by 
\[ \Phi_T: B_{|T|}\subset \R^3 \to \R^4,\quad 
\Phi_T(X)=\left (-\frac{T}{T^2-|X|^2},\frac{X}{T^2-|X|^2}\right )\,, \]
where $B_R:=\{X\in \R^3: |X|<R\}$ for $R>0$.
The ball $B_{|T|}$ shrinks in time as $T\to 0-$, but its image under $\Phi_T$ is 
an unbounded spacelike hypersurface in Minkowski space. 
The transformation $(T,X)\mapsto \Phi_T(X)$ has also been used by Christodoulou to study semilinear wave equations 
in \cite{Christo86} and is known as the Kelvin inversion \cite{Tao}. 
Note that in four-dimensional notation it can be
written as $X^\mu\mapsto -X^\mu/(X^\nu X_\nu)$ (up to a sign in the zero component). 
To illustrate the resulting initial value problem, 
we plot the spacelike hyperboloids $\Sigma_T$ for various values of 
$T\in (-\infty,0)$ in a spacetime diagram (left panel) and in a Penrose diagram (right panel) 
in Fig.~\ref{fig:Kelvin} along with a null surface emanating from the origin. 
In our formulation of the initial value problem we prescribe data on the hypersurface $\Sigma_{-1}$
and consider the future development.
We refer the reader to Section \ref{sec:geo} for a discussion 
on hyperboloidal foliations and their relation to wave equations.

\begin{figure}[ht]
\center
\includegraphics[width=0.7\textwidth]{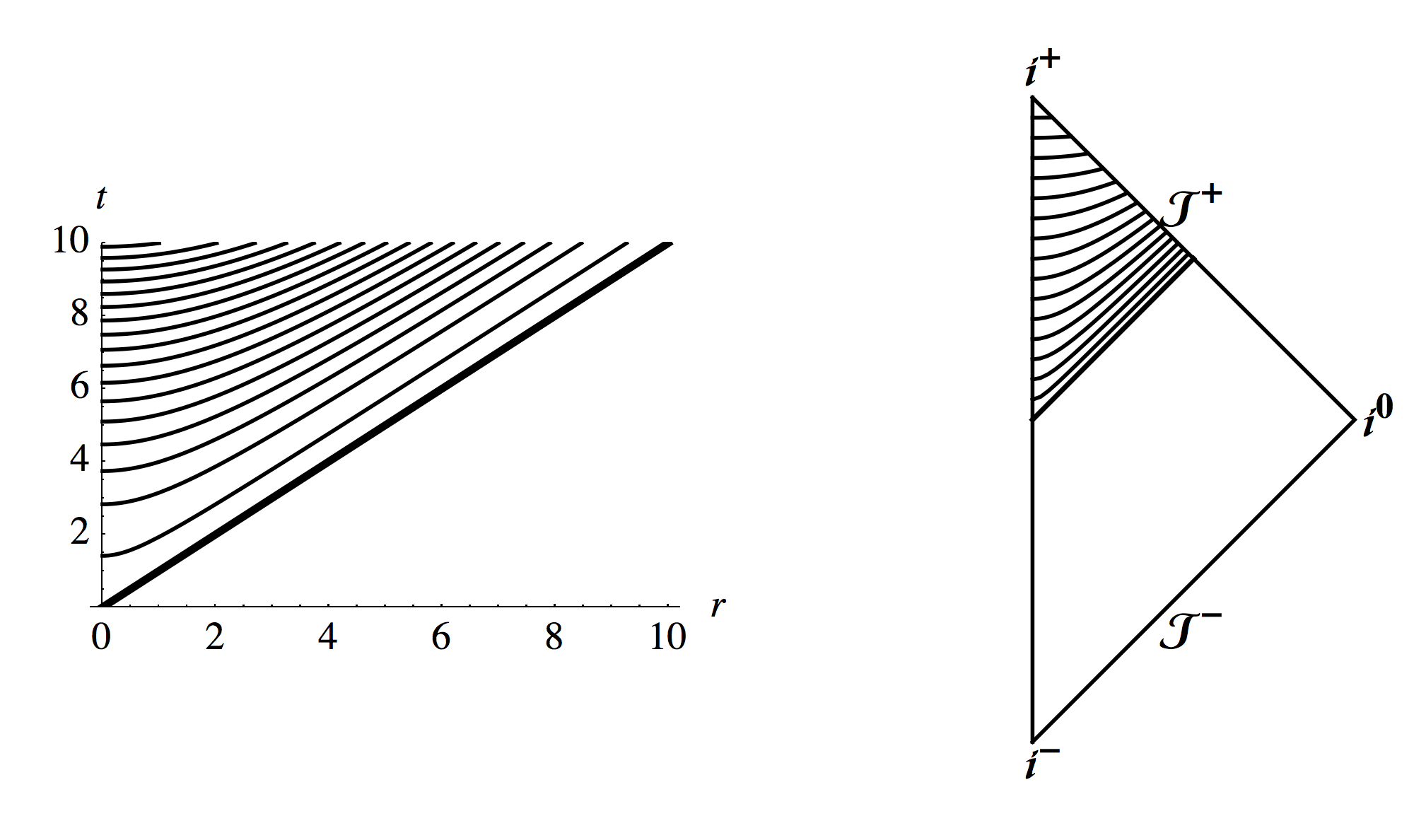}
\caption{The spacelike hyperboloids $\Sigma_T$ in a spacetime diagram (left panel) 
and a Penrose diagram (right panel) together with the null surface emanating from the 
origin (thick line with 45 degrees to the horizontal). Compare Fig.~\ref{fig:PenroseStandard}.
\label{fig:Kelvin}}
\end{figure}

We define a differential operator $\nabla_n$ by
\[ \frac{(\nabla_n v)\circ \Phi_T(X)}{T^2-|X|^2}=\partial_T \frac{(v \circ \Phi_T)(X)}{T^2-|X|^2}\,, \]
which one should think of as the normal derivative to the surface $\Sigma_T$ (although this
is not quite correct due to the additional factor $\frac{1}{T^2-|X|^2}$).
Explicitly, we have
\[ \nabla_n v(t,x)=(t^2+|x|^2)\partial_t v(t,x)+2tx^j\partial_j v(t,x)
+2tv(t,x). \]
On each leaf $\Sigma_T$ we define the norms
\begin{align} 
\|v\|_{L^2(\Sigma_T)}^2&:=\int_{B_{|T|}}
\left |\frac{v\circ \Phi_T(X)}{T^2-|X|^2} \right |^2 dX\,, \nonumber \\
\|v\|_{\dot H^1(\Sigma_T)}^2&:=\int_{B_{|T|}}\left |
\nabla_X \frac{v \circ \Phi_T(X)}{T^2-|X|^2}\right |^2 dX\,,
\end{align}
and we write $\|\cdot\|_{H^1(\Sigma_T)}^2=\|\cdot\|_{\dot H^1(\Sigma_T)}^2
+|T|^{-2}\|\cdot\|_{L^2(\Sigma_T)}^2$.
We emphasize that
\[ v_0 \circ \Phi_T(X)=\sqrt{2}\frac{T^2-|X|^2}{(-T)}\,, \]
and thus, $\|v_0\|_{H^1(\Sigma_T)}+\|\nabla_n v_0\|_{L^2(\Sigma_T)}\simeq |T|^{-\frac12}$.
Finally, for any subset $A\subset \R^4$ we denote 
its future domain of dependence by $D^+(A)$. 
With this notation at hand, we state our main result.

\begin{theorem}
\label{thm:main}
There exists a co-dimension 4 Lipschitz manifold $\mc M$ of functions in $H^1(\Sigma_{-1})\times 
L^2(\Sigma_{-1})$ with $(0,0)\in \mc M$ such that the following holds.
For data $(f,g) \in \mc M$ the hyperboloidal initial value problem
\[ \left \{
\begin{array}{l}
(-\partial_t^2 +\Delta_x) v(t,x)+v(t,x)^3=0 \\
v|_{\Sigma_{-1}}=v_0|_{\Sigma_{-1}}+f \\
\nabla_n v|_{\Sigma_{-1}}=\nabla_n v_0|_{\Sigma_{-1}}+g \end{array} \right. \]
has a unique solution $v$ defined on $D^+(\Sigma_{-1})$
such that 
\[ |T|^\frac12\left (\|v-v_0\|_{H^1(\Sigma_T)}+\|\nabla_n v-\nabla_n v_0\|_{L^2(\Sigma_T)}\right )
\lesssim
|T|^{\frac12-} \]
for all $T \in [-1,0)$.
As a consequence, for any $\delta \in (0,1)$, we have 
\begin{align*} 
\|v-v_0\|_{L^4(t,2t)L^4(B_{(1-\delta)t})}&\lesssim t^{-\frac12+}  
\end{align*}
as $t \to \infty$, i.e., $v$ converges to $v_0$ in a localized Strichartz sense.
\end{theorem}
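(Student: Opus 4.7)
The plan is to recast the problem in self-similar hyperboloidal coordinates in which $v_0$ becomes a stationary solution, and then to build a stable manifold around this equilibrium. Specifically, I would introduce $\tau=-\log|T|\in[0,\infty)$, $Y=X/|T|\in \B_1$, and the rescaled unknown $\phi(\tau,Y)=|T|\,(v\circ\Phi_T)(|T|Y)/(T^2-|T|^2|Y|^2)$; then $v_0$ corresponds to the constant solution $\phi_0\equiv\sqrt 2$, and the conformal covariance of the Kelvin inversion turns the cubic wave equation into an autonomous semilinear wave equation for $\phi$ on the fixed unit ball $\B_1$. Writing $\Phi=(\phi-\sqrt 2,\partial_\tau\phi)$, the problem takes the form $\partial_\tau\Phi=\mc L\Phi+\mc N(\Phi)$ on a Hilbert space $\mc H$ which, by a direct change of variables, is canonically identified with $H^1(\Sigma_{-1})\times L^2(\Sigma_{-1})$; here $\mc L$ is the linearization at $\phi_0$ (whose potential is $3\phi_0^2=6$) and $\mc N$ vanishes to second order at the origin.

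The heart of the proof is a spectral analysis of $\mc L$. I expect $\mc L$ to generate a strongly continuous semigroup $(e^{\tau\mc L})_{\tau\geq 0}$ on $\mc H$ and, by a compact-perturbation argument, to have only finitely many eigenvalues with $\Re\lambda>-\tfrac12+$, each of finite algebraic multiplicity. Four of these eigenvalues should lie in the open right half-plane and correspond to the four one-parameter families of Minkowski symmetries that break the profile $v_0$, namely the time translation $t\mapsto t+s$ and the three Lorentz boosts (spatial translations and rotations leave $v_0$ invariant and contribute nothing). The associated eigenmodes are obtained explicitly by differentiating $v_0$ along these symmetries. Ruling out further unstable eigenvalues is carried out by an ODE analysis after decomposition into spherical harmonics, with careful control at the characteristic boundary $|Y|=1$.

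Once the spectral picture is established, a resolvent bound yields the decay $\|e^{\tau\mc L}P_s\|\lesssim e^{-(\tfrac12-)\tau}$ on the stable subspace $\mc H_s$ (the spectral complement of the $4$-dimensional unstable subspace). A standard Lyapunov--Perron fixed-point argument, in which the unstable initial data are chosen as a Lipschitz graph over the stable ones so as to kill the exponentially growing contributions in the Duhamel formula, then produces the advertised codimension-$4$ Lipschitz manifold $\mc M\subset \mc H$ along which $\|\Phi(\tau)\|_{\mc H}\lesssim e^{-(\tfrac12-)\tau}$. The nonlinear part $\mc N$ is locally Lipschitz on $\mc H$ via the Sobolev embedding $H^1(\B_1)\hookrightarrow L^6(\B_1)$, so this step is routine once the linear estimate is in hand. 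Undoing the rescaling translates $e^{-(\tfrac12-)\tau}$-decay in $\tau$ into the bound $|T|^{1/2}(\|v-v_0\|_{H^1(\Sigma_T)}+\|\nabla_n v-\nabla_n v_0\|_{L^2(\Sigma_T)})\lesssim|T|^{1/2-}$. The Strichartz-type consequence follows from the slice-wise Sobolev embedding into $L^4$ together with a change-of-variables computation, using that the cylinder $(t,2t)\times B_{(1-\delta)t}$ is foliated by hyperboloids $\Sigma_T$ with $|T|\simeq 1/t$ and $\delta$-dependent Jacobian bounds.

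The main obstacle is the spectral step. The operator $\mc L$ is a wave-type operator on $\B_1$ with characteristic boundary at $|Y|=1$; it is neither self-adjoint nor sectorial, so standard spectral theory is unavailable and eigenvalues on the line $\Re\lambda=0$ cannot be excluded by compactness. Controlling the spectrum in the strip $\{-\tfrac12+<\Re\lambda\leq 0\}$ requires an explicit mode-by-mode ODE analysis in which the regular-singular behavior at the light cone must be matched against a boundary condition dictated by the finite-energy structure of $\mc H$; together with the explicit identification of the four symmetry-induced unstable modes, this is the delicate part of the argument. Once the spectral picture is in place, the semigroup estimates and the nonlinear construction follow the now-standard template of the stable-blowup analysis \cite{DonSch12}, adapted here to the ``stable decay'' setting obtained by time reversal.
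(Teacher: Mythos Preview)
Your proposal is correct and follows essentially the same route as the paper: similarity coordinates on the ball, first-order semigroup formulation, compact-perturbation spectral analysis via spherical harmonics reducing to a hypergeometric ODE, Gearhart--Pr\"uss on the stable subspace, and a Lyapunov--Perron fixed point for the nonlinear problem. One small correction: the four symmetry modes do not all lie in the open right half-plane---the time-translation mode gives the eigenvalue $1$, but the three Lorentz-boost modes give the eigenvalue $0$ (threefold), so the spectral projection must remove $\{0,1\}$ rather than four strictly unstable eigenvalues; also, the paper takes as second variable $\phi_2=\partial_\tau\phi+\xi^j\partial_j\phi+\phi$ rather than $\partial_\tau\phi$, which is natural since it corresponds to $\partial_T u$ and makes the dissipativity estimate for the free generator cleaner.
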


Some remarks are in order.
\begin{itemize}
\item As usual, by a ``solution'' we mean a function which solves the equation in an
appropriate weak sense, not necessarily in the sense of classical derivatives.

\item The manifold $\mc M$ can be represented as a graph of a Lipschitz function.
More precisely, let $\mc H:=H^1(\Sigma_{-1})\times 
L^2(\Sigma_{-1})$ and denote by $\mc B_R(0)$
the open ball of radius $R>0$ around $0$ in $\mc H$.
We prove that there exists a decomposition 
$\mc H=\mc H_1 \oplus \mc H_2$ with $\dim \mc H_2=4$
and a function
$F: \mc H_1\cap \mc B_{\delta}(0) \to \mc H_2$ such that 
$\mc M=\{\vec u+F(\vec u): \vec u\in \mc H_1 \cap \mc B_{\delta}(0)\}$ provided
$\delta>0$ is chosen sufficiently small.
Furthermore, $F$ satisfies
\[ \|F(\vec u)-F(\vec v)\|_{\mc H}\lesssim \delta^\frac12 \|\vec u-\vec v\|_{\mc H} \]
for all $\vec u,\vec v \in \mc H_1\cap \mc B_{\delta}(0)$ and $F(\vec 0)=\vec 0$.
\item The reason for the co-dimension 4 instability of the attractor $v_0$
is the invariance of Eq.~\eqref{eq:main} under time translations and Lorentz transforms
(combined with the Kelvin inversion).
The Lorentz boosts do not destroy the nondispersive character of
the solution $v_0$ whereas the time translation does,
see the beginning of Section \ref{sec:lin} below for a more detailed discussion.
In this sense, one may say that there exists a co-dimension \emph{one} manifold of data that 
lead to nondispersive solutions. 
However, if one fixes $v_0$, as we have done in our formulation, there are 4 unstable directions.
\end{itemize}

There was tremendous recent progress in the understanding of universal properties
of global solutions to nonlinear wave equations, in particular in the energy critical case,
see e.g.~\cite{DuyKenMer12a, DuyKenMer12b, CotKenLawSch12, KenLawSch13}.
A guiding principle for all these studies is the soliton resolution conjecture, i.e., the idea 
that global solutions to nonlinear dispersive equations decouple into solitons
plus radiation as time tends to infinity. 
It is known that, in such a strict sense, soliton resolution does not hold in most cases.
One possible obstacle is the existence of global solutions which do not scatter.
Recently, the first author and Krieger constructed nonscattering solutions for the energy critical
focusing wave equation \cite{DonKri13}, see also \cite{OrtPer12} for similar
results in the context of the nonlinear Schr\"odinger equation.
These solutions are obtained by considering a rescaled ground state soliton,
the existence of which is typical for critical dispersive equations.
The cubic wave equation under consideration is energy subcritical and does not admit
solitons.
Consequently, our result is of a completely different nature.
Instead of considering moving solitons, 
we obtain the nonscattering solutions by perturbing the \emph{self-similar} solution 
$v_0(t,x)=\frac{\sqrt 2}{t}$.
This can only be done in the framework of a hyperboloidal initial value formulation
because the standard energy for the self-similar solution $v_0$ is infinite.

Another novel feature of our result is a precise description of the data
which lead to solutions that converge to $v_0$: They lie on a Lipschitz manifold
of co-dimension 4.
In this respect we believe that our result is also interesting from the perspective
of infinite-dimensional dynamical systems theory for wave equations, 
which is currently a very active field, see e.g.~\cite{KriNakSch10, KriNakSch11, KriNakSch12}.

Finally, we mention that the present work is motivated by numerical 
investigations undertaken by Bizo\'n and the second author \cite{BizZen09}. 
In particular, the conformal symmetry for the cubic wave equation 
has been used in \cite{BizZen09} to translate the (linear) stability analysis
for blowup to asymptotic results for decay.
We exploit this idea in a similar way: 
If $v$ solves Eq.~\eqref{eq:main} then $u$, defined by
\[ u(T,X)=\frac{1}{T^2-|X|^2}v\left (-\frac{T}{T^2-|X|^2},\frac{X}{T^2-|X|^2}\right )
= \frac{v \circ \Phi_T(X)}{T^2-|X|^2} , \]
solves $(-\partial_T^2+\Delta_X)u(T,X)+u(T,X)^3=0$.
The point is that the coordinate transformation $(t,x)\mapsto (T,X)$ with
\[ T=-\frac{t}{t^2-|x|^2},\quad X=\frac{x}{t^2-|x|^2} \]
maps the forward lightcone $\{(t,x): |x|< t, t>0\}$ to the backward lightcone 
$\{(T,X): |X|< -T, T<0\}$ and $t \to \infty$ translates into $T\to 0-$ 
(see Fig.~\ref{fig:Kelvin}). Moreover, 
\[ \tfrac{1}{T^2-|X|^2}v_0 \left (-\tfrac{T}{T^2-|X|^2},
\tfrac{X}{T^2-|X|^2}\right )=\tfrac{\sqrt 2}{(-T)}=:u_0(T,X) \]
and thus,
we are 
led to the study of the stability of the self-similar blowup solution
$u_0$ in the backward lightcone of the origin.
In the context of radial symmetry, this problem was recently addressed by the first author
and Sch\"orkhuber \cite{DonSch12}, see also \cite{Don11, Don12, DonSch12b}
for similar results in the context of wave maps, Yang-Mills equations, and supercritical
wave equations.
However, in the present paper we do not assume any symmetry of the data and hence, we develop
a stability theory similar to \cite{DonSch12} but beyond the radial context.
Furthermore, the instabilities of $u_0$ have a different interpretation in the current setting
and lead to the co-dimension 4 condition in Theorem \ref{thm:main} whereas the blowup
studied in \cite{DonSch12} is stable.
The conformal symmetry, although convenient, does not seem crucial for
our argument. It appears that one can employ similar techniques to study nondispersive solutions
for semilinear wave equations
$(-\partial_t^2+\Delta_x)v(t,x)+v(t,x)|v(t,x)|^{p-1}=0$ with more general $p>3$.

\subsection{Notations}
The arguments for functions defined on Minkowski space are numbered by $0,1,2,3$
and we write $\partial_\mu$, $\mu \in \{0,1,2,3\}$, for the respective derivatives.
Our sign convention for the Minkowski metric $\eta$ is $(-,+,+,+)$.
We use the notation $\partial_y$ for the derivative with respect to the variable $y$.
We employ Einstein's summation convention throughout with latin indices running from $1$
to $3$ and greek indices running from $0$ to $3$, unless otherwise stated. We denote by $\R^+_0$ the set of positive real numbers including $0$.

The letter $C$ (possibly with indices to indicate dependencies) denotes a generic
positive constant which may have a different value at each occurrence.  
The symbol $a\lesssim b$ means $a\leq Cb$ and we abbreviate $a\lesssim b\lesssim a$
by $a\simeq b$. We write $f(x)\sim g(x)$ for $x\to a$ if $\lim_{x\to a}\frac{f(x)}{g(x)}=1$.

For a closed linear operator $\mb L$ on a Banach space we denote its domain by $\mc D(\mb L)$, its spectrum
by $\sigma(\mb L)$, and its point spectrum by $\sigma_p(\mb L)$.
We write $\mb R_{\mb L}(z):=(z-\mb L)^{-1}$ for $z\in \rho(\mb L)=\C
\backslash \sigma(\mb L)$.
The space of bounded operators on a Banach space $\mc X$ is denoted by $\mc B(\mc X)$.

\section{Wave equations and geometry}
\label{sec:geo}
In this section, we present the motivation for using hyperboloidal coordinates in our 
analysis and provide some background. We discuss the main arguments and tools in a 
pedagogical manner to emphasize the relation between spacetime geometry and wave equations 
for readers not familiar with relativistic terminology.
\subsection{Geometric Preliminaries}
A spacetime $(\mathcal{M},g)$ is a four-dimensional paracompact Hausdorff 
manifold $\mathcal{M}$ with a time-oriented Lorentzian metric $g$. 
The cubic wave equation \eqref{eq:main} is posed on the Minkowski 
spacetime $(\mathbb{R}^4,\eta)$. In standard time $t$ and Cartesian coordinates $(x,y,z)$ 
the Minkowski metric reads
\[ \eta = -dt^2 + dx^2 + dy^2 + dz^2\,, \qquad (t,x,y,z)\in \mathbb{R}^4\,.  \]

Minkowski spacetime is \emph{spherically symmetric}, i.e., 
the group $SO(3)$ acts non-trivially by isometry on $(\mathbb{R}^4,\eta)$.
We introduce the quotient space $\mathcal{Q}=\mathbb{R}^4/SO(3)$, and the area 
radius $r:\mathcal{Q}\to\mathbb{R}$ such that the group orbits of points $p\in \mathcal{Q}$ 
have area $4\pi r^2( p )$. The area radius can be written as $r=\sqrt{x^2+y^2+z^2}$ with 
respect to Cartesian coordinates. The flat metric can then be written 
as $\eta = q + r^2 d\sigma^2$, where $q$ is a rank 2 Lorentzian metric 
and $d\sigma^2$ is the standard metric on $S^2$. Choosing the usual angular variables 
for $d\sigma^2$, we obtain the familiar form of the flat spacetime metric in spherical 
coordinates  
\[  \eta = -dt^2 + dr^2 + r^2\left(d\theta^2+\sin^2\theta d\phi^2\right)\,,\quad (t,r,\theta,\phi) \in 
\mathbb{R} \times \mathbb{R}_+ \times [0,\pi] \times [0,2\pi). \]
 A codimension one submanifold is called a \emph{hypersurface} and 
a \emph{foliation} is a one-parameter family of non-intersecting spacelike hypersurfaces. 
A foliation can also be defined by a 
time function from $\mathcal{M}$ to the real line $\mathbb{R}$, whose level sets 
are the hypersurfaces of the foliation. 

We can restrict our discussion of the interaction between hyperbolic equations and spacetime geometry to spherical symmetry without loss of generality because the radial direction is sufficient for exploiting the Lorentzian structure. Working in the two-dimensional quotient spacetime $(\mathcal{Q},q)$ also allows us to illustrate the geometric definitions in two-dimensional plots. 

\subsection{Compactification and Penrose diagrams}

It is useful to introduce Penrose diagrams to depict global features of time foliations in spherically symmetric spacetimes. Penrose presented the construction of the diagrams in his study of the asymptotic behavior of gravitational fields in 1963 \cite{Penrose63}. A beautiful exposition of Penrose diagrams has been given in \cite{Dafermos05}. As we are working in Minkowski spacetime only, the main features of Penrose diagrams of interest to us is the compactification and the preservation of the causal structure. See, for example,
\cite{Christo86, KeelTao} for the application of Penrose compactification to study wave
equations in flat spacetime

The image of the Penrose diagram is a 2-dimensional Minkowski spacetime with a \emph{bounded} global null coordinate system.  Causal concepts extend through the boundary of the map. Consider the rank-2 Minkowski metric $q$ on the quotient manifold $\mathcal{Q}$
\be\label{intro:eta}q=-dt^2+dr^2, \qquad   (t,r) \in \mathbb{R}\times\R^+_0 \,. \ee
To map this metric to a global, bounded, null coordinate system, define $u=t-r$ and  $v=t+r$ for $v \geq u$, and compactify by $U= \arctan u$ and $V=\arctan v$. The quotient metric becomes
\[q=-\frac{1}{\cos^2 V \cos^2 U} dV\,dU, \qquad 
(-\pi/2 < U \leq V < \pi/2). \]
Points at infinity with respect to the original coordinates have finite values with respect to the compactifying coordinates. The singular behavior of the metric in compactifying coordinates at the boundary can be compensated by a conformal rescaling with the conformal factor 
$\Omega= \cos V \cos U$, so that the rescaled metric
\[ \bar{q} = \Omega^2 q= -dU\,dV, \]
is well defined on the domain $(-\pi/2 \leq U \leq V \leq \pi/2)$ including 
points that are at infinity with respect to $q$. 
We say that $q$ can be conformally extended beyond infinity.

\begin{figure}[ht]
\center
\includegraphics[width=0.7\textwidth]{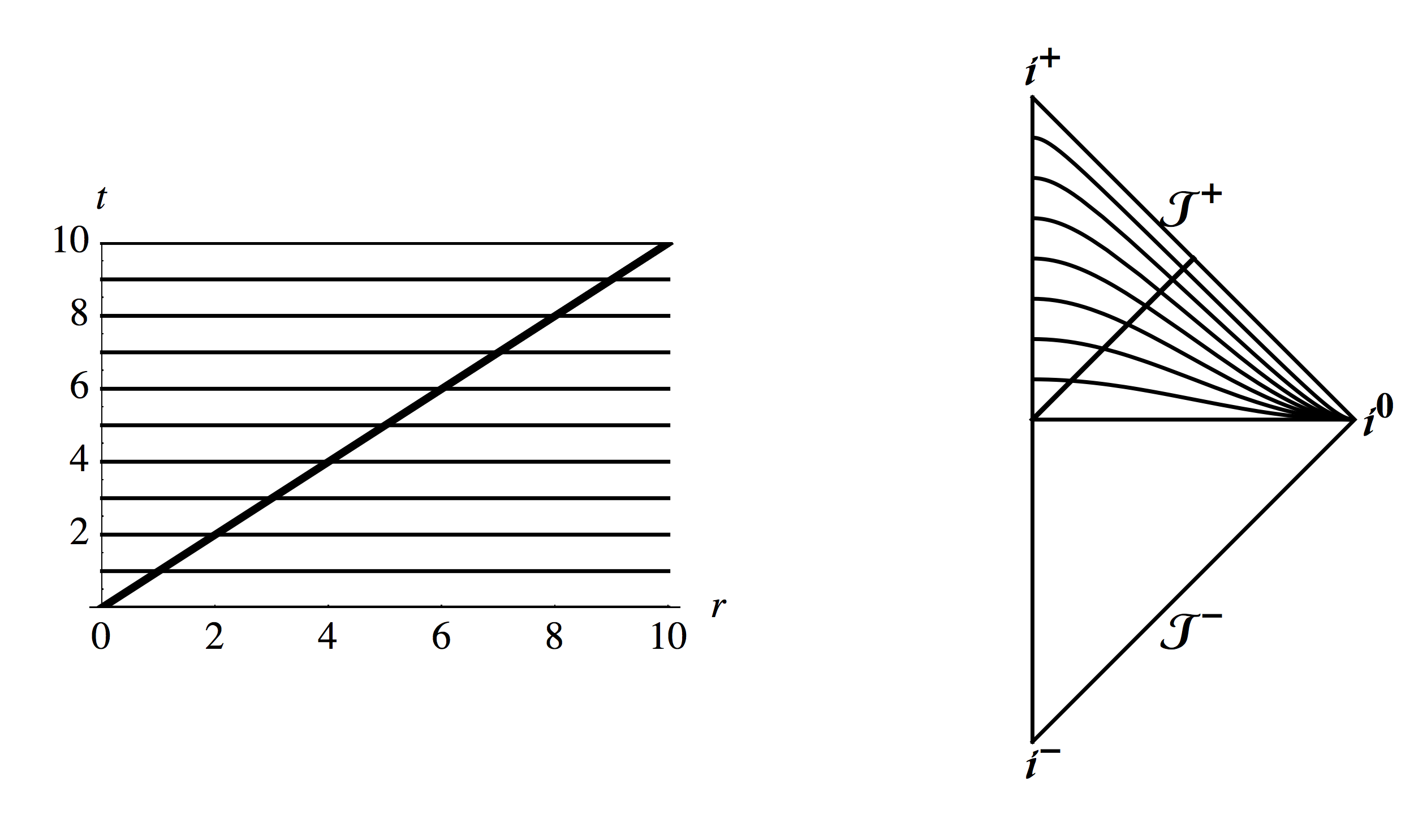}
\caption{The level sets of the standard time $t$ depicted in a spacetime diagram (left panel) and a Penrose diagram (right panel) together with a characteristic line from the origin. The boundary of the Penrose diagram includes the spatial origin and various notions of infinity. Past and future timelike infinity are depicted by points $i^-$ and $i^+$. The vertical line connecting $i^-$ and $i^+$ is the spatial origin $r=0$. Spatial infinity is denoted by the point $i^0$. Null curves reach past and future null infinity, denoted by $\scri^-$ and $\scri^+$, for infinite values of their affine parameter. 
\label{fig:PenroseStandard}}
\end{figure}
 
The Penrose diagram is then drawn using time and space coordinates
\mbox{$T=(V+U)/2$} and $R=(V-U)/2$ (see Fig.~\ref{fig:PenroseStandard}). 
The resulting metric $\bar{q}=-dT^2+dR^2$ is flat. The combined Penrose map 
is given by
\begin{align*} t&\mapsto\frac{1}{2} \left(\tan\left(\frac{T+R}{2}\right) 
+ \tan\left(\frac{T-R}{2}\right)\right), \\ 
 r&\mapsto \frac{1}{2} 
\left(\tan\left(\frac{T+R}{2}\right)-\tan\left(\frac{T-R}{2}\right)\right)\,.
\end{align*}

The boundary $\partial \bar{\mathcal{Q}} = \{T=\pm(\pi-R)$, \mbox{$R\in[0,\pi]\}$} corresponds to points at infinity with respect to the original Minkowski metric.  Asymptotic behavior of fields on $\mathcal{Q}$ can be studied using local differential geometry near this boundary where the conformal factor $\Omega=\cos T+\cos R$ vanishes. The part of the boundary without the points at $R=0,\pi$ is denoted by $\scri=\{T=\pm(\pi-R),R\in(0,\pi)\}$.  This part is referred to as null infinity because null geodesics reach it for an infinite value of their affine parameter. The differential of the conformal factor is non-vanishing at $\scri$, $d\Omega|_{\scri}\ne 0$, and $\scri$ consists of two parts $\scri^-$ and $\scri^+$ referred to as past and future null infinity.   

\subsection{Hyperboloidal coordinates and wave equations}
Equipped with the tools presented in the previous sections we now turn to the interplay 
between wave equations and spacetime geometry. Consider the free wave equation 
\be\label{eq:wave} 
u_{tt}-\Delta u= -\eta^{\mu\nu} \partial_\mu \partial_\nu u = 0\,.
\ee
Radial solutions for the rescaled field $v:=r u$ obey the 2-dimensional free wave equation
\be\label{eq:2d}
v_{tt} - v_{rr} = 0\,,
\ee 
on  $(t,r)\in \R^+_0\times\R^+_0$ with vanishing boundary condition at the
origin.  Initial data are specified on the $t=0$ hypersurface.
The general solution to this system is such that the data propagate to infinity and 
leave nothing behind due to the validity of Huygens' principle. 
Intuitively, this behavior seems to contradict two well-known properties of the 
free wave equation: \emph{conservation of energy} and \emph{time-reversibility}. 

The conserved energy for the free wave equation \eqref{eq:2d} reads
\[ E(v) = \int_0^\infty \frac{1}{2} \left(v_t(t,r)^2 + v_r(t,r)^2 \right) dr\,. \]
The conservation of energy is counterintuitive because the waves propagate to infinity leaving nothing behind. One would expect a natural energy norm to decrease rapidly to zero with a non-positive energy flux at infinity. The conservation of energy, however, implies that at very late times the solution is in some sense similar to the initial state \cite{Tao}. 

Another counterintuitive property of the free wave equation is its 
time-reversibility, meaning that if $u(t,r)$ solves the equation, so does $u(-t,r)$. 
Data on a Cauchy hypersurface determine the solution at all future \emph{and past} times in contrast to parabolic (dissipative) equations which are solvable only forward in time due to loss of energy to the future. 

Both of these counterintuitive properties depend on our description of the problem. We can choose coordinates in which energy conservation and time-reversibility is violated. Of course, it is always possible to find coordinates which break symmetries or hide features of an equation. We argue below that the hyperboloidal coordinates we employ emphasize the intuitive properties of the equation rather than blur them.

The reason behind the conservation of energy integrated along level sets of $t$ can be seen in the Penrose diagram Fig.~\ref{fig:PenroseStandard}. The outgoing characteristic line along which the wave propagates to infinity intersects all leaves of the $t$-foliation. When the energy expression is integrated globally, the energy of the initial wave will therefore still contribute to the result.   The hyperboloidal $T$-foliation depicted in Fig.~\ref{fig:Kelvin}, however, allows for outgoing null rays to leave the leaves of the foliation. Therefore one would expect that the energy flux through infinity is negative when integrated along the leaves of the hyperboloidal foliation.

The wave equation \eqref{eq:2d} has the same form in hyperboloidal coordinates that we employ
\[ w_{TT} - w_{RR} = 0\,, \]
where $w(T,R)=v(-\frac{T}{T^2-R^2},\frac{R}{T^2-R^2})$. The energy conservation and time-reversibility seems valid for this equation as well, but here we have the shrinking, bounded spatial domain $R\in [0,-T)$ where $T\to 0-$. The energy integrated along the leaves of this domain 
\[ E(w) = \int_0^{-T} \frac{1}{2} \left( w_T(T,R)^2 + w_R(T,R)^2 \right) dR\,, \]
decays in time. The energy flux reads
\[ \frac{\partial E}{\partial T} = - \frac{1}{2} \left(w_T(T,-T) - w_R(T,-T)\right)^2 \leq 0\,. \]
The energy flux through infinity vanishes only if the solution is constant or is propagating along future null infinity. When the solution has an outgoing component through future null infinity, the energy decays in time. This behavior is in accordance with physical intuition. 

Consider the time reversibility. The equation in the new coordinates is time-reversible, but the hyperboloidal initial value problem is not. Formally, this is again a consequence of the time dependence of the spatial domain given by $R<-T$. Geometrically, we see in Fig.~\ref{fig:time} that the  union of the past and future domain of dependence of the hyperboloidal surface $T=-1$ covers only a portion of Minkowski spacetime whereas for the Cauchy surface $t=0$ such a union gives the global spacetime.

\begin{figure}[ht]
\center
\includegraphics[width=0.23\textwidth]{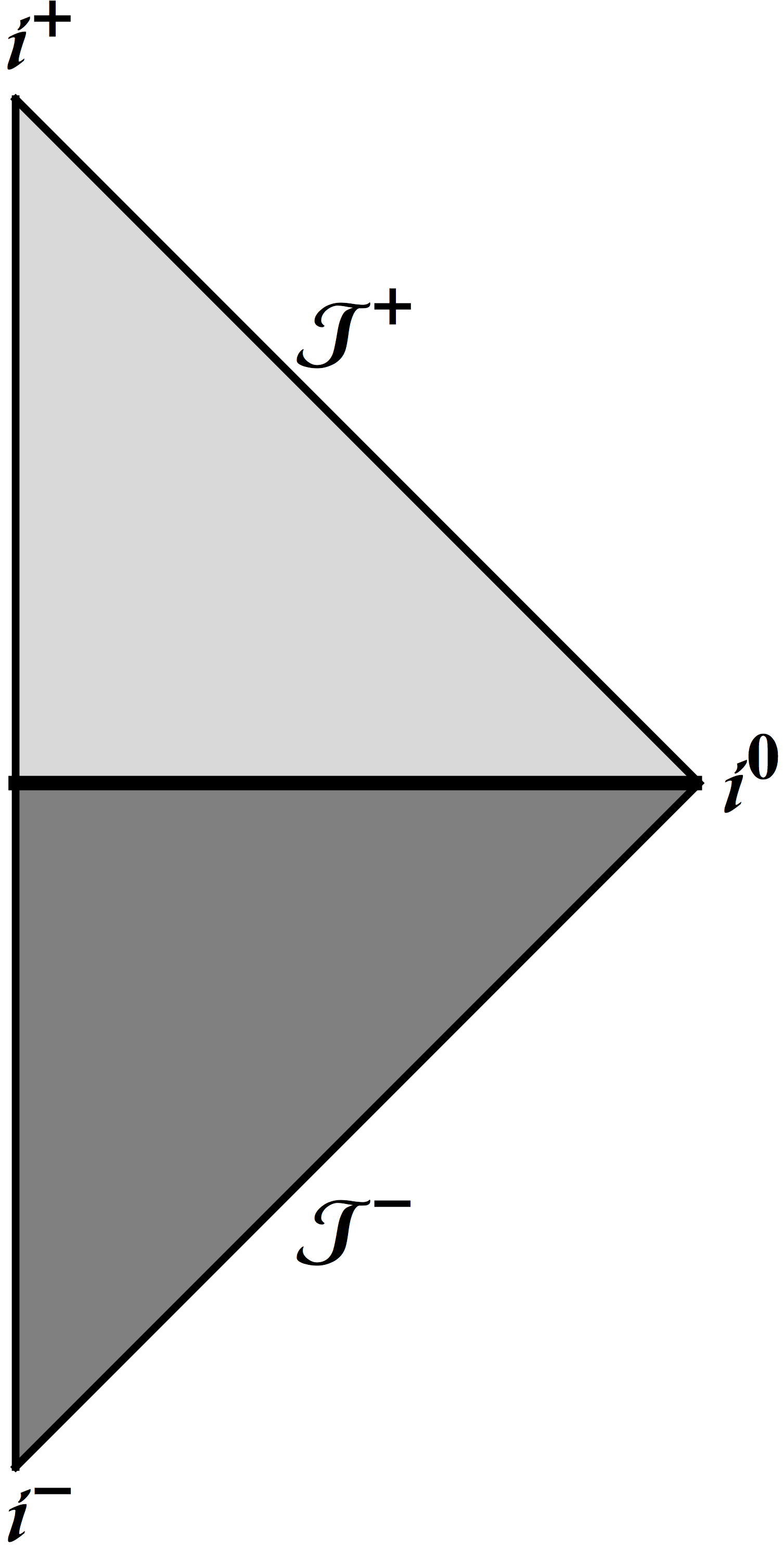} \hspace{2cm}
\includegraphics[width=0.23\textwidth]{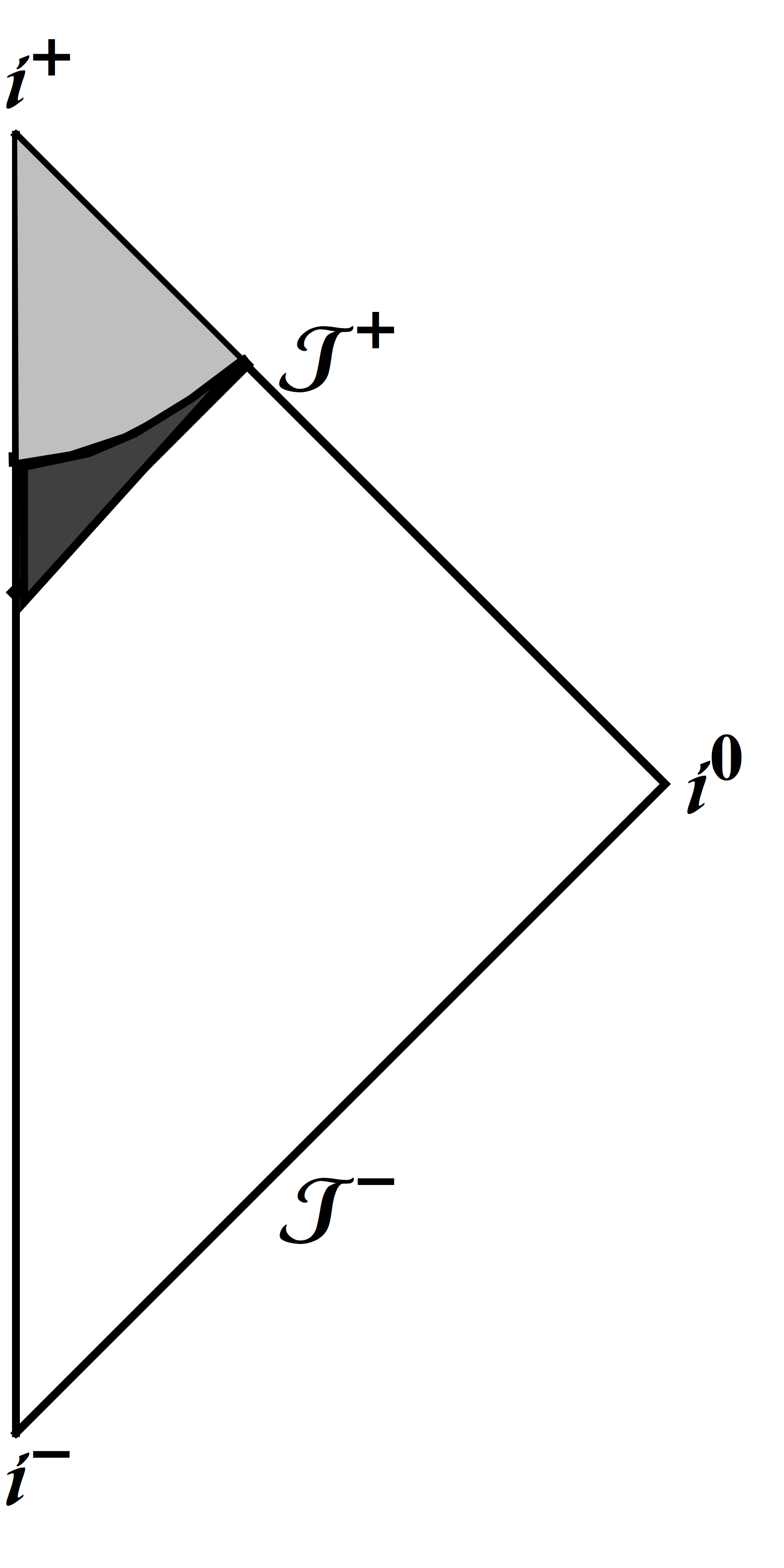}
\caption{Comparison of the future (light gray) and past (dark gray) domains of dependence for the Cauchy surface $t=0$ (left) and the hyperboloidal surface $T=-1$ (right). 
\label{fig:time}}
\end{figure}

In summary, the hyperboloidal foliation given by the Kelvin inversion captures quantitatively the propagation of energy to infinity and leads to a time-irreversible wave propagation problem. Further, the transformation translates asymptotic analysis for $t\to\infty$ to local analysis for $T\to0-$.

\section{Derivation of the equations and preliminaries}

\subsection{First-order formulation and similarity coordinates}
\label{sec:sim}

We start from $(-\partial_T^2+\Delta_X)u(T,X)+u(T,X)^3=0$ in the hyperboloidal coordinates
$T=-\frac{t}{t^2-|x|^2}$, $X=\frac{x}{t^2-|x|^2}$ for the rescaled unknown 
\[ u(T,X)=\frac{1}{T^2-|X|^2}v\left (-\frac{T}{T^2-|X|^2},\frac{X}{T^2-|X|^2}\right ). \]
As discussed in the introduction, the domain we are interested in is $T\in [-1,0)$ and 
$|X|<|T|$.
Our intention is to study the stability of the self-similar solution $u_0(T)=\frac{\sqrt 2}{(-T)}$.
Thus, it is natural to introduce the similarity coordinates
\begin{equation}
\label{eq:sim0}
 \tau=-\log(-T),\quad \xi=\tfrac{X}{(-T)} 
 \end{equation}
with domain $\tau\geq 0$ and $|\xi|<1$.
The derivatives transform according to
\[ \partial_T=e^\tau (\partial_\tau+\xi^j \partial_{\xi^j}), \quad
\partial_{X^j}=e^\tau \partial_{\xi^j}. \]
This implies
\[ \partial_T^2=e^{2\tau}(\partial_\tau^2+\partial_\tau+2\xi^j \partial_{\xi^j}\partial_\tau
+\xi^j \xi^k \partial_{\xi^j}\partial_{\xi^k}+2\xi^j \partial_{\xi^j}) \]
and $\partial_{X_j}\partial_{X^j}=e^{2\tau}\partial_{\xi_j}\partial_{\xi^j}$.
Consequently, for the function 
\[ U(\tau,\xi):=u(-e^{-\tau},e^{-\tau}\xi) \] we obtain
from $(-\partial_T^2+\partial_{X_j}\partial_{X^j})u(T,X)+u(T,X)^3=0$ the equation
\[ [\partial_\tau^2+\partial_\tau+2\xi^j \partial_{\xi^j}\partial_\tau
-(\delta^{jk}-\xi^j \xi^k) \partial_{\xi^j}\partial_{\xi^k}+2\xi^j \partial_{\xi^j}]U(\tau,\xi)
=e^{-2\tau}U(\tau,\xi)^3. \]
In order to get rid of the time-dependent prefactor on the right-hand side, we rescale
and set $U(\tau,\xi)=e^\tau \psi(\tau,\xi)$ which yields
\begin{equation}
\label{eq:psi}
[\partial_\tau^2+3\partial_\tau+2\xi^j \partial_{\xi^j}\partial_\tau
-(\delta^{jk}-\xi^j \xi^k) \partial_{\xi^j}\partial_{\xi^k}+4\xi^j \partial_{\xi^j}+2]\psi(\tau,\xi)
=\psi(\tau,\xi)^3.
\end{equation}
The fundamental self-similar solution is given by 
\[ \psi_0(\tau,\xi):=e^{-\tau}u_0(-e^{-\tau},e^{-\tau}\xi)=\sqrt{2}. \]
Writing $\psi=\sqrt{2}+\phi$ we find the equation
\begin{align}
\label{eq:phi}
[\partial_\tau^2&+3\partial_\tau+2\xi^j \partial_{\xi^j}\partial_\tau
-(\delta^{jk}-\xi^j \xi^k) \partial_{\xi^j}\partial_{\xi^k}+4\xi^j \partial_{\xi^j}+2]\phi(\tau,\xi)
\nonumber \\
&=6\phi(\tau,\xi)+3\sqrt{2}\phi(\tau,\xi)^2+\phi(\tau,\xi)^3.
\end{align}
In summary, we have applied the coordinate transformation
\[ \tau=-\log\left ( \frac{t}{t^2-|x|^2}\right ),\quad \xi=\frac{x}{t} \]
with inverse
\[ t=\frac{e^\tau}{1-|\xi|^2},\quad x=\frac{e^\tau \xi}{1-|\xi|^2} \]
and $\phi(\tau,\xi)$ 
solves Eq.~\eqref{eq:phi} for $\tau > 0$ and $|\xi|<1$
if and only if
\begin{equation}
\label{eq:vphi}
v(t,x)=\tfrac{\sqrt{2}}{t}+\tfrac{1}{t}\phi\left (-\log\tfrac{t}{t^2-|x|^2},\tfrac{x}{t}
\right )
\end{equation}
solves $(-\partial_t^2+\Delta_x)v(t,x)+v(t,x)^3=0$ for $(t,x) \in D^+(\Sigma_{-1})$.

We have $\partial_T u(T,X)=e^{2\tau} (\partial_\tau+\xi^j \partial_{\xi^j}+1)\psi(\tau,\xi)$
and thus, it is natural to use the variables $\phi_1=\phi$, 
$\phi_2=\partial_0 \phi+\xi^j \partial_j \phi+\phi$
in a first-order formulation.
We obtain
\begin{align}
\label{eq:css}
\partial_0 \phi_1&=-\xi^j \partial_j \phi_1-\phi_1+\phi_2 \nonumber \\
\partial_0 \phi_2&=\partial_j \partial^j \phi_1-\xi^j \partial_j \phi_2-2\phi_2
+6\phi_1+3\sqrt{2}\phi_1^2+\phi_1^3.
\end{align}
For later reference we also note that Eq.~\eqref{eq:vphi} implies
\begin{equation}
\label{eq:vtphi}
t^2 \partial_t v(t,x)=-\sqrt 2-\tfrac{2t^2}{t^2-|x|^2}\left (\tfrac{x^j}{t}\partial_j
\phi_1+\phi_1\right )+\tfrac{t^2+|x|^2}{t^2-|x|^2}\phi_2
\end{equation}
where it is understood, of course, that $\phi_1(\tau,\xi)$ and $\phi_2(\tau,\xi)$ are evaluated
at $\tau=-\log\frac{t}{t^2-|x|^2}$ and $\xi=\frac{x}{t}$.

\subsection{Norms}

Since our approach is perturbative in nature, the function space in which we study
Eq.~\eqref{eq:css} should be determined by the \emph{free} version
of Eq.~\eqref{eq:css}, i.e., 
\begin{align*}
\partial_0 \phi_1&=-\xi^j \partial_j \phi_1-\phi_1+\phi_2 \nonumber \\
\partial_0 \phi_2&=\partial_j \partial^j \phi_1-\xi^j \partial_j \phi_2-2\phi_2.
\end{align*} 
The natural choice for a norm is derived from the 
standard energy $\dot H^1 \times L^2$ of the free
wave equation.
In the present formulation this translates into
\[ \|\phi_1(\tau,\cdot)\|_{\dot H^1(B)}+\|\phi_2(\tau,\cdot)\|_{L^2(B)} \]
where $B=\{\xi \in \R^3: |\xi|<1\}$.
However, there is a slight technical problem since this is only a seminorm (the point
is that we are working on the bounded domain $B$).
In order to go around this difficulty, let us for the moment return to the radial context
and consider the free wave equation in $\R^{1+3}$,
\[ u_{tt}-u_{rr}-\tfrac{2}{r}u_r=0, \]
in the standard coordinates $t$ and $r=|x|$.
Now we make the following observation.
The conserved energy is given by
\[ E(u)=\tfrac12 \int_0^\infty [u_t^2+u_r^2]r^2 dr. \]
On the other hand, by setting $v=ru$, we obtain
\[ v_{tt}-v_{rr}=0 \]
with conserved energy $\frac12 \int_0^\infty [v_t^2+v_r^2] dr$, or, in terms of $u$,
\[ E'(u)=\tfrac12 \int_0^\infty [r^2 u_t^2+(ru_r+u)^2]dr. \]
The obvious question now is: how are $E$ and $E'$ related?
An integration by parts shows that $E$ and $E'$ are equivalent, up to a boundary term
$\lim_{r\to \infty}r u(r)^2$ which may be ignored by assuming some decay at spatial
infinity.
However, if we consider the \emph{local} energy contained in a ball of radius $R$, 
the boundary term can no longer be ignored
and one has the identity
\[ E_R'(u):=\tfrac12 \int_0^R [r^2 u_t^2+(ru_r+u)^2]dr=\tfrac12 R u(R)^2+\tfrac12 \int_0^R [u_t^2+u_r^2]r^2 dr. \]
The expression on the right-hand side is the standard energy with the term $\frac12 Ru(R)^2$ added.
This small modification has important consequences because unlike the standard energy, 
this now defines a norm.
Furthermore, $E'_R(u)$ is bounded along the wave flow since it is the local version of
a positive definite conserved quantity.

In the nonradial context the above discussion suggests to take
\[ \|\phi_1(\tau,\cdot)\|_{\dot H^1(B)}+\|\phi_1(\tau,\cdot)\|_{L^2(\partial B)}
+\|\phi_2(\tau,\cdot)\|_{L^2(B)}. \]
This norm is not very handy but fortunately, we have equivalence to $H^1\times L^2(B)$ as the
following result shows. 

\begin{lemma}
\label{lem:equivH1}
We have \footnote{As usual, $\|f\|_{L^2(\partial B)}$ has to be understood in the trace sense.}
\[ \|f\|_{H^1(B)}\simeq \|f\|_{\dot H^1(B)}+\|f\|_{L^2(\partial B)}. \]
\end{lemma}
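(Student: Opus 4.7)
The plan is to prove the two inequalities
$\|f\|_{H^1(B)} \lesssim \|f\|_{\dot H^1(B)} + \|f\|_{L^2(\partial B)}$ and the reverse
$\|f\|_{\dot H^1(B)} + \|f\|_{L^2(\partial B)} \lesssim \|f\|_{H^1(B)}$ separately.
The reverse direction is immediate: trivially $\|f\|_{\dot H^1(B)} \le \|f\|_{H^1(B)}$, and
the standard trace theorem for the smooth bounded domain $B$ gives
$\|f\|_{L^2(\partial B)} \lesssim \|f\|_{H^1(B)}$. So the real content is the forward direction,
which amounts to a Poincar\'e-type estimate
\[ \|f\|_{L^2(B)}^2 \lesssim \|f\|_{\dot H^1(B)}^2 + \|f\|_{L^2(\partial B)}^2. \]

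To prove this, my plan is to use the divergence theorem with the radial vector field $\xi \mapsto \xi$,
which is smooth up to $\overline{B}$ and satisfies $\xi\cdot\nu = 1$ on $\partial B$. By density it
suffices to treat $f \in C^\infty(\overline{B})$. Applying the divergence theorem to the vector
field $\xi \mapsto \xi |f(\xi)|^2$ yields
\[ \int_{\partial B} |f|^2\, d\sigma \;=\; \int_B \mathrm{div}(\xi\,|f|^2)\,d\xi
\;=\; 3\int_B |f|^2\, d\xi + 2\int_B (\xi\cdot\nabla f)\,f\,d\xi. \]
Rearranging and using $|\xi|\le 1$ on $B$ together with Cauchy--Schwarz,
\[ 3\|f\|_{L^2(B)}^2 \;\le\; \|f\|_{L^2(\partial B)}^2 + 2\|f\|_{L^2(B)}\|\nabla f\|_{L^2(B)}. \]
Applying the weighted Young inequality $2ab \le 2a^2 + \tfrac12 b^2$ to the last term absorbs
$2\|f\|_{L^2(B)}^2$ into the left-hand side, leaving
$\|f\|_{L^2(B)}^2 \le \|f\|_{L^2(\partial B)}^2 + \tfrac12\|f\|_{\dot H^1(B)}^2$,
which is exactly the desired Poincar\'e-type bound. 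Combining with the trivial estimate
$\|f\|_{\dot H^1(B)}^2 \le \|f\|_{\dot H^1(B)}^2$ then gives
$\|f\|_{H^1(B)}^2 \lesssim \|f\|_{\dot H^1(B)}^2 + \|f\|_{L^2(\partial B)}^2$.

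There is no real obstacle here: both the trace inequality and the divergence-theorem identity
are completely standard for the smooth bounded domain $B$, and the only small point is to justify
the computation for general $f\in H^1(B)$, which follows from density of $C^\infty(\overline B)$
in $H^1(B)$ together with the continuity of the trace map $H^1(B)\to L^2(\partial B)$.
An alternative route would be a compactness/contradiction argument using Rellich--Kondrachov
(a bounded sequence in $H^1$ with $\|\cdot\|_{\dot H^1}+\|\cdot\|_{L^2(\partial B)}\to 0$ would
converge to a constant whose trace vanishes, hence to $0$, contradicting $\|f_n\|_{L^2(B)}=1$),
but the divergence-theorem proof is quantitative and more elementary, so I would prefer it.
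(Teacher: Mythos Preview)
Your proof is correct. Both directions are handled properly: the reverse inequality via the trace theorem is exactly what the paper does, and your divergence-theorem argument for the forward direction is clean and valid.

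The paper takes a slightly different route for the forward inequality. Instead of applying the divergence theorem globally to $\xi|f|^2$, it works in polar coordinates along each ray: writing $rf(r\omega)=\int_0^r \partial_s[sf(s\omega)]\,ds$ and using Cauchy--Schwarz gives a \emph{pointwise} bound $r^2|f(r\omega)|^2\le |f(\omega)|^2+\int_0^1 |\partial_r f(r\omega)|^2 r^2\,dr$, which is then integrated over $B$. The two arguments are close cousins---both are integration by parts with the radial vector field---but yours is more direct (no polar coordinates, no intermediate pointwise estimate), while the paper's version has the minor advantage that only the radial derivative $\omega^j\partial_j f$ appears on the right, reflecting the one-dimensional origin of this norm discussed just before the lemma. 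For the purpose of the lemma as stated, your argument is entirely adequate and arguably cleaner.
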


\begin{proof}
For $x \in \R^3$ we write $r=|x|$ and $\omega=\frac{x}{|x|}$.
With this notation we have $f(x)=f(r\omega)$ and
\[ \|f\|_{L^2(B)}^2=\int_0^1 \int_{\partial B} |f(r\omega)|^2 d\sigma(\omega)r^2dr \]
where $d\sigma$ denotes the surface measure on the sphere.
First, we prove $\|f\|_{L^2(B)}\lesssim \|f\|_{\dot H^1(B)}+\|f\|_{L^2(\partial B)}$.
By density it suffices to consider $f\in C^\infty(\overline B)$.
The fundamental theorem of calculus and Cauchy-Schwarz imply
\begin{align*}
r f(r\omega)=\int_0^r \partial_s [sf(s\omega)]ds
\leq \left (\int_0^1 |\partial_r [rf(r\omega)]|^2 dr \right )^{1/2}.
\end{align*}
Expanding the square and integration by parts yield
\begin{align*} \int_0^1 |\partial_r [rf(r\omega)]|^2 dr&=\int_0^1 |\partial_r f(r\omega)|^2 r^2 dr
+\int_0^1 r\partial_r |f(r\omega)|^2 dr \\
&\quad +\int_0^1 |f(r\omega)|^2 dr \\
&=|f(\omega)|^2 + \int_0^1 |\partial_r f(r\omega)|^2 r^2 dr
\end{align*}
and thus,
\[ r^2 |f(r\omega)|^2\leq |f(\omega)|^2 + \int_0^1 |\omega^j \partial_j f(r\omega)|^2 r^2 dr. \]
Integrating this inequality over the ball $B$ yields the desired estimate.
In order to finish the proof, it suffices to show that $\|f\|_{L^2(\partial B)}\lesssim
\|f\|_{H^1(B)}$, but this is just the trace theorem (see e.g.~\cite{Eva98}, p.~258, Theorem 1).
\end{proof}

\section{Linear perturbation theory}
\label{sec:lin}

The goal of this section is to develop a functional analytic framework for studying the Cauchy
problem for the \emph{linearized} equation
\begin{align}
\label{eq:lincss0}
\partial_0 \phi_1&=-\xi^j \partial_j \phi_1-\phi_1+\phi_2 \nonumber \\
\partial_0 \phi_2&=\partial_j \partial^j \phi_1-\xi^j \partial_j \phi_2-2\phi_2
+6\phi_1.
\end{align}
The main difficulty lies with the fact that the differential operators involved are not
self-adjoint. It is thus natural to apply semigroup theory for studying
Eq.~\eqref{eq:lincss0}.
Before doing so, however, we commence with a heuristic discussion on instabilities.
The equation $(-\partial_T^2+\Delta_X)u(T,X)+u(T,X)^3=0$ is invariant
under time translations $T\mapsto T-a$ and the three 
Lorentz boosts for each direction $X^j$
\[ \left \{ \begin{array}{rcl}
T &\mapsto &T\cosh a-X^j \sinh a \\
X^j &\mapsto &-T\sinh a+X^j \cosh a \\
X^k &\mapsto &X^k \quad (k\not=j) \end{array} \right. \]
where $a\in\R$ is a parameter (the rapidity in case of the Lorentz boost).
In general, if $u_a$ is a one-parameter family of solutions to a nonlinear equation
$F(u_a)=0$, one obtains (at least formally)
\[ 0=\partial_a F(u_a)=DF(u_a)\partial_a u_a \]
and thus, $\partial_a u_a$ is a solution of the linearization of $F(u)=0$ at $u=u_a$.
In our case we linearize around the solution $u_0(T,X)=\frac{\sqrt{2}}{(-T)}$.
The time translation symmetry yields the one-parameter family 
$u_a(T,X):=\frac{\sqrt{2}}{a-T}$ and we have $\partial_a u_a(T,X)|_{a=0}=-\frac{\sqrt{2}}{T^2}$.
Taking into account the above transformations that led from $u$ to $\phi_1$, $\phi_2$, we 
obtain (after a suitable normalization) the functions
\begin{align}
\label{eq:mode1}
\phi_1(\tau,\xi)=e^\tau,\quad
\phi_2(\tau,\xi)=2e^\tau
\end{align}
and a simple calculation shows that \eqref{eq:mode1} indeed solve Eq.~\eqref{eq:lincss0}.
Thus, there exists a growing solution of Eq.~\eqref{eq:lincss0}.
Similarly, for the Lorentz boosts we consider 
\[ u_{a,j}(T,X)=\frac{\sqrt{2}}{X^j\sinh a-T\cosh a} \] and thus, 
$\partial_a u_{a,j}(T,X)|_{a=0}=-\frac{\sqrt{2}}{T^2}X^j$.
By recalling that $\frac{X^j}{(-T)}=\xi^j$, this yields the functions
\begin{align} 
\label{eq:mode0}
\phi_1(\tau,\xi)=\xi^j, \quad
\phi_2(\tau,\xi)=2\xi^j 
\end{align}
and it is straightforward to check that \eqref{eq:mode0} indeed solve Eq.~\eqref{eq:lincss0}.
This time the solution \eqref{eq:mode0} is not growing in $\tau$ 
but it is not decaying either.
It is important to emphasize that in our context, 
the time translation symmetry leads to a \emph{real} instability.
The reason being that $u_a(T,X)=\frac{\sqrt{2}}{a-T}$ yields the solution
\[ v_a(t,x)=\frac{1}{t^2-|x|^2}\frac{\sqrt{2}}{a+\frac{t}{t^2-|x|^2}}=\frac{\sqrt{2}}{t+a(t^2-|x|^2)} \]
of the original problem. This solution is part of a two-parameter family conjectured to
describe generic radial solutions of the focusing cubic wave equation \cite{BizZen09}.
If $a\not=0$, $v_a(t,x)$ decays like $t^{-2}$ as $t\to\infty$
for each fixed $x\in \R^3$.
This is the generic (dispersive) decay. 
On the other hand, the Lorentz transforms lead to apparent instabilities since
the function 
$u_{a,j}$ yields the solution
$v_{a,j}(t,x)=\frac{\sqrt 2}{t\cosh a+x^j\sinh a}$ of the original problem which still 
displays the nondispersive decay. 
Consequently, we expect a co-dimension one manifold of initial data that lead to 
nondispersive decay, 
as mentioned in the introduction. Since we are working with a fixed $u_0$, however, there is 
a four-dimensional unstable subspace of the linearized operator (to be defined below). 
This observation eventually leads to the co-dimension 4 statement in our Theorem \ref{thm:main}.
Note that other symmetries of the equation such as scaling, space translations, and
space rotations do not play a role in this context as the solution $u_0$ is invariant
under these. 

\subsection{A semigroup formulation for the free evolution}
We start the rigorous treatment by considering the free wave equation 
in similarity coordinates given by the system
\begin{align}
\label{eq:freecss}
\partial_0 \phi_1&=-\xi^j \partial_j \phi_1-\phi_1+\phi_2 \nonumber \\
\partial_0 \phi_2&=\partial_j \partial^j \phi_1-\xi^j \partial_j \phi_2-2\phi_2.
\end{align}
From Eq.~\eqref{eq:freecss} we read off the generator
\[ \tilde{\mb L}_0 \mb u(\xi)=\left (\begin{array}{c}
-\xi^j \partial_j u_1(\xi)-u_1(\xi)+u_2(\xi) \\
\partial_j \partial^j u_1(\xi)-\xi^j \partial_j u_2(\xi)-2 u_2(\xi) \end{array} \right ), \]
acting on functions in $\mc D(\tilde {\mb L}_0):=H^2(B)\cap C^2(\overline B
\backslash\{0\})\times H^1(B)\cap C^1(\overline B\backslash \{0\})$. 
With this notation we rewrite Eq.~\eqref{eq:freecss} as an ODE,
\[ \tfrac{d}{d\tau}\Phi(\tau)=\tilde{\mb L}_0 \Phi(\tau). \]
The appropriate framework for studying such a problem is provided by semigroup theory, i.e.,
our goal is to find a suitable Hilbert space $\mc H$ such that
there exists a map $\mb S_0: [0,\infty)\to \mc B(\mc H)$ satisfying
\begin{itemize}
\item $\mb S_0(0)=\mathrm{id}_\mc H$,
\item $\mb S_0(\tau)\mb S_0(\sigma)=\mb S_0(\tau+\sigma)$ for all $\tau,\sigma \geq 0$,
\item $\lim_{\tau\to 0+}\mb S_0(\tau)\mb u=\mb u$ for all $\mb u\in \mc H$,
\item $\lim_{\tau\to 0+}\frac{1}{\tau}[\mb S_0(\tau)\mb u-\mb u]=\mb L_0 \mb u$
for all $\mb u\in \mc D(\mb L_0)$ where $\mb L_0$ is the closure of $\tilde{\mb L}_0$.
\end{itemize}
Given such an $\mb S_0$, the function $\Phi(\tau)=\mb S_0(\tau)
\Phi(0)$ solves $\frac{d}{d\tau}\Phi(\tau)=\mb L_0\Phi(\tau)$.

Motivated by the above discussion we define a sesquilinear form on 
$\tilde{\mc H}:=H^1(B)\cap C^1(B)\times L^2(B)\cap C(B)$
by
\[ (\mb u|\mb v):=\int_B \partial_j u_1(\xi)\overline{\partial^j v_1(\xi)}d\xi
+\int_{\partial B}
u_1(\omega)\overline{v_1(\omega)}d\sigma(\omega)
+\int_B u_2(\xi)\overline{v_2(\xi)}d\xi. \]
Lemma \ref{lem:equivH1} implies that $(\cdot|\cdot)$ is an inner product
on $\tilde{\mc H}$ and as usual, we denote the induced norm by $\|\cdot\|$.
Furthermore, we write $\mc H$ for the completion of $\tilde{\mc H}$ with respect to
$\|\cdot\|$.
We remark that $\mc H$ is equivalent to $H^1(B)\times L^2(B)$ as a Banach space by
Lemma \ref{lem:equivH1}.

\begin{proposition}
\label{prop:free}
The operator $\tilde{\mb L}_0: \mc D(\tilde{\mb L}_0)\subset \mc H\to \mc H$ is closable 
and its closure, denoted by $\mb L_0$,
generates a strongly continuous semigroup $\mb S_0: [0,\infty) \to \mc B(\mc H)$ satisfying
$\|\mb S_0(\tau)\|\leq e^{-\frac12 \tau}$
for all $\tau\geq 0$.
In particular, we have 
$\sigma(\mb L_0)\subset \{z\in \C: \Re z\leq -\tfrac12\}$.
\end{proposition}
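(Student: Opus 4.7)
The overall strategy is the Lumer--Phillips theorem. To obtain $\|\mb S_0(\tau)\|\leq e^{-\tau/2}$, I would verify two properties of $\tilde{\mb L}_0$ on the Hilbert space $\mc H$: (D) the dissipativity $\Re(\tilde{\mb L}_0 \mb u \,|\, \mb u) \leq -\tfrac12 \|\mb u\|^2$ for every $\mb u \in \mc D(\tilde{\mb L}_0)$, and (R) that $\mathrm{range}(\lambda - \tilde{\mb L}_0)$ is dense in $\mc H$ for some real $\lambda > -\tfrac12$. Any densely defined dissipative operator is automatically closable, so (D) yields closability; (D) and (R) together with Lumer--Phillips then give that the closure $\mb L_0$ generates a strongly continuous semigroup $\mb S_0$ with the claimed contraction bound, and the spectral inclusion $\sigma(\mb L_0)\subset\{z : \Re z \leq -\tfrac12\}$ is a standard consequence of the growth bound of the semigroup.

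For (D) I would expand $(\tilde{\mb L}_0 \mb u \,|\, \mb u)$ using the three pieces of the inner product and repeatedly integrate by parts via the divergence theorem applied to the dilation field $\xi^j \partial_j$ on the unit ball. The core identity
\[ 2\Re \int_B \xi^k \partial_k \partial_j u_1 \,\overline{\partial^j u_1}\, d\xi \;=\; \int_{\partial B} |\nabla u_1|^2\, d\sigma \;-\; 3\int_B |\nabla u_1|^2 \, d\xi, \]
together with its analogue for $u_2$, contributes bulk terms $+\tfrac32\|\nabla u_1\|_{L^2(B)}^2$ and $+\tfrac32\|u_2\|_{L^2(B)}^2$. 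Combined with the coefficient $-2$ supplied by the explicit $-u_1$ and $-2 u_2$ terms in $\tilde{\mb L}_0$, this produces a net $-\tfrac12$ on both bulk norms. The cross contributions $\Re\int_B \partial_j u_2 \,\overline{\partial^j u_1}\,d\xi$ and $\Re\int_B \Delta u_1\,\overline{u_2}\,d\xi$ cancel up to the boundary term $\Re\int_{\partial B}\omega^j \partial_j u_1\,\overline{u_2}\,d\sigma$. Finally, all boundary contributions — from these integrations by parts, from the built-in boundary piece of the inner product, and from the trace of $-\xi^j\partial_j u_1 - u_1 + u_2$ appearing in the first component of $\tilde{\mb L}_0$ — can be rearranged, after completing the square in $\omega^j\partial_j u_1 - (u_2 - u_1)$ on $\partial B$, to yield $-\tfrac12\|u_1\|_{L^2(\partial B)}^2$ plus nonpositive quadratic pieces. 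Summation gives (D).

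For (R) I would solve $(\lambda - \tilde{\mb L}_0)\mb u = \mb f$ for $\mb f$ in a dense subclass such as $C_c^\infty(B)\times C_c^\infty(B)$. The first component determines $u_2 = (\lambda+1)u_1 + \xi^j \partial_j u_1 - f_1$; substituting into the second reduces to the scalar equation
\[ -\Delta u_1 \;+\; (\xi^j\partial_j + \lambda + 1)(\xi^k \partial_k + \lambda + 2) u_1 \;=\; \tilde f \]
on $B$, with no boundary condition on $\partial B$ (the transport part is outgoing). In polar coordinates $\xi = r\omega$ the first-order operator is $r\partial_r$, so the transport integrates along radial characteristics from the origin to the sphere. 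For $\Re\lambda$ sufficiently large the associated sesquilinear form is coercive on $H^1(B)$, and Lax--Milgram supplies a weak solution; standard interior elliptic regularity then places $u_1$ in $C^2(\overline B\setminus\{0\})\cap H^2(B)$, so that the pair $\mb u = (u_1, u_2)$ indeed belongs to $\mc D(\tilde{\mb L}_0)$.

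The main obstacle is the range condition (R): the reduced equation for $u_1$ is degenerate both at $\partial B$, where the characteristic direction $\xi^j \partial_j$ becomes tangent to the sphere, and at the origin, where it vanishes; hence the Lax--Milgram coercivity and the regularity up to these sets must be handled with some care (one can, for instance, approximate by problems on $B_{1-\varepsilon}$ with $\varepsilon\to 0+$ and pass to the limit using uniform estimates). The dissipativity calculation, by contrast, is lengthy but algebraically mechanical, and once both ingredients are in place the conclusion of the proposition is immediate from Lumer--Phillips.
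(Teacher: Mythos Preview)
Your dissipativity argument (D) is correct and essentially the same as the paper's; the boundary completion-of-squares you describe is an equivalent reorganization of the inequality the paper uses.

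The range condition (R), however, has a genuine gap. The reduced scalar equation for $u_1$ is degenerate on $\partial B$: the principal symbol $(\delta^{jk}-\xi^j\xi^k)\eta_j\eta_k$ vanishes in the radial direction at $|\xi|=1$, so the natural bilinear form controls only $\int_B (1-|\xi|^2)|\nabla u_1|^2$ in the gradient part and is never coercive on $H^1(B)$, no matter how large $\lambda$ is (large $\lambda$ only strengthens the $L^2$ term). If instead you integrate $-\Delta u_1$ by parts to recover the full $\int_B|\nabla u_1|^2$, you pick up a boundary term $-\int_{\partial B}\partial_n u_1\,\overline v$ that cannot be discarded without imposing an unwanted boundary condition. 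Your proposed fix, approximating on $B_{1-\varepsilon}$, forces a boundary condition at $|\xi|=1-\varepsilon$ and requires uniform-in-$\varepsilon$ estimates, neither of which is supplied. Finally, even granting a weak $H^1(B)$ solution, interior elliptic regularity yields smoothness only in the open ball; it does not give the $C^2(\overline B\setminus\{0\})$ regularity required by $\mc D(\tilde{\mb L}_0)$, since the equation degenerates precisely at the boundary where you need it.

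The paper avoids all of this by an explicit construction (Lemma~\ref{lem:degell}): it fixes $\lambda=\tfrac12$, approximates the right-hand side by a finite spherical-harmonic sum, and for each mode solves the resulting singular ODE explicitly via hypergeometric functions. The bulk of the work is a careful asymptotic analysis (Frobenius indices at both endpoints, l'H\^opital arguments) to verify that the variation-of-constants solution is $C^2$ up to $\rho=1$ and $H^2$ near $\rho=0$. This produces density of the range rather than surjectivity, which is all Lumer--Phillips requires. Your sketch for (R) would need to be replaced by an argument of this type, or by a degenerate-elliptic theory that genuinely delivers regularity up to the characteristic boundary.
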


The proof of Proposition \ref{prop:free} requires the following technical lemma.

\begin{lemma}
\label{lem:degell}
Let $f\in L^2(B)$ and $\varepsilon>0$ be arbitrary.
Then there exists a function $u \in H^2(B)\cap C^2(\overline B\backslash \{0\})$ such that 
$g \in L^2(B)\cap C(\overline B\backslash \{0\})$, defined by
\begin{equation}
\label{eq:degell}
 g(\xi):=-(\delta^{jk}-\xi^j \xi^k)\partial_j \partial_k u(\xi)+5\xi^j\partial_j u(\xi)
+\tfrac{15}{4}u(\xi), 
\end{equation}
satisfies $\|f-g\|_{L^2(B)}<\varepsilon$.
\end{lemma}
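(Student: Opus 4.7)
The plan is to prove the lemma by combining density of polynomials in $L^2(B)$ with the observation that the operator
\[ Lu(\xi) := -(\delta^{jk}-\xi^j\xi^k)\partial_j\partial_k u(\xi) + 5\xi^j\partial_j u(\xi) + \tfrac{15}{4}u(\xi) \]
maps the space of polynomials surjectively onto itself. First I would invoke the classical Weierstrass approximation theorem (via continuous functions being dense in $L^2(B)$) to choose a polynomial $p$ with $\|f-p\|_{L^2(B)}<\varepsilon$. It then suffices to exhibit a polynomial $u$ solving $Lu=p$ exactly, since a polynomial automatically lies in $H^2(B)\cap C^2(\overline B\setminus\{0\})$ and its image under $L$ is another polynomial, hence in $L^2(B)\cap C(\overline B\setminus\{0\})$.

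The key observation is the action of $L$ on homogeneous polynomials. If $u$ is homogeneous of degree $k$, Euler's identity gives $\xi^j\partial_j u = ku$ and $\xi^j\xi^k\partial_j\partial_k u = k(k-1)u$, so
\[ Lu = -\Delta u + \bigl(k^2+4k+\tfrac{15}{4}\bigr)u = -\Delta u + (k+\tfrac{3}{2})(k+\tfrac{5}{2})u. \]
Since $\Delta$ lowers the degree by two, $L$ preserves the finite-dimensional space $P_N$ of polynomials of degree $\leq N$. With respect to the grading $P_N=\bigoplus_{k=0}^N H_k$ into homogeneous components ordered by increasing $k$, the matrix of $L$ is block triangular with diagonal blocks equal to the strictly positive scalars $(k+\tfrac32)(k+\tfrac52)$ times the identity on $H_k$. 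Since each diagonal block is invertible, $L|_{P_N}$ is invertible.

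Consequently, for the polynomial $p$ above (say of degree $N$) there exists a unique $u\in P_N$ with $Lu=p$; explicitly, one solves the triangular system from the top degree downward, at each stage inverting the scalar $(k+\tfrac32)(k+\tfrac52)$ and subtracting off the degree-$(k-2)$ contribution produced by $-\Delta$. Setting $g:=Lu=p$ gives $\|f-g\|_{L^2(B)}<\varepsilon$, proving the claim. The conceptual content of the argument lies in spotting the polynomial invariance of $L$ and the positivity of the eigenvalues $(k+\tfrac32)(k+\tfrac52)$; this is what lets us sidestep the analytic difficulty associated with the degeneracy of $L$ on $\partial B$, since the polynomial ansatz never requires solving a boundary value problem on the degenerate set.
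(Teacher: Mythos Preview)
Your proof is correct and considerably more elementary than the paper's. The computation with Euler's identity is sound: for $u$ homogeneous of degree $k$ one has $\xi^j\xi^k\partial_j\partial_k u = k(k-1)u$, whence $Lu=-\Delta u+(k+\tfrac32)(k+\tfrac52)u$, and since $\Delta$ strictly lowers degree, $L$ acts block-triangularly on $P_N=\bigoplus_{k\le N}H_k$ with invertible diagonal blocks. Solving from the top degree down produces a polynomial $u$ with $Lu=p$, and polynomial regularity trivially gives $u\in H^2(B)\cap C^2(\overline B\setminus\{0\})$ and $g=p\in L^2(B)\cap C(\overline B\setminus\{0\})$.

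The paper takes a different route: it approximates $f$ by a truncated spherical-harmonic expansion $g_N$, reduces the equation to a family of radial ODEs indexed by $\ell$, identifies these as hypergeometric equations, and constructs $u_{\ell,m}$ via variation of constants with explicit fundamental solutions $\psi_{0,\ell},\psi_{1,\ell}$. A substantial portion of that proof is devoted to verifying, via careful asymptotics and a de l'H\^opital argument, that the resulting $u_{\ell,m}$ is $C^2$ up to the degenerate boundary $\rho=1$. Your polynomial ansatz bypasses this entirely because regularity is automatic. What the paper's approach buys is that the hypergeometric machinery (the functions $\phi_{0,\ell},\phi_{1,\ell}$, the connection formulae, the Frobenius indices) is reused later in Proposition~\ref{prop:spec} for the spectral analysis of $\mb L$; so the investment pays off downstream. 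For the present lemma in isolation, however, your argument is both shorter and conceptually cleaner.
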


\begin{proof}
Since $C^\infty(\overline B)\subset L^2(B)$ is dense, 
we can find a $\tilde g\in C^\infty(\overline B)$
such that $\|f-\tilde g\|_{L^2(B)}<\frac{\varepsilon}{2}$.
We consider the equation
\begin{equation}
\label{eq:degellgtilde}
-(\delta^{jk}-\xi^j \xi^k)\partial_j \partial_k u(\xi)+5\xi^j\partial_j u(\xi)
+\tfrac{15}{4}u(\xi)=\tilde g(\xi).
\end{equation} 
In order to solve Eq.~\eqref{eq:degellgtilde} we define
$\rho(\xi)=|\xi|$, $\omega(\xi)=\frac{\xi}{|\xi|}$
and note that 
\[ \partial_j\rho(\xi)=\omega_j(\xi),\quad
\partial_j \omega^k(\xi)
=\frac{\delta_j{}^k-\omega_j(\xi)\omega^k(\xi)}{\rho(\xi)}. \]
Thus, interpreting $\rho$ and $\omega$ as new coordinates, we obtain
\begin{align*} 
\xi^j \partial_j u(\xi)&=\rho \partial_\rho u(\rho \omega) \\
\xi^j \xi^k \partial_j \partial_k u(\xi)&=\xi^j \partial_{\xi^j} [\xi^k \partial_{\xi^k} u(\xi)]
-\xi^j \partial_j u(\xi)=\rho^2 \partial_\rho^2 u(\rho \omega)
\end{align*}
as well as
\[ \partial^j \partial_j u(\rho \omega)=\left [\partial_\rho^2+\frac{d-1}{\rho}\partial_\rho
+\frac{\delta^{jk}-\omega^j \omega^k}{\rho^2}\partial_{\omega^j}\partial_{\omega^k}
-\frac{d-1}{\rho^2}\omega^j\partial_{\omega^j}\right ]u(\rho \omega) \]
where $d=3$ is the spatial dimension.
Consequently, Eq.~\eqref{eq:degellgtilde} can be written as
\begin{equation}
\label{eq:degellrw} 
\left [-(1-\rho^2)\partial_\rho^2-\tfrac{2}{\rho}\partial_\rho+5\rho \partial_\rho
+\tfrac{15}{4}-\tfrac{1}{\rho^2}\Delta_{S^2} \right ]u(\rho \omega)=\tilde g(\rho \omega) 
\end{equation}
where $-\Delta_{S^2}$ is the Laplace-Beltrami operator on $S^2$.
The operator $-\Delta_{S^2}$ is self-adjoint on $L^2(S^2)$ and we have
 $\sigma(-\Delta_{S^2})=\sigma_p(-\Delta_{S^2})=\{\ell(\ell+1): \ell\in \N_0\}$.
The eigenspace to the eigenvalue $\ell(\ell+1)$ is $(2\ell+1)$-dimensional and spanned
by the spherical harmonics $\{Y_{\ell,m}: m \in \Z, -\ell \leq m \leq \ell\}$ which 
are obtained by restricting harmonic homogeneous
polynomials in $\R^3$ to the two-sphere $S^2$, see e.g.~\cite{AtkHan12} for an up-to-date
account of this classical subject.
We may expand $\tilde g$ according to
\[ \tilde g(\rho \omega)=\sum_{\ell,m}^\infty g_{\ell,m}(\rho)Y_{\ell,m}(\omega) \]
where $\sum_{\ell,m}^\infty$ is shorthand for $\sum_{\ell=0}^\infty \sum_{m=-\ell}^\ell$
and for any fixed $\rho \in [0,1]$, the sum converges in $L^2(S^2)$, see \cite{AtkHan12}, 
p.~66, Theorem 2.34.
The expansion coefficient $g_{\ell,m}(\rho)$ is given by
\[ g_{\ell,m}(\rho)=(\tilde g(\rho\: \cdot)|Y_{\ell,m})_{L^2(S^2)}
:=\int_{S^2} \tilde g(\rho \omega)\overline{Y_{\ell,m}(\omega)}d\sigma(\omega) \]
and by dominated convergence it follows that $g_{\ell,m}\in C^\infty[0,1]$.
Furthermore, by using the identity $Y_{\ell,m}=[\ell(\ell+1)]^{-1}(-\Delta_{S^2})Y_{\ell,m}$
and the self-adjointness of $-\Delta_{S^2}$ on $L^2(S^2)$, we obtain
\begin{align*} 
g_{\ell,m}(\rho)&=\tfrac{1}{\ell(\ell+1)}\big(\tilde g(\rho\:\cdot)\big |(-\Delta_{S^2}) 
Y_{\ell,m}\big)_{L^2(S^2)} \\
&=\tfrac{1}{\ell(\ell+1)}\big ((-\Delta_{S^2})\tilde g(\rho\:\cdot) \big |Y_{\ell,m} \big)_{L^2(S^2)}. 
\end{align*}
Consequently, by iterating this argument
we see that the smoothness of $\tilde g$ implies the pointwise decay 
$\|g_{\ell,m}\|_{L^\infty(0,1)}\leq C_M \ell^{-M}$ for any $M \in \N$ and all $\ell \in \N$.
Now we set
\[ g_N(\xi):=\sum_{\ell,m}^N g_{\ell,m}(|\xi|)Y_{\ell,m}(\tfrac{\xi}{|\xi|}) \]
and note that $\|g_N(\rho \:\cdot)-\tilde g(\rho\:\cdot)\|_{L^2(S^2)}\to 0$ as $N\to\infty$.
Furthermore, by $\|g_{\ell,m}\|_{L^\infty(0,1)}\lesssim \ell^{-2}$ for all $\ell \in \N$ we infer 
\[ \sup_{\rho \in (0,1)}\|g_N(\rho\:\cdot)-\tilde g(\rho\:\cdot)\|_{L^2(S^2)}\lesssim 1 \] 
for all $N\in \N$ and dominated convergence yields
\[ \|g_N-\tilde g\|_{L^2(B)}^2=\int_0^1 \|g_N(\rho\:\cdot)-\tilde g(\rho\:\cdot)\|_{L^2(S^2)}^2
\rho^2 d\rho \to 0 \]
as $N\to\infty$.
Thus, we may choose $N$ so large that $\|g_N-\tilde g\|_{L^2(B)}<\frac{\varepsilon}{2}$.

By making the ansatz $u(\rho \omega)=\sum_{\ell,m}^N u_{\ell,m}(\rho)
Y_{\ell,m}(\omega)$ we derive from Eq.~\eqref{eq:degellrw} the (decoupled) 
system
\begin{align}
\label{eq:degellsep}
\Big [-(1-\rho^2)\partial_\rho^2 -\tfrac{2}{\rho}\partial_\rho +5\rho \partial_\rho
+ \tfrac{15}{4}+\tfrac{\ell(\ell+1)}{\rho^2} \Big ]u_{\ell,m}(\rho)=g_{\ell,m}(\rho)
\end{align}
for $\ell\in \N_0$, $\ell \leq N$, and $-\ell \leq m \leq \ell$.
Eq.~\eqref{eq:degellsep} has regular singular points at $\rho=0$
and $\rho=1$ with Frobenius indices $\{\ell,-\ell-1\}$ and $\{0,-\frac12\}$, respectively.
In fact, solutions to Eq.~\eqref{eq:degellsep} can be given in terms of hypergeometric functions.
In order to see this, define a new variable $v_{\ell,m}$ by $u_{\ell,m}(\rho)=\rho^\ell
v_{\ell,m}(\rho^2)$.
Then, Eq.~\eqref{eq:degellsep} with $g_{\ell,m}=0$ is equivalent to
\begin{equation}
\label{eq:hypgeo}
 z(1-z)v_{\ell,m}''(z)+[c-(a+b+1)z]v_{\ell,m}'(z)-ab v_{\ell,m}(z)=0 
 \end{equation}
with $a=\frac12(\frac32+\ell)$, $b=a+\frac12$, $c=\frac32+\ell$, and
$z=\rho^2$.
We immediately obtain the two solutions
\begin{align*} 
\phi_{0,\ell}(z)&={}_2F_1(\tfrac{3+2\ell}{4},\tfrac{5+2\ell}{4},\tfrac{3+2\ell}{2}; z) \\
\phi_{1,\ell}(z)&={}_2F_1(\tfrac{3+2\ell}{4},\tfrac{5+2\ell}{4}, \tfrac32; 1-z)
\end{align*}
where ${}_2F_1$ is the standard hypergeometric function, cf.~\cite{DLMF, Kri10}.
For later reference we also state a third solution, $\tilde \phi_{1,\ell}$, given by
\begin{equation}
\label{eq:tildephi}
\tilde \phi_{1,\ell}(z)=(1-z)^{-\frac12}{}_2F_1(\tfrac{3+2\ell}{4}, \tfrac{1+2\ell}{4},
\tfrac12; 1-z).
\end{equation}
Note that $\phi_{0,\ell}$ is analytic around $z=0$ whereas $\phi_{1,\ell}$
is analytic around $z=1$.
As a matter of fact, $\phi_{1,\ell}$ can be represented in terms of elementary functions
and we have
\begin{equation}
\label{eq:phi1elem}
 \phi_{1,\ell}(z)=\frac{1}{(2\ell+1)\sqrt{1-z}}\left [
\left (1-\sqrt{1-z}\right )^{-\ell-\frac12}-\left (1+\sqrt{1-z} \right)^{-\ell-\frac12} \right ], 
\end{equation}
see \cite{DLMF}. This immediately shows that
$|\phi_{1,\ell}(z)|\to \infty$ as $z\to 0+$ which implies that
$\phi_{0,\ell}$ and $\phi_{1,\ell}$ are linearly independent.
Transforming back, we obtain the two solutions
$\psi_{j,\ell}(\rho)=\rho^\ell \phi_{j,\ell}(\rho^2)$, $j=0,1$,
of Eq.~\eqref{eq:degellsep} with $g_{\ell,m}=0$.
By differentiating the Wronskian $W(\psi_{0,\ell},\psi_{1,\ell})=\psi_{0,\ell}\psi_{1,\ell}'
-\psi_{0,\ell}'\psi_{1,\ell}$ and inserting the equation, we infer
\[ W(\psi_{0,\ell},\psi_{1,\ell})'(\rho)=\left (\tfrac{3\rho}{1-\rho^2}-\tfrac{2}{\rho}\right )
W(\psi_{0,\ell},\psi_{1,\ell})(\rho) \]
which implies
\begin{equation}
\label{eq:Wcl} 
W(\psi_{0,\ell},\psi_{1,\ell})(\rho)=\frac{c_\ell}{\rho^2 (1-\rho^2)^\frac32} 
\end{equation}
for some constant $c_\ell$.
In order to determine the precise value of $c_\ell$, we first note that
\[
\psi_{j,\ell}'(\rho)=2\rho^{\ell+1}\phi_{j,\ell}'(\rho^2)+\ell \rho^{\ell-1}\phi_{j,\ell}(\rho^2).
\]
For the following we recall the differentiation formula \cite{DLMF}
\begin{equation}
\label{eq:diffhypgeo} 
\tfrac{d}{dz}{}_2F_1(a,b,c;z)=\tfrac{ab}{c}{}_2F_1(a+1,b+1,c+1;z)
\end{equation}
which is a direct consequence of the series representation of the hypergeometric
function.
Furthermore, by the formula \cite{DLMF}
\begin{equation}
\label{eq:hypgeoat1} 
\lim_{z \to 1-}[(1-z)^{a+b-c}{}_2F_1(a,b,c;z)]=\frac{\Gamma(c)\Gamma(a+b-c)}{\Gamma(a)
\Gamma(b)},
\end{equation}
valid for $\Re(a+b-c)>0$, we obtain
\begin{equation} 
\label{eq:phi0at1}
\lim_{z\to 1-}[(1-z)^\frac12 \phi_{0,\ell}(z)]=
\frac{\Gamma(\frac{3+2\ell}{2})\Gamma(\frac12)}{\Gamma(\frac{3+2\ell}{4})
\Gamma(\frac{5+2\ell}{4})}=2^{\ell+\frac12} 
\end{equation}
as well as
\[ \lim_{z\to 1-}[(1-z)^\frac32 \phi_{0,\ell}'(z)]=\frac{\Gamma(\frac{3+2\ell}{2})
\Gamma(\frac32)}{\Gamma(\frac{3+2\ell}{4})\Gamma(\frac{5+2\ell}{4})}=2^{\ell-\frac12}
\]
where we used the identity $\Gamma(x)\Gamma(x+\frac12)=\pi^\frac12 2^{1-2x}\Gamma(2x)$.
This yields
\begin{align*}
c_\ell&=\rho^2(1-\rho^2)^\frac32 W(\psi_{0,\ell},\psi_{1,\ell})(\rho) \\
&=\rho^2(1-\rho^2)^\frac32 \rho^\ell \phi_{0,\ell}(\rho^2)[2\rho^{\ell+1}\phi_{1,\ell}'(\rho^2)
+\ell \rho^{\ell-1}\phi_{1,\ell}(\rho^2)] \\
&\quad -\rho^2(1-\rho^2)^\frac32 \rho^\ell \phi_{1,\ell}(\rho^2)
[2\rho^{\ell+1}\phi_{0,\ell}'(\rho^2)
+\ell \rho^{\ell-1}\phi_{0,\ell}(\rho^2)] \\
&=-2\lim_{\rho\to 1-}(1-\rho^2)^{\frac32}\phi_{0,\ell}'(\rho^2) \\
&=-2^{\ell+\frac12}.
\end{align*}
By the variation of constants formula, 
a solution to Eq.~\eqref{eq:degellsep} is given by
\begin{align} 
\label{eq:ulm}
u_{\ell,m}(\rho)=&-\psi_{0,\ell}(\rho)\int_\rho^1 
\frac{\psi_{1,\ell}(s)}{W(\psi_{0,\ell},\psi_{1,\ell})(s)} \frac{g_{\ell,m}(s)}{1-s^2}ds \nonumber \\
&-\psi_{1,\ell}(\rho)\int_0^\rho
\frac{\psi_{0,\ell}(s)}{W(\psi_{0,\ell},\psi_{1,\ell})(s)} \frac{g_{\ell,m}(s)}{1-s^2}ds.
\end{align}

We claim that $u_{\ell,m} \in C^2(0,1]$.
By formally differentiating Eq.~\eqref{eq:ulm} we find
\[ u_{\ell,m}''(\rho)=-\frac{g_{\ell,m}(\rho)}{1-\rho^2}-\psi_{0,\ell}''(\rho)I_{1,\ell}(\rho)
-\psi_{1,\ell}''(\rho)I_{0,\ell}(\rho) \]
where $I_{j,\ell}$, $j=0,1$, denote the respective integrals in Eq.~\eqref{eq:ulm}.
This implies $u_{\ell,m}\in C^2(0,1)$ but $u_{\ell,m}''(\rho)$ has an apparent singularity
at $\rho=1$.
We have the asymptotics $\psi_{0,\ell}''(\rho)I_{1,\ell}(\rho)\simeq (1-\rho)^{-1}$ and
$\psi_{1,\ell}''(\rho)I_{0,\ell}(\rho)\simeq 1$ as $\rho \to 1-$.
Thus, a necessary condition for $\lim_{\rho \to 1-}u_{\ell,m}''(\rho)$ to exist is
\[ a_{\ell,m}:=\lim_{\rho \to 1-}[(1-\rho^2)\psi_{0,\ell}''(\rho)I_{1,\ell}(\rho)]=-g_{\ell,m}(1). \]
This limit can be computed by de l'H\^opital's rule, i.e., we write
\begin{align*} 
a_{\ell,m}&=\lim_{\rho \to 1-}\frac{I_{1,\ell}(\rho)}{[(1-\rho^2)\psi_{0,\ell}''(\rho)]^{-1}} \\
&=\lim_{\rho \to 1-}\frac{I_{1,\ell}'(\rho)}{-[(1-\rho^2)\psi_{0,\ell}''(\rho)]^{-2}
[(1-\rho^2)\psi_{0,\ell}^{(3)}(\rho)-2\rho\psi_{0,\ell}''(\rho)]}.
\end{align*}
We have
\[ \lim_{\rho\to 1-}[(1-\rho^2)^{-\frac12}I_{1,\ell}'(\rho)]=-\tfrac{1}{c_\ell}
\lim_{\rho\to 1-}[\rho^2\psi_{1,\ell}(\rho)g_{\ell,m}(\rho)]=-\frac{g_{\ell,m}(1)}{c_\ell} \]
and thus, it suffices to show that
\begin{align}
\label{eq:cell} 
-\frac{1}{c_\ell}&=\lim_{\rho\to 1-}\frac{(1-\rho^2)\psi_{0,\ell}^{(3)}(\rho)-2\rho \psi_{0,\ell}''(\rho)}
{(1-\rho^2)^\frac12 [(1-\rho^2)\psi_{0,\ell}''(\rho)]^2} \nonumber \\
&=\lim_{\rho\to 1-}\frac{(1-\rho^2)^\frac72 \psi_{0,\ell}^{(3)}(\rho)-2\rho(1-\rho^2)^\frac52 
\psi_{0,\ell}''(\rho)}
{[(1-\rho^2)^\frac52 \psi_{0,\ell}''(\rho)]^2}.
\end{align}
Note that
\begin{align*}
\psi_{0,\ell}''(\rho)&=4\rho^{\ell+2}\phi_{0,\ell}''(\rho^2)+\mbox{lower order derivatives} \\
\psi_{0,\ell}^{(3)}(\rho)&=8\rho^{\ell+3}\phi_{0,\ell}^{(3)}(\rho^2)+\mbox{lower order derivatives}.
\end{align*}
Consequently, from the definition of $\phi_{0,\ell}$ and Eqs.~\eqref{eq:diffhypgeo},
\eqref{eq:hypgeoat1} we infer
\begin{align*}
\lim_{\rho\to 1-}[(1-\rho^2)^\frac52 \psi_{0,\ell}''(\rho)]&=4\lim_{z\to 1-}
[(1-z)^\frac52\phi_{0,\ell}''(z)]=-3c_\ell \\
\lim_{\rho\to 1-}[(1-\rho^2)^\frac72 \psi_{0,\ell}^{(3)}(\rho)]&=8\lim_{z\to 1-}
[(1-z)^\frac72\phi_{0,\ell}^{(3)}(z)]=-15 c_\ell 
\end{align*}
which proves \eqref{eq:cell}.
We have $g_{\ell,m}\in 
C^\infty[0,1]$ and thus, in order to prove the claim $u_{\ell,m}\in C^2(0,1]$, it
suffices to show that $\rho \mapsto (1-\rho^2)\psi_{0,\ell}''(\rho)I_{1,\ell}(\rho)$
belongs to $C^1(0,1]$. We write the integrand in $I_{1,\ell}$ as
\[ \frac{\psi_{1,\ell}(s)}{W(\psi_{0,\ell},\psi_{1,\ell})(s)}\frac{g_{\ell,m}(s)}{1-s^2}
=(1-s)^\frac12 O(1) \]
where in the following, $O(1)$ stands for a suitable function in $C^\infty(0,1]$.
Consequently, we infer $I_{1,\ell}(\rho)=(1-\rho)^\frac32 O(1)$.
We have $\psi_{0,\ell}=a_\ell \psi_{1,\ell}+\tilde a_\ell \tilde \psi_{1,\ell}$ where
$\tilde \psi_{1,\ell}(\rho):=\rho^\ell \tilde \phi_{1,\ell}(\rho^2)$, see Eq.~\eqref{eq:tildephi},
and $a_\ell, \tilde a_\ell \in \C$ are suitable constants.
This yields
\[ \psi_{0,\ell}''(\rho)=(1-\rho)^{-\frac52}O(1)+O(1) \]
and thus, $(1-\rho^2)\psi_{0,\ell}''(\rho)I_{1,\ell}(\rho)=O(1)+(1-\rho)^\frac52 O(1)$.
Consequently, $\rho \mapsto (1-\rho^2)\psi_{0,\ell}''(\rho)I_{1,\ell}(\rho)$ belongs
to $C^1(0,1]$ and by de l'H\^opital's rule we infer $u_{\ell,m}\in C^2(0,1]$
as claimed.

Next, we turn to the endpoint $\rho=0$.
The integrand of $I_{1,\ell}$ is bounded by $C_\ell \rho^{-\ell+1}$ and thus, we obtain
\begin{align*}
|I_{1,\ell}(\rho)|&\lesssim 1,\quad \ell\in \{0,1\} \\
|I_{1,2}(\rho)|&\lesssim |\log \rho| \\
|I_{1,\ell}(\rho)|&\lesssim \rho^{-\ell+2},\quad \ell\in \N,\:3\leq \ell \leq N
\end{align*}
for all $\rho \in (0,1]$.
The integrand of $I_{0,\ell}$ is bounded by $C_\ell \rho^{\ell+2}$ and this implies
$|I_{0,\ell}(\rho)|\lesssim \rho^{\ell+3}$ for all $\rho \in [0,1]$ and $\ell \in \N_0$,
$\ell\leq N$.
Thus, we obtain the estimates
\begin{align}
\label{eq:estu}
|u_{0,m}^{(k)}(\rho)|&\lesssim 1 \nonumber \\
|u_{1,m}^{(k)}(\rho)|&\lesssim \rho^{\max\{1-k,0\}} \nonumber \\
|u_{2,m}^{(k)}(\rho)|&\lesssim \rho^{2-k}|\log \rho|+\rho^{2-k} \nonumber \\
|u_{\ell,m}^{(k)}(\rho)|&\lesssim \rho^{2-k},\quad \ell \in \N,\:3\leq \ell\leq N
\end{align}
for all $\rho \in (0,1]$ and $k\in \{0,1,2\}$.

Now we define the function $u: \overline{B}\backslash \{0\} \to \C$ by  
\begin{equation}
\label{eq:solu}
 u(\xi):=\sum_{\ell,m}^N u_{\ell,m}(|\xi|)Y_{\ell,m}(\tfrac{\xi}{|\xi|}).
 \end{equation}
From the bounds \eqref{eq:estu} we obtain \footnote{
Note that $Y_{0,0}(\omega)=\frac{1}{\sqrt{4\pi}}$.}
$|\partial_j \partial_k u(\xi)|\lesssim |\xi|^{-1}$ 
which implies
 $u\in H^2(B)\cap C^2(\overline B\backslash \{0\})$ and by construction,
$u$ satisfies
 \[ -(\delta^{jk}-\xi^j \xi^k)\partial_j \partial_k u(\xi)+5\xi^j\partial_j u(\xi)
+\tfrac{15}{4}u(\xi)=g_N(\xi) \]
where $\|f-g_N\|_{L^2(B)}\leq \|f-\tilde g\|_{L^2(B)}+\|\tilde g-g_N\|_{L^2(B)}<\varepsilon$.
\end{proof}

\begin{proof}[Proof of Proposition \ref{prop:free}]
First note that $\tilde{\mb L}_0$ is densely defined.
Furthermore, we claim that
\begin{equation}
\label{eq:diss} \Re (\tilde{\mb L}_0 \mb u|\mb u)\leq -\tfrac12 \|\mb u\|^2 
\end{equation}
for all $\mb u\in \mc D(\tilde{\mb L}_0)$.
We write $[\tilde{\mb L}_0 \mb u]_A$ for the $A$-th component of $\tilde{\mb L}_0 \mb u$ where $A\in\{1,2\}$.
Then we have
\[ \partial_k [\tilde{\mb L}_0 \mb u]_1(\xi)=-\xi^j \partial_j \partial_k u_1(\xi)
-2\partial_k u_1(\xi)+\partial_k u_2(\xi). \]
By noting that
\begin{align*}
\Re [\partial_j \partial_k u_1\overline{\partial^k u_1}]&=
\tfrac12 \partial_j [\partial_k u_1\overline{\partial^k u_1}] \\
\xi^j \partial_j f(\xi)&=\partial_{\xi^j}[\xi^j f(\xi)]-3f(\xi)
\end{align*}
we infer 
\[ \Re [\xi^j \partial_j \partial_k u_1(\xi)\overline{\partial^k u_1(\xi)}]
=\tfrac12 \partial_{\xi^j}[\xi^j\partial_k u_1(\xi)\overline{\partial^k u_1(\xi)}]
-\tfrac32 \partial_k u_1(\xi)\overline{\partial^k u_1(\xi)} \]
and the divergence theorem implies
\begin{align*}
\Re \int_B \partial_k [\tilde{\mb L}_0\mb u]_1(\xi)\overline{\partial^k u_1(\xi)}d\xi
&=-\tfrac12 \int_{\partial B}\partial_k u_1(\omega)\overline{\partial^k u_1(\omega)}d\sigma(\omega) \\
&\quad-\tfrac12 \int_B \partial_k u_1(\xi)\overline{\partial^k u_1(\xi)}d\xi \\
&\quad +\Re \int_B \partial_k u_2(\xi)\overline{\partial^k u_1(\xi)}d\xi.
\end{align*}
Furthermore, we have
\begin{align*}
\int_B \partial_j \partial^j u_1(\xi)\overline{u_2(\xi)}d\xi&=\int_{\partial B}
\omega^j \partial_j u_1(\omega)\overline{u_2(\omega)}d\sigma(\omega) \\
&\quad -\int_B \partial_j u_1(\xi)\overline{\partial^j u_2(\xi)}d\xi
\end{align*}
and
\begin{align*}
\Re \int_B \xi^j \partial_j u_2(\xi)\overline{u_2(\xi)}d\xi&=
\tfrac12 \int_{\partial B}|u_2(\omega)|^2 d\sigma(\omega) 
-\tfrac32\int_B|u_2(\xi)|^2 d\xi 
\end{align*}
which yields
\begin{align*}
\Re \int_B [\tilde{\mb L}_0\mb u]_2(\xi)\overline{u_2(\xi)}d\xi&=
\Re \int_{\partial B}\omega^j \partial_j u_1(\omega)\overline{u_2(\omega)}d\sigma(\omega)
-\tfrac12 \|u_2\|_{L^2(\partial B)}^2 \\
&\quad -\Re \int_B \partial_j u_1(\xi)\overline{\partial^j u_2(\xi)}d\xi-\tfrac12 \|u_2\|_{L^2(B)}^2.
\end{align*}
In summary, we infer
\[
\Re (\tilde{\mb L}_0 \mb u|\mb u)=-\tfrac12 \|u_1\|_{\dot H^1(B)}^2-\tfrac12 \|u_2\|_{L^2(B)}^2 
+\int_{\partial B}A(\omega)d\sigma(\omega) \]
with 
\begin{align*}
A(\omega)&=-\tfrac12 |u_1(\omega)|^2- \tfrac12 |u_1(\omega)|^2-\tfrac12 |\nabla u_1(\omega)|^2
-\tfrac12 |u_2(\omega)|^2 \\
&\quad -\Re [\omega^j\partial_j u_1(\omega)
\overline{u_1(\omega)}]
+\Re [\omega^j \partial_j u_1(\omega)\overline{u_2(\omega)}]
+\Re [u_2(\omega)\overline{u_1(\omega)}] \\
&\leq -\tfrac12 |u_1(\omega)|^2
\end{align*}
where we have used the inequality
\[ \Re(\overline a b)+\Re(\overline a c)-\Re(\overline b c)\leq \tfrac12 (|a|^2+|b|^2+|c|^2),
\quad a,b,c \in \C \]
which follows from $0\leq |a-b-c|^2$.
This proves \eqref{eq:diss}.

The estimate \eqref{eq:diss} implies
\begin{align*} \big \|[\lambda-(\tilde{\mb L}_0+\tfrac12)]\mb u \big \|^2&=\lambda^2 \|\mb u\|^2
-2\lambda \Re \big ((\tilde{\mb L}_0+\tfrac12)\mb u  \big|\mb u \big)+\|(\tilde{\mb L}_0+\tfrac12)\mb u\|^2  \\
&\geq \lambda^2 \|\mb u\|^2
\end{align*}
for all $\lambda>0$ and $\mb u\in \mc D(\tilde{\mb L}_0)$.
Thus, in view of the Lumer-Phillips theorem (\cite{EngNag00}, p.~83, Theorem 3.15)
it suffices to prove density of the range
of $\lambda-\tilde{\mb L}_0$ for some $\lambda>-\frac12$.
Let $\mb f \in \mc H$ and $\varepsilon>0$ be arbitrary.
We consider the equation $(\lambda-\tilde{\mb L}_0)\mb u=\mb f$.
From the first component we infer $u_2=\xi^j \partial_j u_1+(\lambda+1)u_1-f_1$ and inserting
this in the second component we arrive at the degenerate elliptic problem
\begin{equation}
\label{eq:degelllambda}
 -(\delta^{jk}-\xi^j \xi^k)\partial_j \partial_k u(\xi)+2(\lambda+2)\xi^j\partial_j u(\xi)
+(\lambda+1)(\lambda+2)u(\xi)=f(\xi) 
\end{equation}
for $u=u_1$ and $f(\xi):=\xi^j \partial_j f_1(\xi)+(\lambda+2)f_1(\xi)+f_2(\xi)$.
Note that by assumption we have $f \in L^2(B)$.
Setting $\lambda=\frac12$ we infer from Lemma \ref{lem:degell} the existence of 
functions $u \in H^2(B)\cap C^2(\overline B\backslash \{0\})$ and $g\in L^2(B)$ such that
\[  -(\delta^{jk}-\xi^j \xi^k)\partial_j \partial_k u(\xi)+5\xi^j\partial_j u(\xi)
+\tfrac{15}{4}u(\xi)=g(\xi) \]
and $\|f-g\|_{L^2(B)}<\varepsilon$.
We set $u_1:=u$, $u_2(\xi):=\xi^j\partial_j u(\xi)+\frac32 u-f_1$,
$g_1:=f_1$, and $g_2(\xi):=g(\xi)-\xi^j \partial_j f_1(\xi)-\frac52 f_1(\xi)$.
Then we have $\mb u \in \mc D(\tilde{\mb L}_0)$, $\mb g \in \mc H$,
\[ \|\mb f-\mb g\|=\|f_2-g_2\|_{L^2(B)}=\|f-g\|_{L^2(B)}<\varepsilon \]
and by construction, $(\frac12-\tilde{\mb L}_0)\mb u=\mb g$.
Since $\mb f \in \mc H$ and $\varepsilon>0$ were arbitrary, this 
shows that $\rg (\frac12-\tilde{\mb L}_0)$ is dense in $\mc H$ which finishes the proof.
\end{proof}

\subsection{Well-posedness for the linearized problem}

Next, we include the potential term and consider the system
\begin{align}
\label{eq:lincss}
\partial_0 \phi_1&=-\xi^j \partial_j \phi_1-\phi_1+\phi_2 \nonumber \\
\partial_0 \phi_2&=\partial_j \partial^j \phi_1-\xi^j \partial_j \phi_2-2\phi_2
+6\phi_1.
\end{align}
We define an operator $\mb L'$, acting on $\mc H$, by
\[ \mb L' \mb u(\xi):=\left (\begin{array}{c}0 \\ 6u_1 \end{array} \right ). \]
Then we may rewrite Eq.~\eqref{eq:lincss} as an ODE
\[ \tfrac{d}{d\tau}\Phi(\tau)=(\mb L_0+\mb L')\Phi(\tau) \]
for a function $\Phi: [0,\infty)\to \mc H$.

\begin{lemma}
\label{lem:lin}
The operator $\mb L:=\mb L_0+\mb L': \mc D(\mb L_0)\subset \mc H\to \mc H$ 
generates a strongly-continuous
one-parameter semigroup $\mb S: [0,\infty)\to \mc B(\mc H)$ satisfying $\|\mb S(\tau)\|
\leq e^{(-\frac12+\|\mb L'\|)\tau}$.
Furthermore, for the spectrum of the generator we have
$\sigma(\mb L)\backslash \sigma(\mb L_0)=\sigma_p(\mb L)$.
\end{lemma}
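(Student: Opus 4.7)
The plan is to split the proof into two independent pieces: semigroup generation with the claimed growth bound, and the spectral inclusion.

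For generation, I would first observe that $\mb L'$ is a bounded operator on $\mc H$. Indeed, $\mb L'\mb u = (0, 6u_1)$, and $\|6u_1\|_{L^2(B)} \lesssim \|u_1\|_{L^2(B)} \lesssim \|u_1\|_{H^1(B)} \lesssim \|\mb u\|$ by Lemma \ref{lem:equivH1}. Since $\mb L_0$ generates the semigroup $\mb S_0$ with $\|\mb S_0(\tau)\| \leq e^{-\tau/2}$ by Proposition \ref{prop:free}, the bounded perturbation theorem for $C_0$-semigroups (see Engel--Nagel, Theorem III.1.3) yields that $\mb L = \mb L_0 + \mb L'$ with domain $\mc D(\mb L) = \mc D(\mb L_0)$ generates a strongly continuous semigroup $\mb S(\tau)$ on $\mc H$ satisfying $\|\mb S(\tau)\| \leq e^{(-1/2+\|\mb L'\|)\tau}$ for all $\tau \geq 0$.

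For the spectral statement $\sigma(\mb L)\setminus\sigma(\mb L_0)=\sigma_p(\mb L)$, the key observation is that $\mb L'$ is in fact \emph{compact} on $\mc H$: it factors as a bounded map $H^1(B) \to L^2(B)$ (multiplication by $6$) composed with the Rellich--Kondrachov compact embedding, and composed with the bounded inclusions induced by Lemma \ref{lem:equivH1}. Now fix $\lambda \in \rho(\mb L_0)$ and write the algebraic identity
\[ \lambda - \mb L = (\lambda - \mb L_0)\bigl[I - \mb R_{\mb L_0}(\lambda)\mb L'\bigr]. \]
The operator $\mb R_{\mb L_0}(\lambda)\mb L' \in \mc B(\mc H)$ is compact (composition of a bounded and a compact operator). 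By the Fredholm alternative, $I - \mb R_{\mb L_0}(\lambda)\mb L'$ is invertible if and only if $1 \notin \sigma_p(\mb R_{\mb L_0}(\lambda)\mb L')$. Consequently, if $\lambda \in \sigma(\mb L) \setminus \sigma(\mb L_0)$, then $I - \mb R_{\mb L_0}(\lambda)\mb L'$ fails to be invertible, so there is a nonzero $\mb u \in \mc H$ with $\mb R_{\mb L_0}(\lambda)\mb L' \mb u = \mb u$. Applying $\lambda - \mb L_0$ gives $\mb L'\mb u = (\lambda - \mb L_0)\mb u$, hence $\mb L\mb u = \lambda \mb u$, so $\lambda \in \sigma_p(\mb L)$.

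The main (only) obstacle is the compactness assertion for $\mb L'$; once that is in place, everything else is a standard application of the bounded perturbation theorem and the Fredholm alternative. No estimates beyond those already recorded in Proposition \ref{prop:free} and Lemma \ref{lem:equivH1} are required.
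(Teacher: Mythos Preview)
Your proof is correct and follows essentially the same route as the paper: boundedness of $\mb L'$ plus the bounded perturbation theorem for generation, and compactness of $\mb L'$ (via Rellich--Kondrachov and Lemma~\ref{lem:equivH1}) plus a Fredholm-type argument for the spectral inclusion. The only cosmetic difference is that the paper factors $\lambda-\mb L=[1-\mb L'\mb R_{\mb L_0}(\lambda)](\lambda-\mb L_0)$ and works with $\mb L'\mb R_{\mb L_0}(\lambda)$, whereas you factor on the other side and work with $\mb R_{\mb L_0}(\lambda)\mb L'$; both are equivalent, and in your version the eigenvector $\mb u=\mb R_{\mb L_0}(\lambda)\mb L'\mb u$ automatically lies in $\mc D(\mb L_0)$, which justifies applying $\lambda-\mb L_0$.
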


\begin{proof}
The first assertion is an immediate consequence of the Bounded Perturbation Theorem of semigroup theory, see
\cite{EngNag00}, p.~158, Theorem 1.3.
In order to prove the claim about the spectrum, we note that the operator $\mb L':
\mc H\to \mc H$ is compact by the compactness of the embedding $H^1(B)\hookrightarrow L^2(B)$
(Rellich-Kondrachov) and Lemma \ref{lem:equivH1}.
Assume that $\lambda \in \sigma(\mb L)$ and $\lambda \notin \sigma(\mb L_0)$.
Then we may write
$\lambda-\mb L=[1-\mb L'\mb R_{\mb L_0}(\lambda)](\lambda-\mb L_0)$.
Observe that the operator $\mb L'\mb R_{\mb L_0}(\lambda)$ is compact.
Furthermore, $1 \in \sigma(\mb L'\mb R_{\mb L_0}(\lambda))$ since otherwise we would have $\lambda \in
\rho(\mb L)$, a contradiction to our assumption.
By the spectral theorem for compact operators we infer $1 \in \sigma_p(\mb L'\mb R_{\mb L_0}(\lambda))$
which shows that there exists a nontrivial $\mb f \in \mc H$ such that $[1-\mb L' \mb R_{\mb L_0}(\lambda)]\mb f
=\mb 0$. Thus, by setting $\mb u:=\mb R_{\mb L_0}(\lambda)\mb f$, we infer 
$\mb u \in \mc D(\mb L_0)$,
$\mb u\not= \mb 0$, and $(\lambda-\mb L)\mb u=\mb 0$ which implies $\lambda \in \sigma_p(\mb L)$.
\end{proof}

\subsection{Spectral analysis of the generator}
In order to improve the rough growth bound for $\mb S$ given in Lemma \ref{lem:lin}, we 
need more information on the spectrum of $\mb L$.
Thanks to Lemma \ref{lem:lin} we are only concerned with point spectrum.
To begin with, we need the following result concerning $\mc D(\mb L_0)$.

\begin{lemma}
\label{lem:DL0}
Let $\delta \in (0,1)$ and $\mb u\in \mc D(\mb L_0)$. Then 
\[ \mb u|_{B_{1-\delta}} \in 
H^2(B_{1-\delta})\times H^1(B_{1-\delta}) \] where $B_{1-\delta}:=\{\xi\in \R^3: |\xi|<1-\delta\}$.
\end{lemma}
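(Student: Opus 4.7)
The plan is to reduce the regularity question to a standard interior elliptic regularity estimate by eliminating $u_2$ from the system defining $\tilde{\mb L}_0$.

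First I would show that if $\mb u\in\mc D(\mb L_0)$ and $\mb v := \mb L_0\mb u$, then in the sense of distributions on $B$ we have
\begin{align*}
-\xi^j\partial_j u_1 - u_1 + u_2 &= v_1, \\
\partial^j\partial_j u_1 - \xi^j\partial_j u_2 - 2u_2 &= v_2.
\end{align*}
This follows from the definition of the closure: pick $\mb u_n\in \mc D(\tilde{\mb L}_0)$ with $\mb u_n\to\mb u$ and $\tilde{\mb L}_0\mb u_n\to\mb v$ in $\mc H$, hence in $H^1(B)\times L^2(B)$ by Lemma \ref{lem:equivH1}; then the first row converges in $L^2(B)$ and every term has an obvious distributional limit, while in the second row each term is controlled in $\mc D'(B)$ through the $H^1$–convergence of $u_{1,n}$ and the $L^2$–convergence of $u_{2,n}$.

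Next I would use the first equation to express $u_2 = \xi^j\partial_j u_1 + u_1 + v_1$ and substitute this into the second. A short calculation, using $\xi^j\partial_j(\xi^k\partial_k u_1) = \xi^j\xi^k\partial_j\partial_k u_1 + \xi^k\partial_k u_1$, yields the single second-order equation
\begin{equation*}
(\delta^{jk}-\xi^j\xi^k)\partial_j\partial_k u_1 - 4\xi^j\partial_j u_1 - 2u_1 \;=\; v_2 + 2 v_1 + \xi^j\partial_j v_1 \qquad \text{in } \mc D'(B).
\end{equation*}
The right-hand side lies in $L^2(B)$, since $v_1\in H^1(B)$, $v_2\in L^2(B)$, and $\xi$ is bounded on $B$.

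The principal symbol of the operator on the left is $|\eta|^2 - (\xi\cdot\eta)^2 \geq (1-|\xi|^2)|\eta|^2$, which is bounded below by a positive constant depending on $\delta$ whenever $|\xi|\leq 1-\delta/2$. Hence on the slightly larger open ball $B_{1-\delta/2}$ the operator is a uniformly elliptic linear operator with smooth coefficients. Since $u_1\in H^1(B_{1-\delta/2})$ satisfies this equation distributionally with $L^2$–right-hand side, standard interior $H^2$–regularity (Nirenberg's difference-quotient method, applied with a smooth cutoff equal to $1$ on $B_{1-\delta}$ and supported in $B_{1-\delta/2}$) gives $u_1\in H^2(B_{1-\delta})$. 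Feeding this back into $u_2 = \xi^j\partial_j u_1 + u_1 + v_1$ produces $u_2\in H^1(B_{1-\delta})$, completing the proof.

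The only subtlety I foresee is the justification of the two distributional equations from the closure; everything else is a textbook application of interior elliptic regularity, made possible by the fact that the only degeneracy of the principal part sits on $\partial B$, safely outside $B_{1-\delta}$.
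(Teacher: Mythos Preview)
Your proposal is correct and follows essentially the same route as the paper: eliminate $u_2$ via the first row to obtain a second-order equation for $u_1$ whose principal part $(\delta^{jk}-\xi^j\xi^k)\partial_j\partial_k$ is uniformly elliptic on $B_{1-\delta/2}$, then invoke interior elliptic regularity. The only cosmetic difference is that the paper keeps the approximating sequence $\mb u_n\in\mc D(\tilde{\mb L}_0)$ throughout and applies the quantitative $H^2$-estimate to $u_{1n}$ before passing to the limit, whereas you first pass to the distributional equation for $u_1$ and then apply the regularity theorem directly; both are standard and equivalent.
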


\begin{proof}
Let $\mb u\in \mc D(\mb L_0)$. By definition of the closure there exists a sequence
$(\mb u_n)\subset \mc D(\tilde{\mb L}_0)$ such that $\mb u_n \to \mb u$ and $\tilde{\mb L}_0\mb u_n
\to  \mb L_0 \mb u$ in $\mc H$ as $n\to\infty$.
We set $\mb f_n:=\tilde{\mb L}_0 \mb u_n$ and note that $\mb f_n \in H^1(B)\cap C^1(B)\times
L^2(B)\cap C(B)$ for all $n\in \N$ by the definition of $\mc D(\tilde{\mb L}_0)$.
We obtain $u_{2n}(\xi)=\xi^j \partial_j u_{1n}(\xi)+u_{1n}(\xi)+f_{1n}(\xi)$ and
\begin{equation}
\label{eq:degell0}
 -(\delta^{jk}-\xi^j \xi^k)\partial_j \partial_k u_{1n}(\xi)+
 4\xi^j \partial_j u_{1n}(\xi)+2u_{1n}(\xi)=f_n(\xi) 
 \end{equation}
where $f_n(\xi):=-\xi^j \partial_j f_{1n}(\xi)-2f_{1n}(\xi)-f_{2n}(\xi)$, 
cf.~Eq.~\eqref{eq:degelllambda}.
By assumption we have $f_n \to f$ in $L^2(B)$ for some $f\in L^2(B)$.
Since 
\[ (\delta^{jk}-\xi^j \xi^k)\eta_j \eta_k\geq |\eta|^2-|\xi|^2|\eta|^2\geq 
\tfrac{\delta}{2} |\eta|^2 \]
for all $\xi \in B_{1-\delta/2}$ and all $\eta \in \R^3$, we see that the differential operator in Eq.~\eqref{eq:degell0}
is uniformly elliptic on $B_{1-\delta/2}$.
Thus, by standard elliptic regularity theory (see \cite{Eva98}, p.~309, Theorem 1) 
we obtain the estimate 
\begin{equation}
\label{eq:estell} \|u_{1n}\|_{H^2(B_{1-\delta})}\leq C_\delta \left (\|u_{1n}\|_{L^2(B_{1-\delta/2})}
+\|f_n\|_{L^2(B_{1-\delta/2})} \right ) 
\end{equation}
and since $\mb u_n \to \mb u$ in $\mc H$ implies $u_{1n} \to u_1$ in $L^2(B)$, we infer 
$u_1|_{B_{1-\delta}}\in H^2(B_{1-\delta})$.
Finally, from $u_{2n}(\xi)=\xi^j \partial_j u_{1n}(\xi)+u_{1n}(\xi)+f_{1n}(\xi)$ we conclude
$u_2|_{B_{1-\delta}}\in H^1(B_{1-\delta})$.
\end{proof}

The next result allows us to obtain information on the spectrum of $\mb L$ by
studying an ODE.
For the following we define the space $H_\mathrm{rad}^1(a,b)$ by
\[ \|f\|_{H_\mathrm{rad}^1(a,b)}^2:=\int_a^b |f'(\rho)|^2 \rho^2 d\rho
+\int_a^b |f(\rho)|^2 \rho^2 d\rho. \]

\begin{lemma}
\label{lem:specODE}
Let $\lambda \in \sigma_p(\mb L)$. Then there exists an $\ell \in \N_0$ and a
nonzero function $u\in C^\infty(0,1)\cap H_\mathrm{rad}^1(0,1)$ such that
\begin{align}
\label{eq:specl} 
-(1-\rho^2)u''(\rho)&-\tfrac{2}{\rho}u'(\rho) 
+2(\lambda+2)\rho u'(\rho) \nonumber \\
&+[(\lambda+1)(\lambda+2)-6]u(\rho) 
+\tfrac{\ell(\ell+1)}{\rho^2}u(\rho)=0 
\end{align}
for all $\rho \in (0,1)$.
\end{lemma}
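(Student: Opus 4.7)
The plan is to reduce the eigenvalue equation to a scalar PDE for the first component, upgrade its regularity on the interior of $B$, and then separate variables via spherical harmonics. Let $\lambda\in\sigma_p(\mb L)$ and pick a nonzero $\mb u=(u_1,u_2)\in\mc D(\mb L_0)$ with $\mb L\mb u=\lambda\mb u$. The first row of the system reads $u_2=\xi^j\partial_j u_1+(\lambda+1)u_1$, and substituting this into the second row, using $\xi^j\partial_j[\xi^k\partial_k u_1]=\xi^j\partial_j u_1+\xi^j\xi^k\partial_j\partial_k u_1$, yields after collecting terms
\[
-(\delta^{jk}-\xi^j\xi^k)\partial_j\partial_k u_1+2(\lambda+2)\xi^j\partial_j u_1+\bigl[(\lambda+1)(\lambda+2)-6\bigr]u_1=0
\]
on $B$. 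Note that $u_1\not\equiv 0$, since $u_1\equiv 0$ would immediately force $u_2\equiv 0$, contradicting $\mb u\ne 0$.

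Next I would upgrade $u_1$ to $C^\infty(B)$. Lemma \ref{lem:DL0} gives $u_1|_{B_{1-\delta}}\in H^2(B_{1-\delta})$ for every $\delta\in(0,1)$, and on $B_{1-\delta/2}$ the principal part $-(\delta^{jk}-\xi^j\xi^k)\partial_j\partial_k$ is uniformly elliptic with polynomial (hence smooth) coefficients. Standard interior elliptic regularity applied iteratively on the exhausting family $\{B_{1-\delta}\}$ therefore gives $u_1\in C^\infty(B)$. Rewriting the PDE in polar coordinates $\xi=\rho\omega$ by means of the identities collected in the proof of Lemma \ref{lem:degell} puts it in the form
\[
\Bigl[-(1-\rho^2)\partial_\rho^2-\tfrac{2}{\rho}\partial_\rho+2(\lambda+2)\rho\partial_\rho+(\lambda+1)(\lambda+2)-6-\tfrac{1}{\rho^2}\Delta_{S^2}\Bigr]u_1(\rho\omega)=0.
\]
Expanding $u_1(\rho\omega)=\sum_{\ell,m}u_{\ell,m}(\rho)Y_{\ell,m}(\omega)$ with $u_{\ell,m}(\rho)=(u_1(\rho\,\cdot)\mid Y_{\ell,m})_{L^2(S^2)}$ and projecting onto the $(\ell,m)$-eigenspace of $-\Delta_{S^2}$ (eigenvalue $\ell(\ell+1)$), the interior smoothness of $u_1$ lets the radial derivatives pass through the spherical integral, and one obtains precisely the stated ODE for each mode $u_{\ell,m}$.

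It remains to verify the function-space memberships and to select a nontrivial mode. Smoothness of $u_1$ on the open ball together with that of the spherical harmonics yields $u_{\ell,m}\in C^\infty(0,1)$. For the $H^1_{\mathrm{rad}}(0,1)$-bound I would use $|\nabla u_1|^2=|\partial_\rho u_1|^2+\rho^{-2}|\nabla_{S^2}u_1|^2$ together with Parseval on $S^2$, which rewrites $\|u_1\|_{H^1(B)}^2$ as
\[
\sum_{\ell,m}\int_0^1\Bigl(|u_{\ell,m}'(\rho)|^2+\bigl[1+\tfrac{\ell(\ell+1)}{\rho^2}\bigr]|u_{\ell,m}(\rho)|^2\Bigr)\rho^2\,d\rho,
\]
and finiteness of this sum (available since $\mb u\in\mc H\simeq H^1(B)\times L^2(B)$) forces $u_{\ell,m}\in H^1_{\mathrm{rad}}(0,1)$ for every $(\ell,m)$. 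Since $u_1\not\equiv 0$, at least one $u_{\ell,m}$ is nonzero, and the pair $(\ell,u_{\ell,m})$ furnishes the claim. The only step requiring genuine care is the interior-regularity bootstrap: the equation degenerates at $\partial B$, so one must work on the family $B_{1-\delta}$ and exhaust as $\delta\downarrow 0$; this is routine once Lemma \ref{lem:DL0} is in hand.
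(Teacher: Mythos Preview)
Your argument is correct and follows essentially the same path as the paper: reduce $(\lambda-\mb L)\mb u=0$ to the degenerate elliptic equation for $u_1$, invoke Lemma~\ref{lem:DL0} and interior elliptic regularity to obtain $u_1\in C^\infty(B)\cap H^1(B)$, then expand in spherical harmonics and project. The only cosmetic difference is that you use the Parseval identity on $S^2$ to get $u_{\ell,m}\in H^1_{\mathrm{rad}}(0,1)$, whereas the paper uses the cruder Cauchy--Schwarz bound $|u_{\ell,m}(\rho)|\le \|u_1(\rho\,\cdot)\|_{L^2(S^2)}$ (and likewise for the derivative); both are equally valid.
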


\begin{proof}
Let $\mb u \in \mc D(\mb L)=\mc D(\mb L_0)$ be an eigenvector associated to the eigenvalue
$\lambda \in \sigma_p(\mb L)$.
The spectral equation $(\lambda-\mb L)\mb u=\mb 0$ 
implies $u_2(\xi)=\xi^j \partial_j u_1(\xi)
+(\lambda+1)u_1(\xi)$ and
\begin{align} 
\label{eq:u1spec}
-(\delta^{jk}-\xi^j\xi^k)\partial_j \partial_k u_1(\xi)&+2(\lambda+2)\xi^j 
\partial_j u_1(\xi) \nonumber \\
&+[(\lambda+1)(\lambda+2)-6]u_1(\xi)=0, 
\end{align}
cf.~Eq.~\eqref{eq:degelllambda}, but this time the derivatives have to be interpreted in
the weak sense since a priori we merely have $u_1 \in H^2(B_{1-\delta})\cap H^1(B)$
and $u_2 \in H^1(B_{1-\delta})\cap L^2(B)$ by Lemma \ref{lem:DL0}.
However, by invoking elliptic regularity theory (\cite{Eva98}, p.~316, Theorem 3)
we see that in fact $u_1\in C^\infty(B)\cap H^1(B)$.
As always, we write $\rho=|\xi|$ and $\omega=\frac{\xi}{|\xi|}$.
We expand $u_1$ in spherical harmonics, i.e.,
\begin{equation}
\label{eq:expu1}
 u_1(\rho \omega)=\sum_{\ell,m}^\infty u_{\ell,m}(\rho) Y_{\ell,m}(\omega) 
 \end{equation}
with
$u_{\ell,m}(\rho)=(u_1(\rho\:\cdot)|Y_{\ell,m})_{L^2(S^2)}$
and for each fixed $\rho \in (0,1)$, the sum converges in $L^2(S^2)$.
By dominated convergence and $u_1\in C^\infty(B)$ it follows that $u_{\ell,m}\in C^\infty(0,1)$.
Similarly, we may expand $\partial_\rho u_1(\rho\omega)$ in 
spherical harmonics. The corresponding expansion coefficients are given by 
\[ (\partial_\rho u_1(\rho\,\cdot)|Y_{\ell,m})_{L^2(S^2)}=
\partial_\rho (u_1(\rho\,\cdot)|Y_{\ell,m})_{L^2(S^2)}
=\partial_\rho u_{\ell,m}(\rho) \]
where we used dominated convergence and the smoothness of $u_1$ to pull out the derivative $\partial_\rho$ of the inner
product.
In other words, we may interchange the operator $\partial_\rho$ with the sum in Eq.~\eqref{eq:expu1}.
Analogously, we may expand $\Delta_{S^2}u_1(\rho\, \cdot)$ and
the corresponding expansion coefficients are
\[ (\Delta_{S^2} u_1(\rho\,\cdot)|Y_{\ell,m})_{L^2(S^2)}
=(u_1(\rho\,\cdot)|\Delta_{S^2}Y_{\ell,m})_{L^2(S^2)}
=-\ell(\ell+1)u_{\ell,m}(\rho). \]
Thus, the operator $\Delta_{S^2}$ commutes with the sum in \eqref{eq:expu1}.
All differential operators that appear in Eq.~\eqref{eq:u1spec} are composed of $\partial_\rho$
and $\Delta_{S^2}$ and it is
therefore a consequence of Eq.~\eqref{eq:u1spec} that each $u_{\ell,m}$ satisfies
Eq.~\eqref{eq:specl}
for all $\rho\in (0,1)$.
Since at least one $u_{\ell,m}$ is nonzero, we obtain the desired function $u \in C^\infty(0,1)$.
To complete the proof, it remains to show that $u_{\ell,m} \in H^1_\mathrm{rad}(0,1)$.
We have
\[ |u_{\ell,m}(\rho)|=\left |(u_1(\rho\,\cdot)|Y_{\ell,m})_{L^2(S^2)}\right |
\leq \|u_1(\rho\,\cdot)\|_{L^2(S^2)} \]
and thus,
\begin{align*} 
\int_0^1 |u_{\ell,m}(\rho)|^2\rho^2 d\rho
\leq \int_0^1 \|u_1(\rho\,\cdot)\|_{L^2(S^2)}^2 \rho^2 d\rho=\|u_1\|_{L^2(B)}^2.
\end{align*}
Similarly, by dominated convergence,
\[ |\partial_\rho u_{\ell,m}(\rho)|=\left |(\partial_\rho u_1(\rho\,\cdot)|Y_{\ell,m})_{L^2(S^2)}
\right |
\lesssim \|\nabla u_1(\rho\,\cdot)\|_{L^2(S^2)} \]
and thus,
\[ \int_0^1 |u_{\ell,m}'(\rho)|^2 \rho^2 d\rho \lesssim \|\nabla u_1\|_{L^2(B)}^2. \]
Consequently, $u_1 \in H^1(B)$ implies $u_{\ell,m} \in H^1_\mathrm{rad}(0,1)$.
\end{proof}

\begin{proposition}
\label{prop:spec}
For the spectrum of $\mb L$ we have
\[ \sigma(\mb L) \subset \{z\in \C: \Re z\leq -\tfrac12\}\cup \{0,1\}. \]
Furthermore, $\{0,1\} \subset \sigma_p(\mb L)$ and the (geometric) eigenspace of the eigenvalue $1$
is one-dimensional and spanned by
\[ \mb u(\xi;1)=\left (\begin{array}{c}1 \\ 2 \end{array} \right ) \]
whereas the (geometric) eigenspace of the eigenvalue $0$ is three-dimensional and spanned by
\[ \mb u_j(\xi;0)=\left ( \begin{array}{c}\xi^j \\ 2\xi^j \end{array} \right ),\quad 
j\in \{1,2,3\}. \]
\end{proposition}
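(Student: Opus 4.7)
The plan is to combine the semigroup reduction of Lemma~\ref{lem:lin} with the ODE analysis of Lemma~\ref{lem:specODE}. By Proposition~\ref{prop:free} we have $\sigma(\mb L_0)\subset\{z\in\C:\Re z\le -\tfrac12\}$, and Lemma~\ref{lem:lin} then guarantees that every $\lambda\in\sigma(\mb L)$ with $\Re\lambda>-\tfrac12$ is an eigenvalue. So I only need to locate $\sigma_p(\mb L)\cap\{\Re z>-\tfrac12\}$ and confirm the stated eigenspaces. The easy half is the direct verification: the vectors $\mb u(\cdot;1)=(1,2)^\top$ and $\mb u_j(\cdot;0)=(\xi^j,2\xi^j)^\top$ are smooth on $\overline B$, hence lie in $\mc D(\tilde{\mb L}_0)\subset\mc D(\mb L)$, and a direct computation (already motivated by the time-translation and Lorentz-boost symmetries discussed before Subsection~4.1) confirms $\mb L\mb u(\cdot;1)=\mb u(\cdot;1)$ and $\mb L\mb u_j(\cdot;0)=\mb 0$.

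To exclude any further eigenvalue, I would take a hypothetical $\lambda\in\sigma_p(\mb L)$ with $\Re\lambda>-\tfrac12$, invoke Lemma~\ref{lem:specODE} to obtain $\ell\in\N_0$ and a nonzero $u\in C^\infty(0,1)\cap H^1_{\mathrm{rad}}(0,1)$ solving \eqref{eq:specl}, and then substitute $u(\rho)=\rho^\ell v(\rho^2)$ with $z=\rho^2$. A direct computation reduces \eqref{eq:specl} to the hypergeometric equation
\[
z(1-z)v''(z)+\bigl[c-(a+b+1)z\bigr]v'(z)-ab\,v(z)=0,
\]
with parameters $a=\tfrac{\ell+\lambda-1}{2}$, $b=\tfrac{\ell+\lambda+4}{2}$, $c=\ell+\tfrac32$, as one checks via $a+b=\ell+\lambda+\tfrac32$ and $4ab=(\ell+\lambda-1)(\ell+\lambda+4)$.

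Next I would do a Frobenius analysis at both endpoints. At $\rho=0$ the indices are $\ell$ and $-\ell-1$, and only the branch $u_0(\rho)=\rho^\ell\,{}_2F_1(a,b,c;\rho^2)$ lies in $H^1_{\mathrm{rad}}$. At $\rho=1$ the indices are $0$ and $-\lambda$; the derivative of a $(1-\rho)^{-\lambda}$-type (or logarithmic, if $\lambda\in\N_0$) solution behaves like $(1-\rho)^{-\lambda-1}$ (respectively $(1-\rho)^{-1}$), so it fails to be square-integrable near $\rho=1$ whenever $\Re\lambda>-\tfrac12$, and the admissible solution there must be (a multiple of) the analytic branch. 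Matching $u_0$ with the analytic solution via the standard hypergeometric connection formula forces the terminating-series condition $a\in-\N_0$ or $b\in-\N_0$. The equation $a=-k$, $k\in\N_0$, yields $\lambda=1-\ell-2k$, and the constraint $\Re\lambda>-\tfrac12$ isolates exactly $(\ell,k)=(0,0)$ and $(\ell,k)=(1,0)$, i.e.\ $\lambda=1$ and $\lambda=0$; the equation $b=-k$ gives $\lambda\le-4$ and is excluded. Finally, the dimensions read off the multiplicity of spherical harmonics: $\ell=0$ contributes a one-dimensional space spanned by $Y_{0,0}$, while $\ell=1$ contributes three dimensions spanned by $Y_{1,-1},Y_{1,0},Y_{1,1}$, which upon recovering $u_2$ from the relation $u_2=\xi^j\partial_j u_1+(\lambda+1)u_1$ yield exactly $\mb u(\cdot;1)$ and the three $\mb u_j(\cdot;0)$.

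The main technical obstacle is the behavior at $\rho=1$ when $\lambda$ is a non-negative integer: there the usual connection formula involving $\Gamma(-\lambda)$ breaks down and logarithmic contributions appear. I would handle this either by invoking the DLMF expansions for integer exponent differences, or, more economically, by noting that any logarithmic branch near $\rho=1$ produces a first derivative of order $(1-\rho)^{-1}$, which is not square-integrable on $(\tfrac12,1)$, so $H^1$-regularity still forces termination of the ${}_2F_1$ series and the quantization conclusion is unchanged. Apart from this delicate case distinction, the argument is a routine but careful Frobenius/connection-formula analysis.
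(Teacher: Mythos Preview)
Your proposal is correct and follows essentially the same approach as the paper's own proof: reduction to the hypergeometric equation via the substitution $u(\rho)=\rho^\ell v(\rho^2)$ with the same parameters $a,b,c$, Frobenius analysis at both endpoints to force linear dependence of the two regular branches, and then the connection formula yielding the quantization $-a\in\N_0$ or $-b\in\N_0$, which isolates $(\ell,\lambda)\in\{(0,1),(1,0)\}$. The paper likewise treats the degenerate integer case (where a logarithmic solution appears at $\rho=1$) by observing that such a branch still fails the $H^1$ condition near $\rho=1$, exactly as you do.
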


\begin{proof}
First of all, it is a simple exercise to check that $\mb L \mb u(\xi,1)=\mb u(\xi;1)$
and $\mb L \mb u_j(\xi; 0)=\mb 0$ for $j=1,2,3$.
Since obviously $\mb u(\cdot;1),\mb u_j(\cdot;0) \in \mc D(\tilde{\mb L}_0)$, this implies
$\{0,1\}\subset \sigma_p(\mb L)$.

In order to prove the first assertion,
let $\lambda \in \sigma(\mb L)$ and assume $\Re\lambda>-\frac12$.
By Proposition \ref{prop:free} we have $\lambda \notin \sigma(\mb L_0)$ and thus,
Lemma \ref{lem:lin} implies $\lambda \in \sigma_p(\mb L)$.
From Lemma \ref{lem:specODE} we infer the existence of a nonzero $u \in C^\infty(0,1)
\cap H^1_{\mathrm{rad}}(0,1)$ satisfying Eq.~\eqref{eq:specl} for $\rho\in (0,1)$.
As before, we reduce Eq.~\eqref{eq:specl} to the hypergeometric differential equation
by setting $u(\rho)=\rho^\ell v(\rho^2)$. This yields
\begin{equation}
\label{eq:hypgeolambda}
 z(1-z)v''(z)+[c-(a+b+1)z]v'(z)-ab v(z)=0 
 \end{equation}
with $a=\frac12(-1+\ell+\lambda)$, $b=\frac12(4+\ell+\lambda)$, $c=\frac32+\ell$, and
$z=\rho^2$.
A fundamental system of Eq.~\eqref{eq:hypgeolambda} is given by 
\footnote{Strictly speaking, this is only true for $c-a-b=-\lambda\not= 0$.
In the case $\lambda=0$ there exists a solution $\tilde \phi_{1,\ell}$ which behaves
like $\log(1-z)$ as $z\to 1-$.}
\begin{align*}
\phi_{1,\ell}(z;\lambda)&={}_2F_1(a,b,a+b+1-c;1-z) \\
\tilde \phi_{1,\ell}(z;\lambda)&=(1-z)^{c-a-b}{}_2F_1(c-a,c-b,c-a-b+1; 1-z)
\end{align*}
and thus, there exist constants $c_\ell(\lambda)$ and $\tilde c_\ell(\lambda)$ such that
\[ v(z)=c_\ell(\lambda)\phi_{1,\ell}(z;\lambda)
+\tilde c_\ell(\lambda)\tilde \phi_{1,\ell}(z;\lambda). \]
The function $\phi_{1,\ell}(z;\lambda)$ is analytic around $z=1$ whereas 
$\tilde \phi_{1,\ell}(z;\lambda)\sim (1-z)^{-\lambda}$ as $z \to 1-$ provided $\lambda \not= 0$.
In the case $\lambda=0$ we have $\tilde \phi_{1,\ell}(z;\lambda)\sim \log(1-z)$ as $z\to 1-$.
Since $u\in H^1_\mathrm{rad}(0,1)$ implies $v\in H^1(\frac12,1)$
and we assume $\Re\lambda>-\frac12$, it follows that $\tilde c_\ell(\lambda)=0$.
Another fundamental system of Eq.~\eqref{eq:hypgeolambda} is given by
\begin{align*} 
\phi_{0,\ell}(z;\lambda)&={}_2F_1(a,b,c;z) \\
\tilde \phi_{0,\ell}(z;\lambda)&=z^{1-c}{}_2F_1(a-c+1,b-c+1,2-c;z)
\end{align*}
and since $\tilde \phi_{0,\ell}(z;\lambda)\sim z^{-\ell-\frac12}$ as $z\to 0+$, we
see that the function $\rho\mapsto \rho^\ell \tilde \phi_{\ell,0}(\rho^2)$
does not belong to $H^1_\mathrm{rad}(0,\frac12)$.
As a consequence, we must have $v(z)=d_\ell(\lambda)\phi_{0,\ell}(z;\lambda)$ for
some suitable $d_\ell(\lambda)\in \C$.
In summary, we conclude that the functions $\phi_{0,\ell}(\cdot;\lambda)$ and 
$\phi_{1,\ell}(\cdot;\lambda)$ are linearly dependent and in view of the connection
formula \cite{DLMF}
\begin{align*} 
\phi_{1,\ell}(z;\lambda)&=\frac{\Gamma(1-c)\Gamma(a+b+1-c)}{\Gamma(a+1-c)\Gamma(b+1-c)}
\phi_{0,\ell}(z;\lambda) \\
&\quad+\frac{\Gamma(c-1)\Gamma(a+b+1-c)}{\Gamma(a)\Gamma(b)}
\tilde \phi_{0,\ell}(z;\lambda) 
\end{align*}
this is possible only if $a$ or $b$ is a pole of the $\Gamma$-function.
This yields $-a\in \N_0$ or $-b\in \N_0$ and thus,
$\tfrac12(1-\ell-\lambda) \in \N_0$ or $-\tfrac12(4+\ell+\lambda)\in \N_0$.
The latter condition is not satisfied for any $\ell \in \N_0$ and the former one
is satisfied only if $(\ell,\lambda)=(0,1)$ or $(\ell,\lambda)=(1,0)$
which proves $\sigma(\mb L)\subset \{z\in \Re z\leq -\frac12\}\cup \{0,1\}$.
Furthermore, the above argument and the derivation in the proof of Lemma \ref{lem:specODE}
also show that the geometric eigenspaces of the eigenvalues $0$ and $1$ are at most
three- and one-dimensional, respectively.
\end{proof}

\begin{remark}
According to the discussion at the beginning of Section \ref{sec:lin}, the two unstable
eigenvalues $1$ and $0$ emerge from the time translation and Lorentz invariance of the
wave equation. 
\end{remark}

\subsection{Spectral projections}

In order to force convergence to the attractor, we need to ``remove'' the eigenvalues
$0$ and $1$ from the spectrum of $\mb L$. 
This is achieved by the spectral projection
\begin{equation}
\label{eq:P} 
\mb P:=\frac{1}{2\pi i}\int_\gamma (z-\mb L)^{-1}dz 
\end{equation}
where the contour $\gamma$ is given by the curve $\gamma(s)=\frac12+\frac34e^{2\pi i s}$, $s \in [0,1]$.
By Proposition \ref{prop:spec} it follows that $\gamma(s)\in \rho(\mb L)$ for all $s\in [0,1]$
and thus, the integral in Eq.~\eqref{eq:P} is well-defined as a Riemann integral over a
continuous function (with values in a Banach space, though).
Furthermore, the contour $\gamma$ encloses the two unstable eigenvalues $0$ and $1$.
The operator $\mb L$ decomposes into two parts 
\begin{align*} 
\mb L_u&: \rg \mb P\cap \mc D(\mb L)\to \mc \rg \mb P, & \mb L_u\mb u&=\mb L\mb u \\
\mb L_s&: \ker \mb P \cap \mc D(\mb L)\to \ker \mb P, &\mb L_s\mb u&=\mb L\mb u
\end{align*}
and for the spectra we have $\sigma(\mb L_u)=\{0,1\}$ as well as $\sigma(\mb L_s)=
\sigma(\mb L)\backslash \{0,1\}$.
We also emphasize the crucial fact that $\mb P$ commutes with the semigroup
$\mb S(\tau)$ and thus, the subspaces $\rg \mb P$ and $\ker \mb P$ of $\mc H$ are invariant
under the linearized flow.
We refer to \cite{Kat95} and \cite{EngNag00} for these standard facts.
However, it is important to keep in mind that $\mb P$ is not an orthogonal projection
since $\mb L$ is not self-adjoint.
Consequently, the following statement on the dimension of $\rg \mb P$ is not trivial.

\begin{lemma}
\label{lem:dimP}
The algebraic multiplicities of the eigenvalues $0, 1 \in \sigma_p(\mb L)$ equal
their geometric multiplicities. In particular, we have $\dim \rg \mb P=4$.
\end{lemma}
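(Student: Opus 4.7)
Plan of proof. Since $\{0,1\}$ is isolated in $\sigma(\mb L)$, the spectral projection $\mb P$ in \eqref{eq:P} gives a decomposition $\mc H=\rg\mb P\oplus\ker\mb P$ with $\rg\mb P$ finite-dimensional and invariant under $\mb L$; the restriction $\mb L_u$ has spectrum $\{0,1\}$, and the total dimension of $\rg\mb P$ equals the sum of the algebraic multiplicities. Because the geometric multiplicities are already known to be $1$ (at $\lambda=1$) and $3$ (at $\lambda=0$) by Proposition \ref{prop:spec}, the claim reduces to showing that $\mb L_u$ has no Jordan block of size $\geq 2$ at either eigenvalue, i.e.\ that for $\lambda_0\in\{0,1\}$ the ascent of $\lambda_0-\mb L$ is $1$. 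Equivalently, I must show that the range of $\lambda_0-\mb L$ (as an operator on $\mc D(\mb L)\subset\mc H$) contains none of the eigenvectors listed in Proposition \ref{prop:spec}.

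Suppose for contradiction that $\mb v$ is a nonzero eigenvector at $\lambda_0$ and there exists $\mb u\in\mc D(\mb L)$ with $(\lambda_0-\mb L)\mb u=\mb v$. I would first use the first component of this equation to solve algebraically $u_2=(\lambda_0+1)u_1+\xi^j\partial_j u_1-v_1$, and substitute into the second component to reduce the system to a single inhomogeneous degenerate elliptic equation for $u_1$, of exactly the form \eqref{eq:degelllambda} but with an inhomogeneity built from $\mb v$. Elliptic regularity (as in the proof of Lemma \ref{lem:DL0}) forces $u_1\in C^\infty(B)$, so I may expand $u_1=\sum_{\ell,m}u_{\ell,m}(\rho)Y_{\ell,m}(\omega)$ as in Lemma \ref{lem:specODE}. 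The inhomogeneity is supported on a single angular momentum sector: $\ell=0$ in the case $\lambda_0=1$ (eigenvector $(1,2)^T$), and $\ell=1$ in the case $\lambda_0=0$ (eigenvectors $(\xi^j,2\xi^j)^T$). For all other $(\ell,m)$ the mode equation is homogeneous and, by Proposition \ref{prop:spec}, admits no nonzero $H^1_{\mathrm{rad}}(0,1)$ solution, so those modes must vanish.

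It remains to analyse the single distinguished mode. In both cases, after the substitution $u_{\ell,m}(\rho)=\rho^\ell v(\rho^2)$, the inhomogeneous ODE becomes a hypergeometric equation with parameters $a=\tfrac12(-1+\ell+\lambda_0)=0$, $b=\tfrac12(4+\ell+\lambda_0)$, $c=\tfrac32+\ell$; in particular $a=0$, so the regular homogeneous solution $\phi_{0,\ell}(\cdot;\lambda_0)$ reduces to the constant $1$, reflecting the presence of the eigenvector. I would solve the inhomogeneous first-order equation for $v'$ explicitly by an integrating factor ($\mu(z)=z^{c}(1-z)^{-c+a+b+1}$), obtaining a one-parameter family of solutions. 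Imposing regularity at $z=0$ (required by $u_1\in H^1(B)$, hence $v\in H^1_{\mathrm{loc}}([0,1))$) pins down the free constant, and one then reads off the behaviour of $v(z)$ as $z\to 1-$: it acquires either a pole $(1-z)^{-1}$ (as a direct computation for $\lambda_0=1,\ell=0$ suggests) or a logarithm $\log(1-z)$ coming from the second solution $\tilde\phi_{1,\ell}$ in \eqref{eq:tildephi}, both incompatible with $u_1\in H^1(B)$ near $\partial B$. This contradiction shows the ascent is $1$ and hence $\dim\rg\mb P=1+3=4$.

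The principal obstacle is the final step: verifying that the coefficient of the obstructing singular profile at $z=1$ is actually nonzero for the specific inhomogeneity coming from the eigenvector. This reduces, via the connection formula used in the proof of Proposition \ref{prop:spec}, to showing that a ratio of $\Gamma$-values is finite and nonzero at the parameters $a=0$, $b=(4+\ell+\lambda_0)/2$, $c=3/2+\ell$, which is a direct check; the routine but bookkeeping-heavy part is keeping track of the constants through the integrating-factor computation to confirm the obstruction does not accidentally cancel.
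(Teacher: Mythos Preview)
Your overall strategy---reduce to an inhomogeneous ODE in the relevant angular momentum sector and show that no $H^1_{\mathrm{rad}}$ solution exists---is exactly the paper's approach. There is, however, one genuine gap and one point where your plan diverges from (and is arguably cleaner than) the paper.

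\textbf{The gap.} You assert that $\rg\mb P$ is finite-dimensional without proof. This is not automatic: for an isolated spectral point of a closed operator the Riesz projection can have infinite rank (e.g.\ the Volterra operator has spectrum $\{0\}$ and trivial kernel, yet the projection is the identity). If $\rg\mb P_n$ were infinite-dimensional, showing ascent $1$ would not give $\dim\rg\mb P_n=\dim\ker(n-\mb L)$. The paper fills this by observing that $\mb L'$ is compact, so the essential spectrum of $\mb L$ equals that of $\mb L_0$ and lies in $\{\Re z\le -\tfrac12\}$; hence $0,1$ are discrete eigenvalues of finite algebraic multiplicity. You should insert this argument (it follows from what is already proved in Lemma~\ref{lem:lin} and Proposition~\ref{prop:free}).

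\textbf{The ODE step.} Here you deviate slightly from the paper. The paper stays in the $\rho$-variable and uses variation of constants with the explicit pair $\psi(\rho)=\rho$ (resp.\ $\psi=1$) and the second solution $\tilde\psi$; the obstruction is that the integral $\int_0^1 \psi(s)W(s)^{-1}g(s)(1-s^2)^{-1}\,ds$ does not vanish (for $\lambda_0=0$ this is literally $\int_0^1 s^4\,ds/d\ne 0$). Your route---exploit $a=0$ to reduce the hypergeometric equation to a first-order ODE for $w=v'$ and solve by the integrating factor $\mu(z)=z^c(1-z)^{b+1-c}$---is a nice alternative and works: one finds $w(z)=-\tfrac{1}{10}(1-z)^{-1}$ for $(\lambda_0,\ell)=(0,1)$ and $w(z)=(-\tfrac56+\tfrac z2)(1-z)^{-2}$ for $(\lambda_0,\ell)=(1,0)$ once regularity at $z=0$ forces the homogeneous constant to zero, giving the claimed $\log(1-z)$ and $(1-z)^{-1}$ behaviour for $v$. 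Your remark that the non-cancellation ``reduces to a ratio of $\Gamma$-values via the connection formula'' is off the mark, though: the connection formula concerns the \emph{homogeneous} solutions, whereas the obstruction here depends on the inhomogeneity. In practice the check is completely elementary (the constants $-\tfrac{1}{10}$ and $-\tfrac13$ above are visibly nonzero), as the paper's one-line computation shows.
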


\begin{proof}
We define the two spectral projections $\mb P_0$ and $\mb P_1$ by
\[ \mb P_n=\frac{1}{2\pi i}\int_{\gamma_n}(z-\mb L)^{-1}dz,\quad n \in \{0,1\} \]
where $\gamma_0(s)=\frac12 e^{2\pi i s}$ and $\gamma_1(s)=1+\frac12 e^{2\pi i s}$
for $s\in [0,1]$.
Note that $\mb P=\mb P_0+\mb P_1$ and $\mb P_0\mb P_1=\mb P_1 \mb P_0=\mb 0$, see \cite{Kat95}.
By definition, the algebraic multiplicity of the eigenvalue $n \in \sigma_p(\mb L)$ equals
$\dim \rg \mb P_n$.
First, we exclude the possibility $\dim \rg \mb P_n=\infty$.
Suppose this is true. Then $n$ belongs to the essential spectrum of $\mb L$, i.e.,
$n-\mb L$ fails to be semi-Fredholm (\cite{Kat95}, p.~239, Theorem 5.28).
Since the essential spectrum is invariant under compact perturbations, see \cite{Kat95},
p.~244, Theorem 5.35,
we infer $n \in \sigma(\mb L_0)$ which contradicts the spectral statement in Proposition \ref{prop:free}.
Consequently, $\dim \rg \mb P_n<\infty$.
We conclude that the operators $\mb L_{(n)}:=\mb L|_{\rg \mb P_n \cap \mc D(\mb L)}$ are in fact 
finite-dimensional and $\sigma(\mb L_{(n)})=\{n\}$.
This implies that $n-\mb L_{(n)}$ is nilpotent and thus, there exist
$m_n \in \N$ such that $(n-\mb L_{(n)})^{m_n}=\mb 0$.
We assume $m_n$ to be minimal with this property.
If $m_n=1$ we are done.
Thus, assume $m_n\geq 2$.
We first consider $\mb L_{(0)}$.
Since $\ker \mb L$ is spanned by $\{\mb u_j(\cdot; 0): j=1,2,3\}$ by Proposition 
\ref{prop:spec}, it follows that there exists a $\mb u \in \rg \mb P_0 \cap \mc D(\mb L)$ 
and constants $c_1,c_2,c_3 \in \C$, not all of them zero, such
that 
\[ \mb L_{(0)}\mb u(\xi)=\mb L\mb u(\xi)=\sum_{j=1}^3 c_j \mb u_j(\xi; 0)=
\left (\begin{array}{c}c_j\xi^j \\ 2c_j\xi^j \end{array} \right ). \]
This implies $u_2(\xi)=\xi^j \partial_j u_1(\xi)+u_1(\xi)+c_j \xi^j$ and thus, 
\begin{align*} 
-(\delta^{jk}-\xi^j \xi^k)\partial_j \partial_k u_1(\xi) +4\xi^j \partial_j u_1(\xi)
-4u_1(\xi)&=-5c_j\xi^j \\
&=|\xi|\sum_{m=-1}^1 \tilde c_m Y_{1,m}(\tfrac{\xi}{|\xi|}).
\end{align*}
As before in the proof of Lemma \ref{lem:specODE}, 
we expand $u_1$ as
\[ u_1(\xi)=\sum_{\ell,m}^\infty u_{\ell,m}(|\xi|)Y_{\ell,m}(\tfrac{\xi}{|\xi|}) \]
and find
\begin{equation}
\label{eq:projode}
 -(1-\rho^2)u_{1,m}''(\rho)-\tfrac{2}{\rho}u_{1,m}'(\rho)+4\rho
u_{1,m}'(\rho)-4u_{1,m}(\rho)+\tfrac{2}{\rho^2}u_{1,m}(\rho)=\tilde c_m \rho. 
\end{equation}
For at least one $m \in \{-1,0,1\}$ we have $\tilde c_m\not= 0$ and by normalizing
$u_{1,m}$ accordingly, we may assume $\tilde c_m=1$.
Of course, Eq.~\eqref{eq:projode} with $\tilde c_m=0$ is nothing but the spectral equation
\eqref{eq:specl} with $\ell=1$ and $\lambda=0$.
An explicit solution is therefore given by $\psi(\rho)=\rho$ which may of course
also be easily checked directly.
Another solution is $\tilde \psi(\rho):=\tilde \psi_{0,1}(\rho;0)=\rho \tilde \phi_{0,1}(\rho^2; 0)$ where
$\tilde \phi_{1,0}(\cdot; 0)$ is the hypergeometric function from the proof of
Lemma \ref{prop:spec}.
We have the asymptotic behavior $\tilde \psi(\rho)\sim \rho^{-2}$ as $\rho \to 0+$
and $|\tilde \psi(\rho)|\simeq |\log(1-\rho)|$ as $\rho \to 1-$.
By the variation of constants formula we infer that $u_{1,m}$ must be of the form
\begin{align} 
\label{eq:u1mvdkf}
u_{1,m}(\rho)=&c \psi(\rho)+\tilde c \tilde \psi(\rho)
+\psi(\rho)\int_{\rho_0}^\rho \frac{\tilde \psi(s)}{W(s)}\frac{s}{1-s^2} ds \nonumber \\
&-\tilde \psi(\rho)\int_{\rho_1}^\rho \frac{\psi(s)}{W(s)}\frac{s}{1-s^2} ds 
\end{align}
for suitable constants $c,\tilde c\in \C$, $\rho_0,\rho_1 \in [0,1]$ and
\[ W(\rho)=W(\psi,\tilde \psi)(\rho)=\frac{d}{\rho^2(1-\rho^2)} \]
where $d \in \R\backslash \{0\}$.
Recall that $u_1 \in H^1(B)$ implies $u_{1,m}\in H^1_{\mathrm{rad}}(0,1)$ and by considering
the behavior of \eqref{eq:u1mvdkf} as $\rho \to 0+$, we see that necessarily 
\[ \tilde c=\int_{\rho_1}^0 \frac{\psi(s)}{W(s)}\frac{s}{1-s^2}ds \]
which leaves us with
\[ u_{1,m}(\rho)=c\psi(\rho)+\psi(\rho)\int_{\rho_0}^\rho \frac{\tilde \psi(s)}{W(s)}\frac{s}{1-s^2}ds
-\tilde \psi(\rho)\int_0^\rho \frac{\psi(s)}{W(s)}\frac{s}{1-s^2}ds. \]
Next, we consider the behavior as $\rho \to 1-$.
Since 
\[ \left |\int_{\rho_0}^\rho \frac{\tilde \psi(s)}{W(s)}\frac{s}{1-s^2}ds \right |\lesssim 1 \]
for all $\rho \in (0,1)$ and $\tilde \psi \notin H^1_\mathrm{rad}(\frac12,1)$, we must
have
\[ \lim_{\rho\to 1-}\int_0^\rho \frac{\psi(s)}{W(s)}\frac{s}{1-s^2}ds=0. \]
This, however, is impossible since $\frac{\psi(s)}{W(s)}\frac{s}{1-s^2}=\frac{1}{d}s^4$. 
Thus, we arrive at a contradiction and our initial assumption $m_0\geq 2$ must be wrong.
Consequently, from Proposition \ref{prop:spec} we infer
$\dim \rg \mb P_0=\dim \ker \mb L=3$ as claimed.
By exactly the same type of argument one proves that $\dim \rg \mb P_1=1$.
\end{proof}

\subsection{Resolvent estimates}

Our next goal is to obtain existence of the resolvent $\mb R_\mb L(\lambda) \in \mc B(\mc H)$ for
$\lambda \in H_{-\frac12+\epsilon}:=\{z\in \C: \Re z\geq -\frac12+\epsilon\}$ and $|\lambda|$
large.

\begin{lemma}
\label{lem:res}
Fix $\epsilon>0$. Then there exists a constant $C>0$ such that $\mb R_{\mb L}(\lambda)$
exists as a bounded operator on $\mc H$ for all $\lambda \in H_{-\frac12+\epsilon}$
with $|\lambda|>C$.
\end{lemma}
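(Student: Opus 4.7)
The plan is to reduce the question to a perturbative analysis around the resolvent of $\mb L_0$. By Proposition \ref{prop:free} combined with the Hille--Yosida characterization, the estimate $\|\mb S_0(\tau)\|\leq e^{-\tau/2}$ yields $H_{-\frac12+\epsilon}\subset \rho(\mb L_0)$ together with the quantitative bound $\|\mb R_{\mb L_0}(\lambda)\|\leq \tfrac{1}{\epsilon}$ for $\lambda\in H_{-\frac12+\epsilon}$. Since $\mb L=\mb L_0+\mb L'$, I would factor
\[
\lambda-\mb L=(\lambda-\mb L_0)\bigl(1-\mb R_{\mb L_0}(\lambda)\mb L'\bigr),
\]
so that by a Neumann series argument it suffices to establish $\|\mb R_{\mb L_0}(\lambda)\mb L'\|<1$ for $|\lambda|>C$. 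After that, one reads off $\mb R_{\mb L}(\lambda)=\bigl(1-\mb R_{\mb L_0}(\lambda)\mb L'\bigr)^{-1}\mb R_{\mb L_0}(\lambda)\in \mc B(\mc H)$.

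To bound the composition, I would exploit two ingredients. First, the operator $\mb L'\in \mc B(\mc H)$ is compact, as already observed in the proof of Lemma \ref{lem:lin}: it factors through the compact Rellich embedding $H^1(B)\hookrightarrow L^2(B)$ (combined with Lemma \ref{lem:equivH1}). Second, the standard identity $\lambda\mb R_{\mb L_0}(\lambda)\mb u=\mb u+\mb R_{\mb L_0}(\lambda)\mb L_0\mb u$ for $\mb u\in \mc D(\mb L_0)$ gives pointwise decay
\[
\|\mb R_{\mb L_0}(\lambda)\mb u\|\leq \tfrac{1}{|\lambda|}\bigl(\|\mb u\|+\tfrac{1}{\epsilon}\|\mb L_0\mb u\|\bigr),\qquad \mb u\in \mc D(\mb L_0),
\]
uniformly in $\lambda\in H_{-\frac12+\epsilon}$.

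I would then upgrade pointwise smallness to norm smallness of $\mb R_{\mb L_0}(\lambda)\mb L'$ by a standard $\delta$-net argument. The image $K:=\overline{\mb L'(\mc B_1(0))}$ is compact in $\mc H$; given $\delta>0$, fix a finite $\delta$-net $\{\mb x_1,\dots,\mb x_N\}\subset K$, and by density of $\mc D(\mb L_0)\subset \mc H$ pick $\mb y_i\in \mc D(\mb L_0)$ with $\|\mb x_i-\mb y_i\|<\delta$. For any $\mb u\in \mc B_1(0)$, choosing $i$ with $\|\mb L'\mb u-\mb x_i\|<\delta$, the triangle inequality yields
\[
\|\mb R_{\mb L_0}(\lambda)\mb L'\mb u\|\leq \tfrac{2\delta}{\epsilon}+\tfrac{1}{|\lambda|}\bigl(\|\mb y_i\|+\tfrac{1}{\epsilon}\|\mb L_0\mb y_i\|\bigr).
\]
Choosing $\delta$ small (e.g.\ $\delta<\epsilon/8$) and then $|\lambda|>C$ large depending on the finite family $\{\mb y_1,\dots,\mb y_N\}$, one gets $\|\mb R_{\mb L_0}(\lambda)\mb L'\|<\frac12$, and the Neumann series closes.

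The main obstacle is the passage from pointwise decay to norm decay: without the compactness of $\mb L'$ one cannot expect $\|\mb R_{\mb L_0}(\lambda)\|$ itself to decay along vertical lines $\Re\lambda=\mathrm{const}$. Compactness of $\mb L'$ is precisely what converts the individual gain $O(|\lambda|^{-1})$ on the dense subspace $\mc D(\mb L_0)$ into a uniform gain on the relatively compact set $\mb L'(\mc B_1(0))$, which is the only nontrivial analytic content of the argument.
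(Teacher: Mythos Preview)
Your argument is correct and complete, but it takes a different route from the paper. The paper factors on the other side, writing $\mb R_{\mb L}(\lambda)=\mb R_{\mb L_0}(\lambda)\bigl[1-\mb L'\mb R_{\mb L_0}(\lambda)\bigr]^{-1}$, and then bounds $\|\mb L'\mb R_{\mb L_0}(\lambda)\|$ directly by exploiting the explicit structure of $\mb L'$: since $\mb L'\mb f=(0,6f_1)^t$, one only needs control of $\|[\mb R_{\mb L_0}(\lambda)\mb f]_1\|_{L^2(B)}$. Reading off the first component of $(\lambda-\mb L_0)\mb u=\mb f$ gives $(\lambda+1)u_1=-\xi^j\partial_j u_1+u_2+f_1$, hence $\|u_1\|_{L^2(B)}\lesssim |\lambda+1|^{-1}\|\mb f\|$ and therefore the explicit rate $\|\mb L'\mb R_{\mb L_0}(\lambda)\|\lesssim|\lambda+1|^{-1}$. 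Your approach trades this concrete computation for an abstract compactness-plus-strong-convergence argument via a $\delta$-net; it is more general (it works for any compact perturbation of a $C_0$-generator with a half-plane resolvent bound) and avoids looking inside the operator, while the paper's approach is shorter and yields a quantitative decay rate once one spots the first-component trick. Either method suffices for the subsequent application of Gearhart--Pr\"uss--Greiner, since both give a uniform resolvent bound for $|\lambda|>C$.
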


\begin{proof}
From Proposition \ref{prop:free} we know that $\mb R_{\mb L_0}(\lambda) \in \mc B(\mc H)$
for all $\lambda \in H_{-\frac12+\epsilon}$ with the bound
\[ \|\mb R_{\mb L_0}(\lambda)\|\leq \frac{1}{\Re \lambda+\frac12}, \]
see \cite{EngNag00}, p.~55, Theorem 1.10.
Furthermore, recall the identity $\mb R_{\mb L}(\lambda)=\mb R_{\mb L_0}(\lambda)[1-\mb L'
\mb R_{\mb L_0}(\lambda)]^{-1}$.
By definition of $\mb L'$ we have
\[ \mb L'\mb R_{\mb L_0}(\lambda)\mb f=\left (\begin{array}{c}
0 \\ 6[\mb R_{\mb L_0}(\lambda)\mb f]_1 \end{array} \right ) \]
where we use the notation $[\mb g]_k$ for the $k$-th component of the vector $\mb g$.
Set $\mb u=\mb R_{\mb L_0}(\lambda)\mb f$ for a given $\mb f\in \mc H$.
Then we have $\mb u\in \mc D(\mb L_0)$ and $(\lambda-\mb L_0)\mb u=\mb f$ which implies
$u_2(\xi)=\xi^j \partial_j u_1(\xi)+(\lambda+1)u_1(\xi)-f_1(\xi)$, or, equivalently,
\[ [\mb R_{\mb L_0}(\lambda)\mb f]_1(\xi)=\frac{1}{\lambda+1}\Big [
-\xi^j \partial_j [\mb R_{\mb L_0}(\lambda)\mb f]_1(\xi) 
+[\mb R_{\mb L_0}(\lambda)\mb f]_2(\xi)+f_1(\xi)
\Big ]. \]
Consequently, we infer
\begin{align*} 
\|[\mb R_{\mb L_0}(\lambda)\mb f]_1\|_{L^2(B)}&\lesssim \frac{1}{|\lambda+1|}
\Big [
\|[\mb R_{\mb L_0}(\lambda)\mb f]_1\|_{H^1(B)}+\|[\mb R_{\mb L_0}(\lambda)\mb f]_2\|_{L^2(B)}\\
&\quad +\|f_1\|_{L^2(B)} \Big ] \\
&\lesssim \frac{\|\mb f\|}{|\lambda+1|}
\end{align*}
which yields $\|\mb L' \mb R_{\mb L_0}(\lambda)\|\lesssim \frac{1}{|\lambda+1|}$
for all $\lambda \in H_{-\frac12+\epsilon}$.
We conclude that the Neumann series
\[ [1-\mb L'\mb R_{\mb L_0}(\lambda)]^{-1}=\sum_{k=0}^\infty [\mb L' \mb R_{\mb L_0}(\lambda)]^k \]
converges in norm provided $|\lambda|$ is sufficiently large.
This yields the desired result.
\end{proof}

\subsection{Estimates for the linearized evolution}

Finally, we obtain improved growth estimates for the semigroup $\mb S$ from Lemma \ref{lem:lin}
which governs the linearized evolution.

\begin{proposition}
\label{prop:lin}
Fix $\epsilon>0$.
Then the semigroup $\mb S$ from Lemma \ref{lem:lin} satisfies the estimates
\begin{align*} 
\|\mb S(\tau)(1-\mb P)\mb f\|&\leq Ce^{(-\frac12+\epsilon)\tau}\|(1-\mb P)\mb f\| \\
\|\mb S(\tau)\mb P\mb f\|&\leq Ce^\tau \|\mb P\mb f\|
\end{align*}
for all $\tau\geq 0$ and $\mb f\in \mc H$.
\end{proposition}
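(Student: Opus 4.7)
The plan is to exploit the invariant decomposition $\mc H=\rg \mb P\oplus \ker \mb P$ afforded by the spectral projection $\mb P$, which commutes with $\mb S(\tau)$. On each invariant subspace the operator $\mb L$ restricts to a part $\mb L_u$ (on $\rg \mb P$) and $\mb L_s$ (on $\ker \mb P$), and I will estimate the corresponding semigroups $\mb S(\tau)\mb P=e^{\tau\mb L_u}\mb P$ and $\mb S(\tau)(1-\mb P)=e^{\tau\mb L_s}(1-\mb P)$ separately.

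For the unstable part, Lemma \ref{lem:dimP} gives $\dim \rg\mb P=4$, so $\mb L_u$ is a bounded operator on a finite-dimensional space with $\sigma(\mb L_u)=\{0,1\}$. Moreover, the equality of algebraic and geometric multiplicities established in Lemma \ref{lem:dimP}, together with the explicit description of the eigenspaces in Proposition \ref{prop:spec}, shows that $\mb L_u$ is diagonalizable. Consequently $e^{\tau\mb L_u}$ is conjugate to $\mathrm{diag}(e^\tau,1,1,1)$, which yields $\|\mb S(\tau)\mb P\mb f\|\leq Ce^\tau \|\mb P\mb f\|$.

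For the stable part, the strategy is to apply the Gearhart--Prüss theorem for $C_0$-semigroups on Hilbert spaces (see \cite{EngNag00}, Theorem V.1.11), which reduces the growth bound of $\mb S_s(\tau):=\mb S(\tau)(1-\mb P)$ to a uniform resolvent estimate on vertical lines. By construction $\sigma(\mb L_s)=\sigma(\mb L)\setminus\{0,1\}\subset \{\Re z\leq -\tfrac12\}$, so for any $\epsilon\in (0,\tfrac12)$ the line $\ell_\epsilon:=\{\lambda\in\C:\Re\lambda=-\tfrac12+\epsilon\}$ lies in $\rho(\mb L_s)$, and $\mb R_{\mb L_s}(\lambda)$ coincides with the restriction of $\mb R_{\mb L}(\lambda)$ to $\ker\mb P$. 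I therefore need to show
\[ M_\epsilon:=\sup_{\lambda\in \ell_\epsilon}\|\mb R_{\mb L}(\lambda)\|_{\mc B(\mc H)}<\infty. \]
Lemma \ref{lem:res} provides a constant $C_\epsilon$ such that $\mb R_{\mb L}(\lambda)$ exists and is uniformly bounded on $\ell_\epsilon\cap \{|\lambda|>C_\epsilon\}$. On the complementary compact arc $\ell_\epsilon\cap\{|\lambda|\leq C_\epsilon\}$, the map $\lambda\mapsto \mb R_{\mb L}(\lambda)$ is continuous with values in $\mc B(\mc H)$ (since this arc is disjoint from $\sigma(\mb L)\subset\{\Re z\leq-\tfrac12\}\cup\{0,1\}$), hence bounded by compactness. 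Combining these two bounds yields $M_\epsilon<\infty$, and Gearhart--Prüss then gives $\|\mb S_s(\tau)\|\leq C e^{(-\frac12+\epsilon)\tau}$, i.e.\ the claimed decay estimate.

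The only non-routine step is verifying the uniform resolvent bound along the vertical line, and this is precisely the content of Lemma \ref{lem:res} together with a compactness argument for the bounded portion; once this is in hand, the rest of the proof is a direct application of Gearhart--Prüss and elementary linear algebra on the four-dimensional unstable subspace.
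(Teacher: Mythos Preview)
Your proof is correct and follows essentially the same approach as the paper: Gearhart--Pr\"uss on the stable subspace (using Lemma~\ref{lem:res} for large $|\lambda|$ and compactness for the bounded region), together with the explicit eigenstructure on the four-dimensional unstable subspace from Proposition~\ref{prop:spec} and Lemma~\ref{lem:dimP}. The only cosmetic difference is that you phrase the resolvent bound on the vertical line $\ell_\epsilon$ whereas the paper (and the usual hypothesis of Gearhart--Pr\"uss in the form $\omega_0=s_0$) uses the full half-plane $H_{-\frac12+\epsilon}$; since Lemma~\ref{lem:res} already gives the uniform bound on $H_{-\frac12+\epsilon}\cap\{|\lambda|>C_\epsilon\}$ and the remaining set $H_{-\frac12+\epsilon}\cap\{|\lambda|\le C_\epsilon\}$ is compact in $\rho(\mb L_s)$, your compactness argument extends verbatim to the half-plane, so this is not a gap.
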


\begin{proof}
The operator $\mb L_s$ is the generator of the subspace semigroup $\mb S_s$ defined by
$\mb S_s(\tau):=\mb S(\tau)|_{\ker \mb P}$.
We have $\sigma(\mb L_s)\subset \{z\in \C: \Re z\leq -\frac12\}$
and the resolvent $\mb R_{\mb L_s}(\lambda)$ is the restriction of 
$\mb R_{\mb L}(\lambda)$ to $\ker \mb P$.
Consequently, by Lemma \ref{lem:res} we infer
$\|\mb R_{\mb L_s}(\lambda)\|\lesssim 1$ for all $\lambda \in H_{-\frac12+\epsilon}$ and
thus, the Gearhart-Pr\"uss-Greiner Theorem (see \cite{EngNag00}, p.~302, Theorem 1.11)
yields the semigroup decay
$\|\mb S_s(\tau)\|\lesssim e^{(-\frac12+\epsilon)\tau}$.
The estimate for $\mb S(\tau)\mb P$ follows from the fact that $\rg \mb P$ is spanned
by eigenfunctions of $\mb L$ with eigenvalues $0$ and $1$ (Proposition \ref{prop:spec}
and Lemma \ref{lem:dimP}).
\end{proof}

\section{Nonlinear perturbation theory}

In this section we consider the full problem Eq.~\eqref{eq:css}, 
\begin{align}
\label{eq:css1}
\partial_0 \phi_1&=-\xi^j \partial_j \phi_1-\phi_1+\phi_2 \nonumber \\
\partial_0 \phi_2&=\partial_j \partial^j \phi_1-\xi^j \partial_j \phi_2-2\phi_2
+6\phi_1+3\sqrt{2}\phi_1^2+\phi_1^3
\end{align}
with prescribed initial data at $\tau=0$.
An operator formulation of Eq.~\eqref{eq:css1} is obtained by defining the nonlinearity
\[
\mb N(\mb u):=\left (\begin{array}{c}
0 \\ 3\sqrt 2 u_1^2+u_1^3 \end{array} \right ).
\]
It is an immediate consequence of the Sobolev embedding $H^1(B)\hookrightarrow L^p(B)$,
$p \in [1,6]$,
that $\mb N: \mc H \to \mc H$ and we have the estimate
\begin{equation}
\label{eq:LipN} 
\|\mb N(\mb u)-\mb N(\mb v)\|\lesssim \|\mb u-\mb v\|(\|\mb u\|+\|\mb v\|) 
\end{equation}
for all $\mb u, \mb v \in \mc H$ with $\|\mb u\|, \|\mb v\|\leq 1$.
The Cauchy problem for Eq.~\eqref{eq:css1} is formally equivalent to
\begin{align}
\label{eq:cssop}
\tfrac{d}{d\tau}\Phi(\tau)&=\mb L\Phi(\tau)+\mb N(\Phi(\tau)) \nonumber \\
\Phi(0)&=\mb u
\end{align}
for a strongly differentiable function $\Phi: [0,\infty)\to \mc H$
where $\mb u$ are the prescribed data.
In fact, we shall consider the weak version of Eq.~\eqref{eq:cssop} which reads
\begin{equation}
\label{eq:cssopweak}
\Phi(\tau)=\mb S(\tau)\mb u+\int_0^\tau \mb S(\tau-\sigma)\mb N(\Phi(\sigma))d\sigma.
\end{equation}
Since the semigroup $\mb S$ is unstable, one cannot expect to obtain a global solution of
Eq.~\eqref{eq:cssopweak} for general data $\mb u\in \mc H$.
However, on the subspace $\ker \mb P$, the semigroup $\mb S$ is stable (Proposition
\ref{prop:lin}). 
In order to isolate the instability in the nonlinear context, we formally project
Eq.~\eqref{eq:cssopweak} to the unstable subspace $\rg \mb P$ which yields
\[ \mb P \Phi(\tau)=\mb S(\tau)\mb P \mb u+\int_0^\tau 
\mb S(\tau-\sigma) \mb P\mb N(\Phi(\sigma))d\sigma. \]
This suggests to subtract the ``bad'' term
\[ \mb S(\tau)\mb P\mb u+\int_0^\infty \mb S(\tau-\sigma)\mb P\mb N(\Phi(\sigma))d\sigma \]
from Eq.~\eqref{eq:cssopweak} in order to force decay.
We obtain the equation
\begin{equation}
\label{eq:cssopweakmod}
\Phi(\tau)=\mb S(\tau)(1-\mb P)\mb u+\int_0^\tau \mb S(\tau-\sigma)\mb N(\Phi(\sigma))d\sigma
-\int_0^\infty \mb S(\tau-\sigma)\mb P\mb N(\Phi(\sigma))d\sigma.
\end{equation}
First, we solve Eq.~\eqref{eq:cssopweakmod} and then we relate solutions of 
Eq.~\eqref{eq:cssopweakmod} to solutions of Eq.~\eqref{eq:cssopweak}.

\subsection{Solution of the modified equation}

We solve Eq.~\eqref{eq:cssopweakmod} by a fixed point argument.
To this end we define 
\begin{align*} 
\mb K_{\mb u}(\Phi)(\tau):=&\mb S(\tau)(1-\mb P)\mb u
+\int_0^\tau \mb S(\tau-\sigma)\mb N(\Phi(\sigma))d\sigma \\
&-\int_0^\infty \mb S(\tau-\sigma)\mb P\mb N(\Phi(\sigma))d\sigma.
\end{align*}
and show that $\mb K_\mb u$ defines a contraction mapping on (a closed subset of) 
the Banach space $\mc X$, given by
\[ \mc X:=\left \{\Phi \in C([0,\infty),\mc H): \sup_{\tau>0} e^{(\frac12-\epsilon)\tau}
\|\Phi(\tau)\|<\infty \right \} \]
with norm
\[ \|\Phi\|_{\mc X}:=\sup_{\tau>0} e^{(\frac12-\epsilon)\tau}
\|\Phi(\tau)\| \]
where $\epsilon \in (0,\frac12)$ is arbitrary but fixed.
We further write
\[ \mc X_\delta:=\{\Phi \in \mc X: \|\Phi\|_{\mc X}\leq \delta\} \]
for the closed ball of radius $\delta>0$ in $\mc X$.

\begin{proposition}
\label{prop:K}
Let $\delta>0$ be sufficiently small and suppose $\mb u \in \mc H$ with $\|\mb u\|<\delta^2$.
Then $\mb K_{\mb u}$ maps $\mc X_\delta$ to $\mc X_\delta$ and we have the estimate
\[ \|\mb K_{\mb u}(\Phi)-\mb K_{\mb u}(\Psi)\|_{\mc X}\leq C\delta \|\Phi-\Psi\|_{\mc X} \]
for all $\Phi, \Psi \in \mc X_\delta$.
\end{proposition}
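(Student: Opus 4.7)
The plan is to prove Proposition \ref{prop:K} by a standard Banach fixed-point argument, using the spectral splitting $\mc H = \ker \mb P \oplus \rg \mb P$ to handle the unstable directions. The key observation is that, since $\mb S(\tau)$ commutes with $\mb P$, one can combine the two integrals in $\mb K_\mb u$ and rewrite
\begin{equation*}
\mb K_\mb u(\Phi)(\tau) = \mb S(\tau)(1-\mb P)\mb u + \int_0^\tau \mb S(\tau-\sigma)(1-\mb P)\mb N(\Phi(\sigma))d\sigma - \int_\tau^\infty \mb S(\tau-\sigma)\mb P\mb N(\Phi(\sigma))d\sigma,
\end{equation*}
so the unstable Duhamel integral has been shifted to the tail $[\tau,\infty)$. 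Because $\rg \mb P$ is four-dimensional with $\sigma(\mb L|_{\rg \mb P})=\{0,1\}$ and both eigenvalues are semisimple (Lemma \ref{lem:dimP}), the restriction of $\mb S(\cdot)\mb P$ extends to a group satisfying $\|\mb S(t)\mb P \mb f\|\lesssim \|\mb P\mb f\|$ uniformly for $t\leq 0$, so the tail integral is well-defined.

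For the self-mapping, I would estimate each of the three terms separately in the $\mc X$-norm. Proposition \ref{prop:lin} combined with $\|\mb u\|<\delta^2$ immediately gives $\|\mb S(\tau)(1-\mb P)\mb u\| \lesssim \delta^2 e^{(-\frac12+\epsilon)\tau}$. For $\Phi \in \mc X_\delta$ and $\delta$ small, $\|\Phi(\sigma)\|\leq \delta e^{-(\frac12-\epsilon)\sigma}<1$, so the Lipschitz bound \eqref{eq:LipN} yields $\|\mb N(\Phi(\sigma))\|\lesssim \delta^2 e^{-(1-2\epsilon)\sigma}$. Combining this with the stable and unstable semigroup bounds gives
\begin{align*}
\Big\|\int_0^\tau \mb S(\tau-\sigma)(1-\mb P)\mb N(\Phi(\sigma))d\sigma\Big\| &\lesssim \delta^2 \int_0^\tau e^{(-\frac12+\epsilon)(\tau-\sigma)} e^{-(1-2\epsilon)\sigma}d\sigma \lesssim \delta^2 e^{(-\frac12+\epsilon)\tau}, \\
\Big\|\int_\tau^\infty \mb S(\tau-\sigma)\mb P\mb N(\Phi(\sigma))d\sigma\Big\| &\lesssim \delta^2 \int_\tau^\infty e^{-(1-2\epsilon)\sigma} d\sigma \lesssim \delta^2 e^{-(\frac12-\epsilon)\tau}.
\end{align*}
Multiplying by $e^{(\frac12-\epsilon)\tau}$ and taking the supremum yields $\|\mb K_\mb u(\Phi)\|_\mc X \leq C \delta^2 \leq \delta$ once $\delta$ is small enough.

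The contraction estimate is obtained by applying exactly the same three-term decomposition to $\mb K_\mb u(\Phi)-\mb K_\mb u(\Psi)$: the linear term drops out, and \eqref{eq:LipN} gives
\begin{equation*}
\|\mb N(\Phi(\sigma))-\mb N(\Psi(\sigma))\|\lesssim \|\Phi(\sigma)-\Psi(\sigma)\|(\|\Phi(\sigma)\|+\|\Psi(\sigma)\|)\lesssim \delta e^{-(1-2\epsilon)\sigma}\|\Phi-\Psi\|_\mc X,
\end{equation*}
so the same two integrals produce a factor $\delta$ in place of $\delta^2$, giving $\|\mb K_\mb u(\Phi)-\mb K_\mb u(\Psi)\|_\mc X \lesssim \delta \|\Phi-\Psi\|_\mc X$. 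The main subtlety is the tail integral $\int_\tau^\infty \mb S(\tau-\sigma)\mb P\mb N(\Phi(\sigma))d\sigma$: on $\rg \mb P$ the semigroup is merely bounded (not decaying) for negative times, so absolute convergence of this integral rests entirely on the quadratic suppression $\|\mb N(\Phi(\sigma))\|\lesssim \|\Phi(\sigma)\|^2$ inherited from \eqref{eq:LipN}. This quadratic scaling against the linear decay rate built into $\mc X$ is precisely what forces the smallness condition $\|\mb u\|<\delta^2$ relative to $\|\Phi\|_\mc X\leq \delta$ and makes the fixed-point scheme close.
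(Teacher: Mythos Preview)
Your proof is correct and follows essentially the same approach as the paper: both split $\mb K_{\mb u}(\Phi)$ into its $\ker\mb P$ and $\rg\mb P$ components, obtaining exactly the tail integral $-\int_\tau^\infty \mb S(\tau-\sigma)\mb P\mb N(\Phi(\sigma))\,d\sigma$ on the unstable part, and then estimate each piece using Proposition~\ref{prop:lin} together with the quadratic bound $\|\mb N(\Phi(\sigma))\|\lesssim \|\Phi(\sigma)\|^2$ from~\eqref{eq:LipN}. Your treatment of the backward-in-time bound $\|\mb S(t)\mb P\|\lesssim 1$ for $t\leq 0$ (via semisimplicity of the eigenvalues $0,1$) is in fact slightly cleaner than the paper's use of the bound $e^{\tau-\sigma}$, but the arguments are otherwise identical.
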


\begin{proof}
First observe that $\mb K_{\mb u}: \mc X_\delta \to C([0,\infty),\mc H)$ since
$\|\mb N(\Phi(\tau))\|\lesssim e^{(-1+2\epsilon)\tau}$ for any $\Phi \in \mc X_\delta$.
We have
\begin{align}
\mb P \mb K_{\mb u}(\Phi)(\tau)&=-\int_\tau^\infty \mb S(\tau-\sigma)\mb P \mb N(\Phi(\sigma))d\sigma
\end{align}
which yields
\begin{align*} 
\|\mb P [\mb K_{\mb u}(\Phi)(\tau)-\mb K_{\mb u}(\Psi)(\tau)]\|&\lesssim
\int_\tau^\infty e^{\tau-\sigma}\|\Phi(\sigma)-\Psi(\sigma)\|(\|\Phi(\sigma)\|+\|\Psi(\sigma)\|)d\sigma \\
&\lesssim \|\Phi-\Psi\|_{\mc X}(\|\Phi\|_{\mc X}+\|\Psi\|_{\mc X})e^\tau 
\int_\tau^\infty e^{(-2+2\epsilon)\sigma}d\sigma \\
&\lesssim \delta e^{(-1+2\epsilon)\tau}\|\Phi-\Psi\|_{\mc X}
\end{align*}
for all $\Phi,\Psi \in \mc X_\delta$ by Proposition \ref{prop:lin}.
On the stable subspace we have
\[ (1-\mb P)\mb K_{\mb u}(\Phi)(\tau)=\mb S(\tau)(1-\mb P)\mb u+\int_0^\tau
\mb S(\tau-\sigma)(1-\mb P)\mb N(\Phi(\sigma))d\sigma \]
and thus,
\begin{align*} 
\|(1-\mb P)[\mb K_{\mb u}(\Phi)(\tau)-\mb K_{\mb u}(\Psi)(\tau)]\|&\lesssim
\int_0^\tau e^{(-\frac12+\epsilon)(\tau-\sigma)}\|\Phi(\sigma)-\Psi(\sigma)\| \\
&\quad \times (\|\Phi(\sigma)\|+\|\Psi(\sigma)\|)d\sigma \\
&\lesssim \|\Phi-\Psi\|_{\mc X}\delta e^{(-\frac12+\epsilon)\tau}\
\int_0^\tau e^{(-\frac12+\epsilon)\sigma}d\sigma \\
&\lesssim \delta e^{(-\frac12+\epsilon)\tau}\|\Phi-\Psi\|_{\mc X}
\end{align*}
again by Proposition \ref{prop:lin}.
We conclude that
\[ \|\mb K_{\mb u}(\Phi)-\mb K_{\mb u}(\Psi)\|_{\mc X}\lesssim \delta \|\Phi-\Psi\|_{\mc X} \]
for all $\Phi, \Psi \in \mc X$.
By a slight modification of the above argument one similarly 
proves $\|\mb K_{\mb u}(\Phi)\|_{\mc X}\leq \delta$ 
for all $\Phi \in \mc X_\delta$ (here $\|\mb u\|\leq \delta^2$
is used).
\end{proof}

Now we can conclude the existence of a solution to Eq.~\eqref{eq:cssopweakmod} by invoking
the contraction mapping principle.

\begin{lemma}
\label{lem:solmod}
Let $\delta>0$ be sufficiently small. Then, for any $\mb u\in \mc H$ with $\|\mb u\|\leq \delta^2$,
there exists a unique solution $\Phi_\mb u \in \mc X_\delta$
to Eq.~\eqref{eq:cssopweakmod}. 
\end{lemma}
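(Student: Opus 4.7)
The plan is to apply the Banach fixed point theorem directly, since Proposition \ref{prop:K} has already done the substantive work. First I would observe that $\mc X_\delta$ is a closed subset of the Banach space $\mc X$, and hence a complete metric space when equipped with the induced metric $d(\Phi,\Psi) = \|\Phi-\Psi\|_{\mc X}$. The completeness of $\mc X$ itself follows from the fact that $(C([0,\infty),\mc H), \|\cdot\|_{\mc X})$ is a weighted sup-norm space of continuous Banach-space-valued functions.

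Next, given $\mb u \in \mc H$ with $\|\mb u\| \leq \delta^2$, Proposition \ref{prop:K} provides the two key properties: $\mb K_{\mb u}(\mc X_\delta) \subset \mc X_\delta$ and
\[ \|\mb K_{\mb u}(\Phi)-\mb K_{\mb u}(\Psi)\|_{\mc X} \leq C\delta \|\Phi-\Psi\|_{\mc X} \]
for all $\Phi, \Psi \in \mc X_\delta$. Choosing $\delta > 0$ small enough that both the smallness hypothesis of Proposition \ref{prop:K} is satisfied and $C\delta < 1$ holds, the map $\mb K_{\mb u}$ is a strict contraction on the complete metric space $\mc X_\delta$.

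The contraction mapping principle then yields the existence of a unique fixed point $\Phi_{\mb u} \in \mc X_\delta$, i.e., $\Phi_{\mb u} = \mb K_{\mb u}(\Phi_{\mb u})$, which is precisely Eq.~\eqref{eq:cssopweakmod}. Uniqueness within $\mc X_\delta$ is automatic from the contraction property.

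There is essentially no obstacle here: Proposition \ref{prop:K} packages the analytic content (semigroup bounds from Proposition \ref{prop:lin}, the Lipschitz estimate \eqref{eq:LipN} for $\mb N$, and the integration against the exponential weights in the definition of $\mc X$), so the proof of Lemma \ref{lem:solmod} is purely a soft application of Banach's theorem. The only minor point worth being explicit about is that $\delta$ must be chosen to simultaneously satisfy (i) the smallness required in Proposition \ref{prop:K}, and (ii) $C\delta < 1$ where $C$ is the contraction constant furnished by that proposition; both conditions are compatible and determine the admissible range of $\delta$.
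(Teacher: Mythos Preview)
Your proposal is correct and matches the paper's own proof essentially verbatim: both invoke Proposition~\ref{prop:K} to make $\mb K_{\mb u}$ a strict contraction on $\mc X_\delta$ for $\delta$ small, then apply the contraction mapping principle. The paper simply writes the contraction constant as $\tfrac12$ rather than $C\delta<1$, but this is the same argument.
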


\begin{proof}
By Proposition \ref{prop:K} we may choose $\delta>0$ so small that
\[ \|\mb K_\mb u(\Phi)-\mb K_{\mb u}(\Psi)\|_{\mc X}\leq \tfrac12 \|\Phi-\Psi\|_{\mc X} \]
for all $\Phi,\Psi \in \mc X_\delta$
and thus, the contraction mapping principle implies the existence
of a unique $\Phi_{\mb u} \in \mc X_\delta$ with $\Phi_{\mb u}=\mb K_{\mb u}(\Phi_{\mb u})$. 
By the definition of
$\mb K_{\mb u}$, $\Phi_{\mb u}$ is a solution
to Eq.~\eqref{eq:cssopweakmod}.
\end{proof}

\subsection{Solution of Eq.~\eqref{eq:cssopweak}}
Recall that $\rg \mb P$ is spanned by eigenfunctions of $\mb L$ with eigenvalues $0$ and $1$,
see Lemma \ref{lem:dimP}.
As in the proof of Lemma \ref{lem:dimP} we write $\mb P=\mb P_0+\mb P_1$
where $\mb P_n$, $n\in \{0,1\}$,
 projects to the geometric eigenspace of $\mb L$ associated to the eigenvalue 
$n\in \sigma_p(\mb L)$.
Consequently, we infer $\mb S(\tau)\mb P_n=e^{n\tau}\mb P_n$.
This shows that the ``bad'' term we subtracted from Eq.~\eqref{eq:cssopweak} may be
written as
\begin{align*} \mb S(\tau)\mb P \mb u
+\int_0^\infty \mb S(\tau-\sigma)\mb P\mb N(\Phi_{\mb u}(\sigma))d\sigma
=\mb S(\tau)[\mb P\mb u-\mb F(\mb u)]
\end{align*}
where $\mb F$ is given by
\[ \mb F(\mb u):=-\mb P_0\int_0^\infty \mb N(\Phi_{\mb u}(\sigma))d\sigma 
-\mb P_1 \int_0^\infty e^{-\sigma}\mb N(\Phi_{\mb u}(\sigma))d\sigma. \]
According to Lemma \ref{lem:solmod}, the function $\mb F$ is well-defined on 
$\mc B_{\delta^2}:=\{\mb u\in \mc H: \|\mb u\|< \delta^2\}$
with values in $\rg \mb P$
and this shows that we have effectively modified the \emph{initial data} by adding an element
of the 4-dimensional subspace $\rg \mb P$ of $\mc H$.
Note, however, that the modification depends on the solution itself.
Consequently, if the initial data for Eq.~\eqref{eq:cssopweak} are of the form
$\mb u+\mb F(\mb u)$ for $\mb u\in \ker \mb P$, Eq.~\eqref{eq:cssopweak} and \eqref{eq:cssopweakmod}
are equivalent and Lemma \ref{lem:solmod} yields the desired solution of Eq.~\eqref{eq:cssopweak}.
We also remark that $\mb F(\mb 0)=\mb 0$.
The following result implies that the graph 
\[ \{\mb u+\mb F(\mb u): \mb u \in \ker \mb P, \|\mb u\|<\delta^2\} \subset \ker \mb P
\oplus \rg \mb P= \mc H \]
defines a Lipschitz manifold of co-dimension $4$.

\begin{lemma}
\label{lem:Lip}
Let $\delta>0$ be sufficiently small. Then the function $\mb F: \mc B_{\delta^2}
\to \rg \mb P\subset \mc H$ satisfies
\[ \|\mb F(\mb u)-\mb F(\mb v)\|\leq C\delta \|\mb u-\mb v\|. \]
\end{lemma}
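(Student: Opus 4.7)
The plan is to reduce the Lipschitz estimate for $\mb F$ to a Lipschitz estimate for the map $\mb u \mapsto \Phi_{\mb u}$ from $\mc B_{\delta^2}$ into $\mc X_\delta$, and then to exploit the quadratic Lipschitz bound \eqref{eq:LipN} on $\mb N$ together with the exponential decay built into the norm of $\mc X$.

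First I would establish that the fixed point $\Phi_{\mb u}$ depends Lipschitz-continuously on $\mb u$, with
\[
\|\Phi_{\mb u}-\Phi_{\mb v}\|_{\mc X}\lesssim \|\mb u-\mb v\|.
\]
Since $\Phi_{\mb u}=\mb K_{\mb u}(\Phi_{\mb u})$ and $\Phi_{\mb v}=\mb K_{\mb v}(\Phi_{\mb v})$, I write
\[
\Phi_{\mb u}-\Phi_{\mb v}=\mb K_{\mb u}(\Phi_{\mb u})-\mb K_{\mb u}(\Phi_{\mb v})+\mb K_{\mb u}(\Phi_{\mb v})-\mb K_{\mb v}(\Phi_{\mb v}).
\]
The second difference is just $\mb S(\tau)(1-\mb P)(\mb u-\mb v)$, which by Proposition \ref{prop:lin} has $\mc X$-norm $\lesssim \|\mb u-\mb v\|$. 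The first difference is controlled by the contraction estimate in Proposition \ref{prop:K} (with constant $\leq\tfrac12$ after possibly shrinking $\delta$), yielding $\tfrac12\|\Phi_{\mb u}-\Phi_{\mb v}\|_{\mc X}$ that can be absorbed into the left-hand side.

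With this in hand I attack $\mb F$ directly. Subtracting the two integral representations,
\[
\mb F(\mb u)-\mb F(\mb v)=-\mb P_0\!\int_0^\infty\!\bigl[\mb N(\Phi_{\mb u}(\sigma))-\mb N(\Phi_{\mb v}(\sigma))\bigr]d\sigma-\mb P_1\!\int_0^\infty\! e^{-\sigma}\bigl[\mb N(\Phi_{\mb u}(\sigma))-\mb N(\Phi_{\mb v}(\sigma))\bigr]d\sigma.
\]
Both $\mb P_0$ and $\mb P_1$ are bounded on $\mc H$. Applying \eqref{eq:LipN} together with $\|\Phi_{\mb u}(\sigma)\|+\|\Phi_{\mb v}(\sigma)\|\lesssim \delta e^{-(\frac12-\epsilon)\sigma}$ and $\|\Phi_{\mb u}(\sigma)-\Phi_{\mb v}(\sigma)\|\leq e^{-(\frac12-\epsilon)\sigma}\|\Phi_{\mb u}-\Phi_{\mb v}\|_{\mc X}$ gives the pointwise bound
\[
\|\mb N(\Phi_{\mb u}(\sigma))-\mb N(\Phi_{\mb v}(\sigma))\|\lesssim \delta\, e^{-(1-2\epsilon)\sigma}\|\Phi_{\mb u}-\Phi_{\mb v}\|_{\mc X}.
\]
Choosing $\epsilon\in(0,\tfrac12)$, the $\sigma$-integrals converge and each is $\lesssim 1$, so
\[
\|\mb F(\mb u)-\mb F(\mb v)\|\lesssim \delta\,\|\Phi_{\mb u}-\Phi_{\mb v}\|_{\mc X}\lesssim \delta\,\|\mb u-\mb v\|,
\]
which is the claim.

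I do not expect any real obstacle here; the entire argument is a routine consequence of the contraction set up in Proposition \ref{prop:K} plus the quadratic structure of $\mb N$. The only point requiring minor care is to make sure that $\delta$ is chosen small enough so that the fixed point map is actually a strict contraction (so the Lipschitz estimate for $\mb u\mapsto \Phi_{\mb u}$ goes through), which is already ensured by the hypothesis and Proposition \ref{prop:K}.
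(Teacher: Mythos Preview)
Your proposal is correct and follows essentially the same approach as the paper: first establish Lipschitz dependence of $\Phi_{\mb u}$ on $\mb u$ via the contraction estimate of Proposition~\ref{prop:K} and the explicit difference $\mb K_{\mb u}(\Phi_{\mb v})-\mb K_{\mb v}(\Phi_{\mb v})=\mb S(\tau)(1-\mb P)(\mb u-\mb v)$, then feed this into the quadratic Lipschitz bound \eqref{eq:LipN} on $\mb N$. The paper's proof is in fact terser in the final step, simply citing \eqref{eq:LipN} without writing out the $\sigma$-integrals, but the argument is identical.
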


\begin{proof}
First, we claim that $\mb u\mapsto \Phi_\mb u: \mc B_{\delta^2}\to \mc X_\delta
\subset \mc X$ is Lipschitz-continuous.
Indeed, since $\Phi_\mb u=\mb K_{\mb u}(\Phi_{\mb u})$ we infer
\begin{align*} \|\Phi_{\mb u}-\Phi_{\mb v}\|_{\mc X}&\leq 
\|\mb K_{\mb u}(\Phi_{\mb u})-\mb K_{\mb u}(\Phi_{\mb v})\|_{\mc X}
+ \|\mb K_{\mb u}(\Phi_{\mb v})-\mb K_{\mb v}(\Phi_{\mb v})\|_{\mc X} \\
&\lesssim \delta \|\Phi_{\mb u}-\Phi_{\mb v}\|_{\mc X}+\|\mb u-\mb v\|
\end{align*}
by Proposition \ref{prop:K} and the fact that
\[ \|\mb K_{\mb u}(\Phi_{\mb v})(\tau)-\mb K_{\mb v}(\Phi_{\mb v})(\tau)\|
=\|\mb S(\tau)(1-\mb P)(\mb u-\mb v)\|\lesssim e^{(-\frac12+\epsilon)\tau}\|\mb u-\mb v\|. \]
The claim now follows from $\|\mb N(\mb u)-\mb N(\mb v)\|\lesssim \|\mb u-\mb v\|(\|\mb u\|
+\|\mb v\|)$.
\end{proof}

We summarize our results in a theorem.

\begin{theorem}
\label{thm:main2}
Let $\delta>0$ be sufficiently small.
There exists a co-dimension 4 Lipschitz manifold $\mc M\subset \mc H$ with $\mb 0\in \mc M$
such that for
any $\mb u\in \mc M$, Eq.~\eqref{eq:cssopweak} has a solution $\Phi \in \mc X_\delta$.
Moreover, $\Phi$ is unique in $C([0,\infty),\mc H)$.
If, in addition, $\mb u\in \mc D(\mb L)$ then $\Phi \in C^1([0,\infty),\mc H)$ and $\Phi$
solves Eq.~\eqref{eq:cssop} with $\Phi(0)=\mb u$.
\end{theorem}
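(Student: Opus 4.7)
The plan is to assemble the modified-equation machinery of Lemmas \ref{lem:solmod} and \ref{lem:Lip} together with the identity for the ``bad'' term derived just before Lemma \ref{lem:Lip}, and then to upgrade to classical solutions via standard semigroup regularity theory.

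First I would define
\[ \mc M := \{\mb u+\mb F(\mb u) : \mb u\in \ker \mb P,\, \|\mb u\|<\delta^2\} \]
and verify the geometric claims. The zero function is a fixed point of $\mb K_{\mb 0}$ in $\mc X_\delta$, so uniqueness in Lemma \ref{lem:solmod} forces $\Phi_{\mb 0}=0$ and hence $\mb F(\mb 0)=\mb 0$, which gives $\mb 0\in \mc M$. Lemma \ref{lem:Lip} shows that $\mb F:\ker \mb P\cap \mc B_{\delta^2}\to \rg\mb P$ is Lipschitz with constant $C\delta$, while Lemma \ref{lem:dimP} gives $\dim \rg\mb P=4$, so $\mc M$ is genuinely a co-dimension $4$ Lipschitz submanifold through $\mb 0$ of $\mc H=\ker \mb P\oplus \rg\mb P$.

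Next I would fix $\mb w=\mb u+\mb F(\mb u)\in \mc M$ and use Lemma \ref{lem:solmod} to produce $\Phi:=\Phi_{\mb u}\in \mc X_\delta$ solving the modified equation \eqref{eq:cssopweakmod}. Since \eqref{eq:cssopweakmod} only uses $(1-\mb P)\mb w=\mb u$ as the linear-flow datum, $\Phi$ is unchanged if we regard $\mb w$ rather than $\mb u$ as the seed. The key point is to verify that this $\Phi$ satisfies the original Duhamel equation \eqref{eq:cssopweak} with initial datum $\mb w$, which reduces to checking
\[ \mb S(\tau)\mb P\mb w+\int_0^\infty \mb S(\tau-\sigma)\mb P\mb N(\Phi(\sigma))\,d\sigma = \mb 0. \]
Upon decomposing $\mb P=\mb P_0+\mb P_1$ and using that $\mb S$ extends to a group on the finite-dimensional invariant subspaces $\rg\mb P_n$ with $\mb S(\tau)\mb P_n=e^{n\tau}\mb P_n$, this identity is precisely the definition of $\mb F(\mb u)=\mb P\mb w$. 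In particular $\Phi(0)=\mb w$.

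For uniqueness in $C([0,\infty),\mc H)$ I would argue locally: two continuous solutions with the same initial datum of norm $<\delta^2$ remain bounded in the unit ball of $\mc H$ on a small time interval $[0,T]$ by continuity at $0$, so the local Lipschitz bound \eqref{eq:LipN} combined with the semigroup estimate of Lemma \ref{lem:lin} yields agreement on $[0,T]$ via Gronwall, and a standard continuation argument then extends this to $[0,\infty)$. Finally, for the classical regularity under $\mb w\in \mc D(\mb L)$ I would invoke the standard semigroup regularity theorem (e.g.\ Pazy, Ch.~6): since $\mb N$ is polynomial and hence locally $C^1$ from $\mc H$ to itself, the mild solution with data in $\mc D(\mb L)$ is automatically strongly differentiable and satisfies \eqref{eq:cssop} pointwise on its maximal interval of existence, and the a priori bound $\Phi\in \mc X_\delta$ forces this interval to be $[0,\infty)$.

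The main obstacle, and the only conceptually nontrivial step, is the matching of initial data in the second paragraph: one must verify that the function $\mb F$, built through the \emph{modified} equation, precisely cancels the ``bad'' term in the original Duhamel formulation along the trajectory so that the two equations actually coincide for data on $\mc M$. Once this bookkeeping is settled, the remainder is a collection of standard consequences of the linear framework developed in Section \ref{sec:lin}.
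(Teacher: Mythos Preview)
Your proposal is correct and follows essentially the same approach as the paper: the paper's own proof is a two-line remark deferring to the preceding discussion (the identity $\mb S(\tau)\mb P\mb u+\int_0^\infty\mb S(\tau-\sigma)\mb P\mb N(\Phi_{\mb u}(\sigma))\,d\sigma=\mb S(\tau)[\mb P\mb u-\mb F(\mb u)]$ before Lemma~\ref{lem:Lip}, together with Lemmas~\ref{lem:solmod} and~\ref{lem:Lip}), and then invokes ``standard results of semigroup theory'' for the $C^1$-regularity and calls uniqueness ``a simple exercise.'' Your writeup simply makes all of these steps explicit, including the cancellation $\mb P\mb w=\mb F(\mb u)$ that matches the modified and original Duhamel formulations, the Gronwall-based uniqueness, and the Pazy-type regularity statement.
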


\begin{proof}
The last statement follows from standard results of semigroup theory and uniqueness
in $C([0,\infty),\mc H)$ is a simple exercise. 
\end{proof}

\subsection{Proof of Theorem \ref{thm:main}}
Theorem \ref{thm:main} is now a consequence of 
Theorem \ref{thm:main2}: Eq.~\eqref{eq:vphi} implies
\begin{equation}
\label{eq:vphi2} \frac{v\circ \Phi_T(X)-v_0\circ \Phi_T(X)}{T^2-|X|^2}=\tfrac{1}{(-T)}\phi(-\log(-T),
\tfrac{X}{(-T)}) 
\end{equation}
and thus,
\begin{align*} |T|^{-1}\|v-v_0\|_{L^2(\Sigma_T)}&=|T|^{-2}\left \|
\phi_1\left (-\log(-T),\tfrac{\cdot}{|T|}\right ) \right \|_{L^2(B_{|T|})} \\
&=|T|^{-\frac12} \|\phi_1(-\log(-T),\cdot)\|_{L^2(B)} \\
&\lesssim |T|^{-\epsilon}.
\end{align*}
Similarly, we obtain
\[ \partial_{X^j}\frac{v\circ \Phi_T(X)-v_0\circ \Phi_T(X)}{T^2-|X|^2}=
\tfrac{1}{T^2}\partial_j \phi_1(-\log(-T),
\tfrac{X}{(-T)}) \] 
which yields
\[ \|v-v_0\|_{\dot H^1(\Sigma_T)}=T^{-2}\left \|
\nabla \phi_1\left (-\log(-T),\tfrac{\cdot}{|T|}\right ) \right \|_{L^2(B_{|T|})}
\lesssim |T|^{-\epsilon}. \]
For the time derivative we infer
\begin{align*} \partial_T\frac{v\circ \Phi_T(X)-v_0\circ \Phi_T(X)}{T^2-|X|^2}&=\tfrac{1}{T^2}
\left (\partial_0 \phi+\tfrac{X^j}{(-T)}\partial_j \phi+\phi \right )
\left (-\log(-T),\tfrac{X}{(-T)}\right ) \\
&=\tfrac{1}{T^2}\phi_2 \left (-\log(-T),\tfrac{X}{(-T)}\right )
\end{align*}
and hence,
\[ \|\nabla_n v-\nabla_n v_0\|_{L^2(\Sigma_T)}=T^{-2} 
\left \|\phi_2 \left (-\log(-T),\tfrac{\cdot}{|T|} \right ) \right \|_{L^2(B_{|T|})}
\lesssim |T|^{-\epsilon}. \]

Finally, we turn to the Strichartz estimate.
First, note that the 
modulus of the determinant of the Jacobian of $(T,X) \mapsto (t,x)$
is $(T^2-|X|^2)^{-4}$.
This is easily seen by considering the transformation
\[ X^\mu \mapsto y^\mu=-\frac{X^\mu}{X_\sigma X^\sigma} \]
which has the same Jacobian determinant (up to a sign) since
$t=-y^0$ and $x^j=y^j$.
We obtain
\[ \partial_\nu y^\mu=-\frac{X_\sigma X^\sigma \delta_\nu{}^\mu-2X_\nu X^\mu}
{(X_\sigma X^\sigma)^2} \]
and hence,
\[ \partial_\nu y^\mu \partial_\mu y^\lambda=\frac{\delta_\nu{}^\lambda}{(X_\sigma X^\sigma)^2} \]
which yields $|\det(\partial_\nu y^\mu)|=(X_\sigma X^\sigma)^{-4}=(T^2-|X|^2)^{-4}$.
Furthermore, note that $s\in [t,2t]$ and $x\in B_{(1-\delta)t}$ imply
\begin{align*} 
S:&=-\frac{s}{s^2-|x|^2}\geq -\frac{t}{t^2-|x|^2}\geq -\frac{c_\delta}{t} \\
S&\leq -\frac{2t}{4t^2-|x|^2}\leq -\frac{1}{2t}.
\end{align*}
Consequently, by Eq.~\eqref{eq:vphi2} and Sobolev embedding we infer
\begin{align*}
\|v-v_0\|_{L^4(t,2t)L^4(B_{(1-\delta)t})}^4&\leq \int_{-\frac{c_\delta}{t}}^{-\frac{1}{2t}}
\int_{B_{(1-\delta)|S|}}\left |
\frac{v\circ \Phi_S(X)
-v_0 \circ \Phi_S(X)}{S^2-|X|^2} \right |^4 dX dS \\
&\lesssim \int_{-\frac{c_\delta}{t}}^{-\frac{1}{2t}}|S|^{-4} \|\phi(-\log(-S),\tfrac{\cdot}{|S|})
\|_{L^4(B_{|S|})}^4 dS \\
&= \int_{-\frac{c_\delta}{t}}^{-\frac{1}{2t}}|S|^{-1} \|\phi(-\log(-S),\cdot)
\|_{L^4(B)}^4 dS \\
&\lesssim  \int_{-\frac{c_\delta}{t}}^{-\frac{1}{2t}}|S|^{-1} \|\phi(-\log(-S),\cdot)
\|_{H^1(B)}^4 dS \\
&\lesssim \int_{-\frac{c_\delta}{t}}^{-\frac{1}{2t}} |S|^{1-4\epsilon}dS \simeq t^{-2+4\epsilon}
\end{align*}
as claimed.

\bibliography{cubic_nosym}
\bibliographystyle{plain}

\end{document}